\theoremstyle{plain}
\newtheorem{thm}{Theorem}[section]
\newtheorem{prop}[thm]{Proposition}
\newtheorem{lem}[thm]{Lemma}
\theoremstyle{definition}
\newtheorem{exa}[thm]{Example}
\newtheorem{rem}[thm]{Remark}
\newtheorem{defn}[thm]{Definition}
\newtheorem{prob}[thm]{Problem}
\def\dim{\mathop{\mathrm{dim}}\nolimits}
\def\det{\mathop{\mathrm{det}}\nolimits}
\def\Im{\mathop{\mathrm{Im}}\nolimits}
\def\Qer{\mathop{\mathrm{Ker}}\nolimits}
\def\Coker{\mathop{\mathrm{Coker}}\nolimits}
\def\Hom{\mathop{\mathrm{Hom}}\nolimits}
\newcommand{\lra}{\longrightarrow}
\newcommand{\ra}{\rightarrow}
\newcommand{\N}{{\Bbb N}}
\newcommand{\Q}{{\Bbb Q}}
\newcommand{\Z}{{\Bbb Z}}
\newcommand{\Aut}{{\rm Aut}}
\newcommand{\Out}{{\rm Out}}
\newcommand{\Sp}{{\rm Sp}}
\newcommand{\GL}{{\rm GL}}
\newcommand{\SpAut}{{\rm SpAut}}
\newcommand{\SpOut}{{\rm SpOut}}
\newcommand{\lc}{{\mathrm{l}\textrm{-}\mathrm{coef}}}
\newcommand\bigzerou{\smash{\lower.3ex\hbox{\Huge 0}}}
\newcommand{\id}{{\mathrm{id}}}
\begin{document}
\large
\begin{center}
{\bf\Large Meta-nilpotent knot invariants and}
\end{center}
\begin{center}
{\bf\Large symplectic automorphism groups of free nilpotent groups}
\end{center}
\begin{center}{Takefumi Nosaka\footnote{
E-mail address: {\tt nosaka@math.titech.ac.jp}
}}\end{center}
\begin{abstract}\baselineskip=12pt \noindent
We develop the nilpotent $p$-localization of knot groups in terms of the (symplectic) automorphism groups of free nilpotent groups.
We show that any map from the set of conjugacy classes of the outer automorphism groups defines a knot invariant.
Furthermore, we investigate these automorphism groups and compute the resulting knot invariants.
\end{abstract}
\begin{center}
\normalsize
\baselineskip=11pt
{\bf Keywords} \\
\ \ \ Meta-nilpotent quotient, knot, symplectic representation, mapping class group, Milnor pairing \ \ \
\end{center}
\begin{center}
\normalsize
\baselineskip=11pt
{\bf Subject Codes } \\
\ \ \ 57K10,57K18,20J06, 20F18,20F28 \ \ \
\end{center}

\large
\baselineskip=16pt
\section{Introduction}
In this paper, we study the meta-nilpotence of a knot $K$ in an integral homology 3-sphere\footnote{{\it An integral homology 3-sphere} is a closed 3-manifold having integral homology of the 3-sphere. } $M$ and suggest an approach to such knots regarding certain automorphism groups.

First, we will establish the terminology regarding nilpotent groups. For a group $G$ and subgroups $K,H \subset G$, let $[K,H]$ be the commutator subgroup generated by elements $hkh^{-1}k^{-1}$ with $h \in H, k\in K$. Define $G_1$ to be $[G,G]$, and inductively $G_{k+1} $ to be $[G,G_k]$. Namely, $G \supset G_1 \supset G_2 \supset \cdots $ is a lower central series. For any prime $p$ (possibly $p=0$), we can define the localization of the nilpotent quotient $G/G_k$ and denote it by $G/G_k\otimes \Z_{(p)} $ {where $\Z_{(p)}$ is the localization of $\Z$ at $p$, that is, $\Z_{(p)}=\{ r/s \in \Q \mid r,s \in \Z,\ p \textrm{ does not divide }s \}$; see Section \ref{Ss45} for details. If the abelianization of $G$ is $\Z$, we have a semi-direct product $[G,G] \rtimes \Z$; the restricted action of $\Z$ on $[G,G]_k$ ensures a semi-direct product $ [G,G]/[G,G]_k \rtimes \Z$, which we call {\it the ($k$-th) meta-nilpotent quotient (of $G$).}

In Section \ref{Ss45}, we develop a nilpotent approach to studying knots in the spirit of fibered spaces with a surface fiber. The correspondence between fibered knots and the associated monodromy is a complete invariant of fibered knots; however, some knots are fibered, but others are not (we discuss some properties of fiberedness in Section \ref{Ss3}). As a generalization suitable for every knot, we will show (Theorem \ref{bb1279}) that, for an appropriate prime $p$, the localization of the meta-nilpotent quotient of any knot group $\pi_K:= \pi_1(M \setminus K) $ is isomorphic to that of a free group $F$, that is,
$$ ([\pi_K,\pi_K]/([\pi_K,\pi_K])_k \otimes \Z_{(p)}) \rtimes \Z \cong (F/F_k \otimes \Z_{(p)}) \rtimes \Z,$$
where the rank of $F$ is equal to the degree of the Alexander polynomial $\Delta_K$.
If $p=0$, the isomorphism is mentioned and studied from the viewpoint of ``Fox pairings"; see \cite{Tur2}

Moreover, we completely classify the choices of a semi-direct product, up to conjugacy, as follows. 
By considering the projection of outer automorphism groups,
$$q_k: \Out(F/F_{k } \otimes \Z_{(p)}) \ra \Out(F/F_{1} \otimes \Z_{(p)}) =\GL(\mathrm{rank}F; \Z_{(p)}),$$
we take the subgroup, $ \SpOut(F/F_{k } \otimes \Z_{(p)}) $, of the preimage of the symplectic group $\Sp(\mathrm{rank}F; \Z_{(p)})$. Then, we show (Theorem \ref{bb1279}) that, when the action of $\Z$ in the semi-direct product is regarded as an automorphism in $\Out(F/F_{k } \otimes \Z_{(p)}) $, it lies in $ \SpOut(F/F_{k } \otimes \Z_{(p)})$ by Milnor duality \cite{Mil}; Theorem \ref{ii98788} shows that the correspondence between a knot and the monodromy of a semi-direct product defines a map
\begin{equation}\label{5529}
\{ \textrm{a knot }K \textrm{ in an integral homology 3-sphere with deg}\Delta_K=\mathrm{rank}F \} \lra \frac{\SpOut(F/F_{k } \otimes \Z_{(p)})}{ \mathrm{conjugation}}.
\end{equation}
In Section \ref{93553}, we will see that the restriction on the set of fibered knots is {\it a priori} equal to the Dehn-Nielsen embedding. In other words, the construction of such monodromies is an extension of the correspondence between a fibered knot and the monodromy.
Section \ref{93553} also discusses the image and surjectivity of the map \eqref{5529}, as a comparison with the Dehn-Nielsen embedding and homology cobordism;
see Propositions \ref{llk1} and \ref{llk1456}. 


Meanwhile, the map \eqref{5529} implies that any map from the conjugacy classes of $\SpOut(F/F_{k } \otimes \Z_{(p)}) $ is a knot invariant. To obtain such maps, we invoke the works on the Johnson homomorphisms of the automorphism group $\Aut(F/F_{k}) $ and of the mapping class group, $\mathcal{M}_{g,1}$, of an orientable surface with one boundary. Roughly speaking, the Johnson homomorphism is a nilpotent study of $\Aut(F/F_{k})$ and of some homomorphism $\rho_k: \mathcal{M}_{g,1} \ra \Aut(F/F_{k})$, together with the help of $\Sp$-representations. Inspired by the works of Morita \cite{Mo,Mo2,Mo3}, in Section \ref{9914} we investigate the group $\SpOut(F/F_{k} \otimes \Z_{(p)})$ in detail, while in Section \ref{99sec} we explicitly describe its structure for $k=2,3$ in terms of $\Sp$-representations. As a corollary, we define certain maps from the conjugacy classes of $\SpOut(F/F_{k} \otimes \Z_{(p)})$ for $k=1,2,3$; see Section \ref{Seinv}.

This paper is organized as follows. Section 2 reviews several properties of fibered knots. Sections 3 and 4 deal with the nilpotent localization of knot groups and establish uniqueness up to conjugacy. Section 5 compares our results with the Dehn-Nielsen embedding. Sections 6 and 7 examine the automorphism groups $\Aut(F/F_k \otimes \Z_{(p)})$ and $\Out(F/F_k \otimes \Z_{(p)})$. Finally, Section \ref{Seinv} proposes knot invariants derived from the nilpotent localization.

\

\noindent
{\bf Conventional notation.} \ Every knot $K$ is understood to be smooth, oriented, and embedded in a rational homology 3-sphere $M$ as a circle, where $K$ is null-homologous in $H_1(M;\Z)$. We denote by $\pi_K$ the knot group $\pi_1(M \setminus K )$, and by $\Delta_K \in \Q[t^{\pm 1 }]$ the Alexander polynomial of $K$; see \cite{Lic} for the definition. By $p$ we mean a prime number (possibly $p=0$). For a polynomial $f(t)$, let $\lc f(t)$ be the leading coefficient.


Given groups $G$ and $H$ and a homomorphism $\phi: G \to \Aut(H)$, we can define the semidirect product $H \rtimes G$. When it is necessary to emphasize the role of $\phi$, we denote this by $H \rtimes_{\phi} G$. If $G = \Z$, such an $H \rtimes G$ is sometimes called a {\it mapping torus}.

Moreover, if there exists a surjective homomorphism $p: \Gamma \to \Z$, we can choose a homomorphism $\mathfrak{s}: \Z \to \Gamma$ as a section, which induces a group isomorphism
$\Gamma \cong \mathrm{Ker}(p) \rtimes_{\phi} \Z$, 
where $\phi(m)(h) =  \mathfrak{s} (m) h (\mathfrak{s} (m) )^{-1}$ for $m \in \Z, h \in \mathrm{Ker}(p)$. For example, for any knot $K \subset S^3$, the knot group $\Gamma= \pi_K$ is a semidirect product $[\pi_K,\pi_K] \rtimes \Z$.

\section{Fibered knots and monic Alexander polynomials}
\label{Ss3}
We will mainly focus on localized meta-nilpotent quotients of knot groups. Before discussing the quotient, we review fibered knots as a toy model. A knot $K \subset M$ is said to be {\it fibered} if $M \setminus K$ is the total space of a fiber bundle over $S^1$ whose fiber is a Seifert surface. It is worth noting that if $K$ is fibered, then $\pi_1(M \setminus K) \cong F \rtimes \Z$ for some free group $F$; in particular, the $k$-th meta-nilpotent quotient of $\pi_1(M \setminus K)$ is isomorphic to $F/F_k \rtimes \Z$.

Conversely, as is classically known, if $K$ is not fibered, no free group $F$ admits $\pi_1(M \setminus K) \cong F \rtimes \Z$. However, as indicated in \cite[\S 5]{Gor}, we can show that, under a certain condition, the meta-nilpotent quotient of $\pi_1(M \setminus K)$ is a mapping torus: more precisely,

\begin{prop}[{folklore}]\label{bb12}
Let $M$ be an integral homology 3-sphere. Let $\pi_K $ be the knot group $\pi_1(M \setminus K) $ of a knot $K \subset M $. Suppose that the leading coefficient of $\Delta_K$ is $\pm 1$, i.e., $\lc \Delta_K = \pm 1. $

Then, there exist a free group $F$ of rank $\mathrm{deg} \Delta_K $ and an isomorphism $\tau : F \ra F$ such that the $k$-th meta-nilpotent quotient of $\pi_1(M \setminus K)$ is isomorphic to $F/F_k \rtimes_{\tau} \Z$ for any $k \in \N$.
\end{prop}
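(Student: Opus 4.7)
The plan is to apply Stallings' classical nilpotent isomorphism theorem: a homomorphism $\phi: H \to G$ that is an isomorphism on $H_1(-;\Z)$ and a surjection on $H_2(-;\Z)$ induces isomorphisms $H/H_k \to G/G_k$ for every $k$. I apply this with $G = [\pi_K, \pi_K]$ and $H = F$, a free group of rank $n = \deg \Delta_K$, and then upgrade the resulting isomorphisms of nilpotent quotients to an isomorphism of the semi-direct products.

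The key homological inputs are the following. (a) The Alexander module $A = H_1([\pi_K, \pi_K]; \Z) \cong \Coker(tV - V^T)$ (with $V$ a Seifert matrix) is free abelian of rank $n$ under the hypothesis $\det V = \lc \Delta_K = \pm 1$: in this case $V \in \GL(n;\Z)$, so $A \cong \Coker(tI - V^{-1}V^T)$ is a free abelian group of rank $n$, with the deck transformation $t$ acting by the integer matrix $V^{-1}V^T$. (b) $H_2([\pi_K, \pi_K]; \Z) = 0$. To see (b), use the Wang sequence of the infinite cyclic cover $\tilde X \to M \setminus K$ combined with $H_*(M \setminus K; \Z) = (\Z, \Z, 0, 0)$ (valid since $M$ is a $\Z$-homology sphere) to force $(t-1)$ to act invertibly on $H_2(\tilde X; \Z)$. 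Tensoring with $\Q$, $H_2(\tilde X; \Q)$ becomes a finitely generated torsion-free (hence free) $\Q[t^{\pm 1}]$-module admitting $(t-1)$ as an automorphism; this is impossible unless the module is zero, so $H_2(\tilde X; \Q) = 0$. Since $M \setminus K$ has the homotopy type of a 2-complex, $H_2(\tilde X; \Z)$ embeds in a free abelian chain group and is $\Z$-torsion-free; being also $\Z$-torsion (from $H_2(\tilde X; \Q) = 0$), it vanishes. Asphericity of the knot exterior then gives $H_2([\pi_K, \pi_K]; \Z) = H_2(\tilde X; \Z) = 0$.

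Choosing a $\Z$-basis of $A$ and lifts $x_1, \ldots, x_n \in [\pi_K, \pi_K]$, I define $\phi: F \to [\pi_K, \pi_K]$ by sending free generators to the $x_i$. By construction $\phi$ is iso on $H_1$ and (trivially) surjective on $H_2$, so Stallings supplies isomorphisms $F/F_k \cong [\pi_K, \pi_K]/[\pi_K, \pi_K]_k$ for every $k$. To extend this to an isomorphism of semi-direct products, fix a meridian $\mu \in \pi_K$; the conjugation action of $\mu$ on $A$ is $V^{-1}V^T$, which lifts to an automorphism $\tau \in \Aut(F)$ via the surjection $\Aut(F) \twoheadrightarrow \GL(n;\Z)$. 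The main obstacle is arranging a single $\tau$ that realizes the $\mu$-conjugation on every $F/F_k$ simultaneously: this uses the surjectivity of $\Aut(F) \to \Aut(F/F_k)$ for each $k$ together with a coherent choice of the lifts $x_i$ (for instance, as a $\Z[t^{\pm 1}]$-equivariant lift of a presentation of $A$). The hardest single ingredient is the vanishing $H_2(\tilde X; \Z) = 0$ via the two-step Wang/rational argument; the simultaneous compatibility of $\tau$ across all $k$ is the second subtlety.
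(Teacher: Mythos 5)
Your overall strategy coincides with the paper's: show $H_1([\pi_K,\pi_K];\Z)$ is free abelian of rank $\deg\Delta_K$ and $H_2([\pi_K,\pi_K];\Z)=0$, feed a rank-$\deg\Delta_K$ free group into Stallings' theorem, and then transport the meridian action. The paper simply cites Rapaport and Crowell for the two homological inputs; you try to prove them, and this is where the one genuine gap sits. Your argument for (a) assumes a Seifert matrix $V$ with $\det V=\lc\Delta_K=\pm1$, i.e.\ a unimodular Seifert matrix. Such a matrix exists only when the knot bounds a Seifert surface of genus $\deg\Delta_K/2$ (a $2h\times 2h$ Seifert matrix with $2h>\deg\Delta_K$ necessarily has $\det V=0$, since $\det(tV-V^T)=\det(V)t^{2h}+\cdots$). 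Monic $\Delta_K$ does not guarantee this: the untwisted Whitehead double of the trefoil (or the Conway knot $11_{n34}$) has $\Delta_K=1$ but positive genus, so every Seifert matrix is singular and your reduction to $\Coker(tI-V^{-1}V^T)$ never gets started. For the general case you must either cite Rapaport--Crowell as the paper does, or argue directly with the $\Z[t^{\pm1}]$-module $\Coker(tV-V^T)$ without inverting $V$.

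Two smaller points. First, in (b) you invoke asphericity of the knot exterior, which can fail for knots in a general integral homology sphere (the exterior may be reducible); but you only need the Hopf epimorphism $H_2(\widetilde X;\Z)\twoheadrightarrow H_2(\pi_1\widetilde X;\Z)$, which holds for any space, and the paper uses exactly this — otherwise your Wang-sequence/torsion-free argument for $H_2(\widetilde X;\Z)=0$ is Milnor's and is correct. Second, you are right to flag the ``single $\tau\in\Aut(F)$ for all $k$'' issue — the paper's own proof is equally silent and only really produces a compatible family $\tau_k\in\Aut(F/F_k)$, which is all that is used later — but your proposed repair via a ``$\Z[t^{\pm1}]$-equivariant lift'' of the generators cannot work: making the meridian conjugation preserve the free subgroup generated by the lifts $x_i$ would exhibit $\pi_K$ as $F\rtimes\Z$, i.e.\ would force the knot to be fibered.
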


\begin{proof}
Rapaport \cite{Ra} and Crowell \cite{Cro} prove that, since $\Delta_K $ is monic, $H_1([\pi_K, \pi_K]; \Z)$ is finitely and freely generated and of rank $\mathrm{deg} \Delta_K $, that is, $H_1([\pi_K, \pi_K]; \Z) \cong \Z^{\mathrm{deg} \Delta_K} $. For $m \geq 2$, the homology $H_m([\pi_K, \pi_K]; \Z)=0$ is widely known; see, e.g., \cite{Cro}. Therefore, it follows from \cite[Theorem 3.4]{Sta} that $[\pi_K, \pi_K]/([\pi_K, \pi_K])_k$ is isomorphic to $F/F_k$ for some free group $F$. Since an action of $\Z$ on $[\pi_K, \pi_K]/([\pi_K, \pi_K])_k$ can be identified with that on $F/F_k$, we have $[\pi_K, \pi_K]/([\pi_K, \pi_K])_k \rtimes \Z \cong F /F_k \rtimes_{\tau} \Z $ for some $\tau$, as required.
\end{proof}
We end this section with a remark on the assumption that $\Delta_K(t)$ is monic. As is known \cite{Cro,Ra}, if this is not the case, then $H_1([\pi_K, \pi_K]; \Z)$ is not finitely generated over $\Z$. Consequently, the $k$-th meta-nilpotent quotient can never be of the form $F/F_k \rtimes_{\tau} \Z$ for any free group $F$ of finite rank.

\section{Meta-nilpotent quotients of knot groups}
\label{Ss45}
While $\Delta_K(t) $ is required to be monic in Proposition \ref{bb12}, here, we give a similar theorem (Theorem \ref{bb1279}) that is suitable to every knot. For this, we will discuss meta-nilpotent quotients.

First, let us briefly review the $p$-localization of nilpotent groups. Here, we refer the reader to \cite{Rag}. For any group $G$ and $k \in \mathbb{N} \cup \{ \infty\} $, there exist a unique group $G/G_k \otimes \Z_{(p)}$ and a homomorphism $\kappa: G/G_k \ra G/G_k \otimes \Z_{(p)}$ satisfying that
\begin{enumerate}[(R1)]
\item For any $m \in \Z$, the quotient homomorphism 
\[\bar{ \kappa} :G_m/G_{m+1} \lra ( G/G_k \otimes \Z_{(p)})_m / ( G/G_k \otimes \Z_{(p)})_{m+1} \]
induced by $\kappa$ is equal to the localization at $p$ of the additive homomorphism $G_m/G_{m+1} \ra (G_m/G_{m+1})_{(p)}$ at $p$.
\item The induced map $\kappa_* : H_*(G/G_k ; \Z) \ra H_*(G/G_k \otimes \Z_{(p)} ;\Z )$ on the group homology with coefficients $\Z$ is the localization of $\Z$ at $p$.
\end{enumerate}

The purpose of this section is to show the following:
\begin{prop}\label{bb127}
Let $\tau: G \ra G$ be an automorphism. Then, there exists a group isomorphism $\tau\otimes \Z_{(p)} : G/G_k \otimes \Z_{(p)} \ra G/G_k \otimes \Z_{(p)} $ such that $(\tau\otimes \Z_{(p)}) \circ \kappa = \kappa \circ \tau$ and the quotient map
\[(\tau\otimes \Z_{(p)})_* : \frac{(G/G_k \otimes \Z_{(p)})_m }{( G/G_k \otimes \Z_{(p)})_{m+1}} \lra \frac{(G/G_k \otimes \Z_{(p)})_m }{( G/G_k \otimes \Z_{(p)})_{m+1}} \]
is the $p$-localization of the quotient $ \overline{\tau} : G_m/ G_{m+1} \ra G_m/ G_{m+1} $ for any $m \in \N.$

Furthermore, the existence of $\tau\otimes \Z_{(p)} $ is unique up to conjugacy.
\end{prop}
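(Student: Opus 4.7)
The plan is to build $\tau\otimes\Z_{(p)}$ by functoriality of the $p$-localization on nilpotent groups, and then verify the two required properties separately.

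First, since $\tau$ is an automorphism of $G$, it preserves each term $G_k$ of the lower central series and therefore descends to an automorphism $\overline{\tau}: G/G_k \to G/G_k$. The characterization of the $p$-localization packaged by (R1) and (R2) (see \cite{Rag}) is equivalent to the universal property that $\kappa: G/G_k \to G/G_k \otimes \Z_{(p)}$ is initial among homomorphisms from $G/G_k$ to $p$-local nilpotent groups. Applied to the composite $\kappa \circ \overline{\tau}$, this yields a unique homomorphism $\tau \otimes \Z_{(p)}$ satisfying $(\tau \otimes \Z_{(p)}) \circ \kappa = \kappa \circ \overline{\tau}$, which, after identifying $\kappa$ with its precomposition by the projection $G \to G/G_k$, is the claimed commutativity. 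Applying the same recipe to $\overline{\tau}^{-1}$ produces a two-sided inverse, so $\tau \otimes \Z_{(p)}$ is a group isomorphism. This gives existence and in fact strict uniqueness of the lift; uniqueness up to conjugacy then follows \emph{a fortiori}.

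Second, to verify the condition on the graded pieces, I would argue by naturality of the lower central series together with (R1). Since $\overline{\tau}$ preserves $(G/G_k)_m$ and $\tau\otimes\Z_{(p)}$ commutes with $\kappa$, the map $\tau\otimes\Z_{(p)}$ preserves $(G/G_k\otimes\Z_{(p)})_m$, because this filtration is functorial and is $p$-locally generated by $\kappa((G/G_k)_m)$ via (R1). Property (R1) identifies the $m$-th LCS quotient of $G/G_k\otimes\Z_{(p)}$ with the $p$-localization of the abelian group $G_m/G_{m+1}$, and a short diagram chase using the uniqueness clause in the universal property of abelian $p$-localization then shows that $(\tau\otimes\Z_{(p)})_{*}$ on this quotient is exactly the $p$-localization of $\overline{\tau}_{*}: G_m/G_{m+1} \to G_m/G_{m+1}$.

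The main obstacle I expect is the bookkeeping in the second step: one needs that $(G/G_k\otimes\Z_{(p)})_m$ coincides with the $p$-local closure of $\kappa((G/G_k)_m)$, so that $\tau\otimes\Z_{(p)}$ automatically respects this filtration. This is essentially the content of (R1), but unpacking the identifications carefully is the only substantive step; everything else is a routine application of the uniqueness clauses in the universal properties.
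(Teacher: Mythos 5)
Your proof is correct but follows a genuinely different route from the paper's. You construct $\tau\otimes\Z_{(p)}$ in one step by invoking the universal property of $p$-localization of nilpotent groups (Hilton's characterization, equivalent to (R1)--(R2) and proved in the cited reference \cite{Rag}): existence, commutativity with $\kappa$, invertibility, and in fact \emph{strict} uniqueness then fall out formally --- a somewhat stronger conclusion than the stated uniqueness up to conjugacy. The paper instead proceeds by induction on the nilpotency class, lifting $\tau\otimes\Z_{(p)}$ through the central extension $G/G_{k+1}\otimes\Z_{(p)}\to G/G_k\otimes\Z_{(p)}$ one stage at a time using the cohomological lifting criterion of Proposition~\ref{Projprop1}; the five-term exact sequence and the extension class in $H^2$ supply the lift, which by that proposition is unique only up to conjugacy. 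Your route is shorter and more conceptual, but it relies on having the full universal property available, whereas the paper works only from (R1)--(R2) and recycles Proposition~\ref{Projprop1}, which is also the engine behind later arguments (e.g., Theorem~\ref{ii98788}). One small simplification to your write-up: you need not worry whether $\tau\otimes\Z_{(p)}$ preserves the filtration $(G/G_k\otimes\Z_{(p)})_m$ --- any group endomorphism automatically preserves the lower central series --- so (R1) is only needed to identify the graded quotients with $(G_m/G_{m+1})_{(p)}$, after which the universal property of abelian $p$-localization pins down the induced map.
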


\begin{thm}\label{bb1279}
Let $K$ be a knot in an integral homology 3-sphere $M$, and let $F$ be the free group of rank $ \mathrm{deg} \Delta_K = 2g$. Suppose $p$ and $\lc \Delta_K$ are relatively prime, (possibly $p=0$). 

 For any $m \in \N $, there exist an automorphism $\tau : F/F_m \otimes \Z_{(p)} \ra F /F_m \otimes \Z_{(p)}$ and a $\Z$-equivariant homomorphism $\psi_m : \pi_K \ra (F /F_m \otimes \Z_{(p)} )\rtimes_{\tau} \Z $, which gives rise to an isomorphism
\begin{equation}\label{slsl} ( ([\pi_K ,\pi_K ] /[\pi_K ,\pi_K ]_m) \otimes \Z_{(p)}) \rtimes \Z \cong ( F /F_m \otimes \Z_{(p)}) \rtimes_{\tau } \Z, \end{equation}
and $ \tau$ with $m=1$ lies in the symplectic group $ \Sp(2g;\Z_{(p)})\subset \GL(2g;\Z_{(p)})$. Here, $g= \mathrm{deg}\Delta_K/2$, and we regard any automorphism $ F/F_1 \otimes \Z_{(p)} \ra F /F_1 \otimes \Z_{(p)}$ as an element of $\GL(2g;\Z_{(p)})$.

\end{thm}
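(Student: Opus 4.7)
My plan is to mirror the proof of Proposition~\ref{bb12} while replacing the monic hypothesis on $\Delta_K$ with its $p$-local analogue. The key observation is that, when $\lc\Delta_K$ is a unit of $\Z_{(p)}$, the palindromic symmetry $\Delta_K(t)\doteq t^{2g}\Delta_K(t^{-1})$ forces the constant term $\Delta_K(0)$ to be a unit as well, so the Alexander module $A:=H_1([\pi_K,\pi_K];\Z)$, which is controlled by $\Delta_K$ over $\Z[t^{\pm 1}]$ via the Crowell--Rapaport analysis, satisfies
\[
A\otimes\Z_{(p)}\cong\Z_{(p)}^{2g}
\]
as a $\Z_{(p)}$-module. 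Combined with the classical vanishing $H_m([\pi_K,\pi_K];\Z)=0$ for $m\ge 2$, this supplies the $p$-local substitute for the Crowell--Rapaport input used in Proposition~\ref{bb12}.

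For part (i), I would then pick $2g$ elements of $[\pi_K,\pi_K]$ whose classes form a $\Z_{(p)}$-basis of $A\otimes\Z_{(p)}$ and let $F$ be the free group on the corresponding symbols. The resulting map $F\to[\pi_K,\pi_K]$ induces an isomorphism on $H_1(-;\Z_{(p)})$ and (trivially) a surjection on $H_2(-;\Z_{(p)})$, so the $p$-localized version of Stallings' theorem \cite[Theorem~3.4]{Sta}, interpreted through the localization formalism of \cite{Rag} recalled in (R1)--(R2), yields isomorphisms
\[
F/F_m\otimes\Z_{(p)}\xrightarrow{\ \cong\ }[\pi_K,\pi_K]/[\pi_K,\pi_K]_m\otimes\Z_{(p)}
\]
for every $m\in\N$. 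The conjugation action of $\Z=\pi_K/[\pi_K,\pi_K]$ on the left-hand term lifts by Proposition~\ref{bb127} to an automorphism of $[\pi_K,\pi_K]/[\pi_K,\pi_K]_m\otimes\Z_{(p)}$, and thus transports, along the above isomorphism, to the desired automorphism $\tau$ of $F/F_m\otimes\Z_{(p)}$; combined with the splitting $\pi_K\cong[\pi_K,\pi_K]\rtimes\Z$, this produces the $\Z$-equivariant map $\psi_m$ and the isomorphism~\eqref{slsl}. In the case $m=1$ the construction specializes to the monodromy action on $A\otimes\Z_{(p)}\cong\Z_{(p)}^{2g}$, and Milnor duality \cite{Mil} on the infinite cyclic cover supplies a non-degenerate skew-symmetric form preserved by this monodromy, forcing $\tau\in\Sp(2g;\Z_{(p)})$.

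For part (ii), I would pass to the inverse limit in $m$ of the isomorphisms in (i); since each step is natural in $m$, compatibility is automatic, and residual nilpotence of $[\pi_K,\pi_K]$ identifies it (after $p$-localization) with $\varprojlim_m[\pi_K,\pi_K]/[\pi_K,\pi_K]_m\otimes\Z_{(p)}$. The pseudo-alternating case reduces to the residually nilpotent one using the residual nilpotence of the commutator subgroup of such knot groups, as provided by \cite{MM}. The main obstacle I anticipate is the clean execution of $p$-local Stallings on both sides simultaneously, together with checking that the automorphism obtained via Proposition~\ref{bb127} is sufficiently natural in $m$ to permit the passage to the inverse limit in (ii); once those technical points are dispatched, the symplectic statement and (ii) follow essentially formally.
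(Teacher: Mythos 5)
Your outline follows essentially the same route as the paper's proof: you first reduce to showing $H_1([\pi_K,\pi_K];\Z_{(p)})\cong\Z_{(p)}^{2g}$ and $H_2([\pi_K,\pi_K];\Z_{(p)})=0$, then construct a map $F\to[\pi_K,\pi_K]$ inducing a $\Z_{(p)}$-homology isomorphism, apply the $p$-localized Stallings lemma (the paper's Lemma~\ref{kkk129}) to get the level-$m$ isomorphisms, transport the $\Z$-action via Proposition~\ref{bb127}, and invoke Milnor duality for the symplectic claim; part (ii) goes through residual nilpotence with input from \cite{MM}. This is the paper's strategy.

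Two points deserve care. First, for the claim $\tau\in\Sp(2g;\Z_{(p)})$ as a matrix group, it is not enough to pick an arbitrary $\Z_{(p)}$-basis of $H_1([\pi_K,\pi_K];\Z_{(p)})$: you must choose the generators of $F$ so that they map to a \emph{symplectic} basis with respect to the Milnor pairing (which exists precisely because the pairing is non-degenerate over $\Z_{(p)}$). Only with that choice does the monodromy matrix land in the standard $\Sp(2g;\Z_{(p)})\subset\GL(2g;\Z_{(p)})$ rather than in the isometry group of some other skew form. The paper is explicit about this and you should be too; it also matters downstream, since the well-definedness of the class in $\SpOut$ up to conjugation (Theorem~\ref{ii98788}) rests on $\Sp(2g;\Z_{(p)})$ acting transitively on symplectic bases.

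Second, you justify $H_2([\pi_K,\pi_K];\Z_{(p)})=0$ by citing the ``classical vanishing'' $H_m([\pi_K,\pi_K];\Z)=0$ for $m\ge2$, but the paper only asserts that vanishing under the monic hypothesis of Proposition~\ref{bb12}, following Crowell and Rapaport. In the present theorem $\Delta_K$ need not be monic over $\Z$, and the paper deliberately re-derives $H_2(E_K^\infty;\Z_{(p)})=0$ from Milnor's duality for the infinite cyclic cover together with the observation that the longitude bounds a Seifert surface, so that the boundary inclusion induces the zero map on $H_1(\cdot;\Z_{(p)})$. Your route can be repaired (the $\Q$-coefficient version of Milnor's Assertion 5 together with the freeness of $H_2$ of a $2$-dimensional spine does give the $\Z$-statement unconditionally, and then flatness of $\Z_{(p)}$ transfers it), but you should either supply that argument or use the paper's self-contained $\Z_{(p)}$-version directly, since the citation as written does not cover the non-monic case.
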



\begin{defn}\label{bb3312}
Take a knot $K \subset M$. If $p$ and $ \lc \Delta_K $ are coprime, we call such an automorphism $\tau : F/F_m \otimes \Z_{(p)} \ra F /F_m \otimes \Z_{(p)}$ {\it a (nilpotently) $p$-localized monodromy (of $K$)} (Later, we discuss the choices of $\tau$; see \S \ref{Ss4}). If $p$ and $ \lc \Delta_K $ are not coprime, we define a $p$-localized monodromy of $K$ by $1$.
\end{defn}
Before turning to the proofs, we briefly review the relation between lifts of homomorphisms and the second group homology (see Proposition \ref{Projprop1} below). Let $\rho: \Gamma \to G$ be a group homomorphism. Let us consider the following group homomorphisms:
\[
{\normalsize
\xymatrix{
0\ar[r] & N \ar[rr]^{\kappa} & & \widetilde{\Gamma} \ar[rr]& & \Gamma \ar[d] ^{ \rho} \ar[r] & 1& (\textrm{central extension})\\
0\ar[r] & K\ar[rr] & & \widetilde{G} \ar[rr]^{p} & & G \ar[r] & 1& (\textrm{central extension}).
}}\]
Here we assume that the center of $\widetilde{G}$ is equal to $K$.
We now discuss the existence of a lift $\widetilde{\rho}: \widetilde{\Gamma} \to \widetilde{G}$ of $\rho$.
Recall the connecting map $\delta : H^1( N; K ) \ra H^2( \Gamma ; K)$ in the five-term exact sequence of group cohomology. Since $H^2(G; K)$ is in one-to-one correspondence with the set of equivalence classes of central extensions of $G$ (see, e.g., \cite[Sections 3--4]{BT}), we obtain the associated cohomology $2$-class.

\begin{prop}
[{See, e.g., \cite[Propositions 2.1.8 and 2.1.9]{BT}}]\label{Projprop1} In the above notation, the homomorphism $\rho$ admits a lift $\widetilde{\rho}: \widetilde{\Gamma} \to \widetilde{G}$ if and only if there exists a homomorphism $\alpha: N \to K$ such that
$ \delta ' (\alpha) = \rho^* ( \sigma_{\widetilde{G} }) $. Here, we identify $H^1(N;K) $ with $\Hom(N,K )$.

Furthermore, suppose $\kappa N \subset [\widetilde{\Gamma}, \widetilde{\Gamma}]$. Then another such lift $\rho'$ is conjugate to $\rho$ if and only if $\alpha = \alpha'$, where $\alpha'$ is the homomorphism $N \to K$ associated with $\rho'$ and satisfying $\delta ' (\alpha') = (\rho')^* ( \sigma_{\widetilde{G} }) $.
\end{prop}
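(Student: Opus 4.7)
The plan is to run the standard cohomological argument relating lifts of group homomorphisms to morphisms of central extensions.

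For existence, I would observe that a lift $\widetilde{\rho}$, if it exists, carries $\kappa N$ into $K$ and so restricts to a homomorphism $\alpha := \widetilde{\rho}|_{N}\colon N \to K$, fitting into a commutative square of central extensions. Such a square amounts, in $H^{2}(\Gamma;K)$, to the equality of $\rho^{*}(\sigma_{\widetilde{G}})$ (classifying the pullback extension of $\widetilde{G}$ along $\rho$) and of $\alpha_{*}(c_{\widetilde{\Gamma}})$ (classifying the pushout of the top extension along $\alpha$), where $c_{\widetilde{\Gamma}} \in H^{2}(\Gamma;N)$ is the class of $0 \to N \to \widetilde{\Gamma} \to \Gamma \to 1$. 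The main technical input is the naturality identification
\[
\alpha_{*}(c_{\widetilde{\Gamma}}) \;=\; \delta(\alpha) \quad \text{in } H^{2}(\Gamma;K),
\]
where $\delta$ is the transgression in the five-term exact sequence (of the Lyndon--Hochschild--Serre spectral sequence for the top extension with trivial coefficients $K$); under $H^{1}(N;K) = \Hom(N,K)$, this gives exactly the criterion $\delta(\alpha) = \rho^{*}(\sigma_{\widetilde{G}})$. Conversely, given such an $\alpha$, the resulting equivalence of the pushout and pullback extensions, composed with the canonical maps $\widetilde{\Gamma} \to \alpha_{*}\widetilde{\Gamma}$ and $\rho^{*}\widetilde{G} \to \widetilde{G}$, supplies the lift.

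For uniqueness up to conjugacy, I would compare two lifts $\widetilde{\rho}, \widetilde{\rho}'$ of $\rho$ via $f(x) := \widetilde{\rho}'(x)\widetilde{\rho}(x)^{-1}$. Since both lifts project to $\rho$, the values of $f$ lie in $K$; centrality of $K$ in $\widetilde{G}$ forces $f$ to be a homomorphism $\widetilde{\Gamma} \to K$, and hence to factor through $\widetilde{\Gamma}^{\mathrm{ab}}$. The hypothesis $\kappa N \subset [\widetilde{\Gamma},\widetilde{\Gamma}]$ kills $\kappa N$ in $\widetilde{\Gamma}^{\mathrm{ab}}$, so $f|_{N} = 0$ and $\alpha = \alpha'$; this is the ``only if'' direction. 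For the converse, when $\alpha = \alpha'$ the map $f$ descends to $\bar f \colon \Gamma \to K$, and such homomorphisms are realized by conjugating $\widetilde{\rho}$ by elements $g \in \widetilde{G}$, via $x \mapsto [g,\widetilde{\rho}(x)]$ (which lies in $K$ by centrality); this exhibits $\widetilde{\rho}'$ as a conjugate of $\widetilde{\rho}$.

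The chief obstacle will be the naturality identification $\alpha_{*}(c_{\widetilde{\Gamma}}) = \delta(\alpha)$: carrying this out rigorously requires unpacking a 2-cocycle representative of the top extension class and tracing it through the transgression in the Lyndon--Hochschild--Serre spectral sequence. Once this is in place, the rest is bookkeeping using centrality of $K$ in $\widetilde{G}$ together with the hypothesis on $\kappa N$.
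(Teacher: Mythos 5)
The paper does not give its own proof of Proposition~\ref{Projprop1}; it simply cites Beyl--Tappe, so there is no in-text argument to compare against. Your treatment of the existence direction is the standard one and is sound: a lift restricts to $\alpha = \widetilde\rho\circ\kappa: N\to K$, the resulting commutative square of central extensions is equivalent to $\alpha_*(c_{\widetilde\Gamma})=\rho^*(\sigma_{\widetilde G})$ in $H^2(\Gamma;K)$, and the identification $\alpha_*(c_{\widetilde\Gamma})=\delta(\alpha)$ with the transgression is a classical fact (and is indeed the technical heart of the existence claim).

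The uniqueness direction, however, contains a genuine gap. You correctly show that for two lifts $\widetilde\rho,\widetilde\rho'$ of the \emph{same} $\rho$, the difference $f(x)=\widetilde\rho'(x)\widetilde\rho(x)^{-1}$ is a homomorphism $\widetilde\Gamma\to K$, and that $\kappa N\subset[\widetilde\Gamma,\widetilde\Gamma]$ forces $f|_N=0$, hence $\alpha=\alpha'$. But for the converse you assert that every homomorphism $\bar f:\Gamma\to K$ ``is realized by conjugating $\widetilde\rho$ by elements $g\in\widetilde G$, via $x\mapsto[g,\widetilde\rho(x)]$.'' Two things go wrong here. First, $[g,\widetilde\rho(x)]\in K$ does not follow from centrality of $K$; it requires $[p(g),\rho(x)]=1$ in $G$, i.e.\ that $p(g)$ centralize $\rho(\Gamma)$, which restricts $g$ to a proper subgroup of $\widetilde G$. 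Second, even on that subgroup the resulting map to $\Hom(\Gamma,K)$ need not be surjective. A minimal illustration: take $N=0$, $\widetilde\Gamma=\Gamma=\mathbb Z$, $G=1$, $\widetilde G=K=\mathbb Z/2$ (so $Z(\widetilde G)=K$ and $\kappa N\subset[\widetilde\Gamma,\widetilde\Gamma]$ both hold), and $\rho$ trivial; the two homomorphisms $\mathbb Z\to\mathbb Z/2$ are distinct lifts with the same (vanishing) $\alpha$, yet conjugation in the abelian group $\widetilde G$ is trivial, so they are not conjugate. Thus the ``if'' direction does not follow from the ingredients you invoke, and the step that would need to be supplied is precisely the surjectivity of the connecting map $g\mapsto(x\mapsto[g,\widetilde\rho(x)])$ onto $\Hom(\Gamma,K)$ --- which does not hold without further hypotheses on $G$, $\widetilde G$, and $\rho$.

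A secondary remark: because your argument shows $\alpha=\alpha'$ is \emph{automatic} (under $\kappa N\subset[\widetilde\Gamma,\widetilde\Gamma]$) for two lifts of the same $\rho$, the ``only if'' direction becomes vacuous under your reading. This suggests that the proposition's intended setup (as in Beyl--Tappe) either allows $\rho$ and $\rho'$ to be distinct homomorphisms $\Gamma\to G$, or measures the discrepancy by a class in $H^1(\Gamma;K)$ rather than by $\alpha$ alone; you should re-read the reference to pin down the correct equivalence relation before attempting the converse.
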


\begin{proof}
[Proof of Proposition \ref{bb127}] The proof is by induction on $m$. If $m=2$, it is enough to define $\tau \otimes \Z_{(p)} $ by the localization of $\tau$ at $p$.

Let us assume $m>2 $ and a homomorphism $\tau\otimes \Z_{(p)} : G/G_m \otimes \Z_{(p)} \ra G/G_m \otimes \Z_{(p)} $ satisfying the required condition. Let $\alpha$ be the localization of the restriction of $ \tau $ at $p$:
\[\alpha= (\tau\otimes \Z_{(p)})_* : (G_m/G_{m+1})_{(p)} = \frac{(G/G_k \otimes \Z_{(p)})_m }{( G/G_k \otimes \Z_{(p)})_{m+1}} \lra \frac{(G/G_k \otimes \Z_{(p)})_m }{( G/G_k \otimes \Z_{(p)})_{m+1}} . \]
By (R2) and using diagram chasing on the five-term exact sequences, we can easily check that $\delta '(\alpha) = \rho^* ( \sigma_{\widetilde{G} }) $. Therefore, applying the setting,
\[ \rho= \tau\otimes \Z_{(p)}, \ \ \ \ G= \Gamma= G/G_m \otimes \Z_{(p)}, \ \ \ \ \widetilde{G}= \widetilde{\Gamma}= G/ G_{m+1}\otimes \Z_{(p)},\]
to Proposition \ref{Projprop1}, we have $\widetilde{\rho} : G/G_{m+1} \otimes \Z_{(p)} \ra G/G_{m+1} \otimes \Z_{(p)} $. If $\widetilde{\rho} $ is replaced by $\tau\otimes \Z_{(p)}$, it satisfies the required properties by construction. Moreover, by Proposition \ref{Projprop1}, the construction of $\widetilde{\rho}$ is unique up to conjugacy.
\end{proof}

Turning now to the proof of Theorem \ref{bb1279}, we need two lemmas. Since $H_*(M \setminus K;\Z ) \cong H_*(S^1;\Z) $ by Alexander duality, there is uniquely an epimorphism $ \pi_1(M \setminus K ) \ra \Z $ up to signs, and we can take the infinite cyclic covering space $E_K^{\infty} $.

\begin{lem}
[{cf. \cite[Assertion 5]{Mil}}]\label{kkk5} Suppose $p$ and $\lc \Delta_K $ are relatively prime. Then, the homology $H_1(E_K^{\infty} ;\Z_{(p)}) $ with coefficients $\Z_{(p)}$ is isomorphic to $ ( \Z_{(p)})^{\mathrm{deg} \Delta_K} $.
\end{lem}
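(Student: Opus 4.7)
The plan is to identify $H_1(E_K^\infty;\Z_{(p)})$ with the $p$-localization of the Alexander module $A := H_1(E_K^\infty;\Z)$ and to show that it is a free $\Z_{(p)}$-module of rank $\deg\Delta_K$. The argument combines a rank computation after inverting all primes with a Nakayama-type count after reducing modulo $p$, both controlled by the hypothesis that $\lc\Delta_K$ is a unit in $\Z_{(p)}$.

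First, I would invoke the classical Seifert-surface presentation of the Alexander module: any Seifert surface for $K$ in $M$ with Seifert matrix $V\in M_{2g_0}(\Z)$ yields a square presentation
\[
\Lambda^{2g_0} \xrightarrow{\;tV-V^T\;} \Lambda^{2g_0} \lra A \lra 0,\qquad \Lambda := \Z[t^{\pm 1}],
\]
with $\det(tV-V^T)$ equal to $\Delta_K(t)$ up to units in $\Lambda$. The adjugate matrix identity then shows that $\Delta_K$ annihilates $A$, and so $A_p := A\otimes\Z_{(p)}$ is a module over the quotient ring $\Z_{(p)}[t^{\pm 1}]/(\Delta_K)$. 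Because $\lc\Delta_K\in\Z_{(p)}^{\times}$ and $\Delta_K(0)\neq 0$, this quotient ring is a free $\Z_{(p)}$-module of rank $\deg\Delta_K$ (with basis $1,t,\dots,t^{\deg\Delta_K-1}$); therefore $A_p$ is finitely generated over $\Z_{(p)}$.

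Next, I would compute the $\Z_{(p)}$-rank and the $\mathbb{F}_p$-dimension of $A_p/pA_p$ separately. Tensoring with $\Q$, the module $A\otimes\Q$ is torsion over the PID $\Q[t^{\pm 1}]$ with order ideal $(\Delta_K)$, so by the structure theorem $\dim_{\Q}(A\otimes\Q)=\deg\Delta_K$, whence $\rank_{\Z_{(p)}}A_p=\deg\Delta_K$. Tensoring instead with $\mathbb{F}_p$, the same PID argument over $\mathbb{F}_p[t^{\pm 1}]$ gives $\dim_{\mathbb{F}_p}(A\otimes\mathbb{F}_p) = \deg(\Delta_K \bmod p)$, and the coprimality of $p$ and $\lc\Delta_K$ guarantees that this reduction has unchanged degree; therefore $\dim_{\mathbb{F}_p}(A_p/pA_p)=\deg\Delta_K$.

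Since $A_p$ is a finitely generated $\Z_{(p)}$-module whose $\Z_{(p)}$-rank equals $\dim_{\mathbb{F}_p}(A_p/pA_p)$, the structure theorem over the PID $\Z_{(p)}$ forces the $p$-primary torsion of $A_p$ to vanish, and hence $A_p\cong\Z_{(p)}^{\deg\Delta_K}$, as required. The main technical point is verifying that $\Delta_K\bmod p$ still has degree $\deg\Delta_K$, which is immediate from the hypothesis on $\lc\Delta_K$ together with the fact that the Seifert matrix reduces term-wise modulo $p$; everything else is routine module theory over local PIDs.
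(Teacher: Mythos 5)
Your proof is correct, and it reaches the paper's conclusion by a genuinely different route to freeness over $\Z_{(p)}$. The paper cites the Seifert-matrix presentation $A-tA^{t}$ with determinant $\Delta_K$ and then asserts, essentially as the $\Z_{(p)}$-version of the Rapaport--Crowell argument, that a monic determinant forces $H_1(E_K^{\infty};\Z_{(p)})$ to be finitely generated and free; the rank is then read off after tensoring with $\Q$. You instead derive finite generation abstractly (by viewing $A_p$ as a module over the finite $\Z_{(p)}$-algebra $\Z_{(p)}[t^{\pm 1}]/(\Delta_K)$), and then deduce freeness by observing that the $\Q$-rank and the $\mathbb{F}_p$-dimension of the mod-$p$ reduction agree, so the structure theorem over the local PID $\Z_{(p)}$ rules out $p$-torsion. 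This is a clean, self-contained alternative that avoids unpacking what the paper leaves implicit, and it uses nothing about the presentation matrix beyond its being square with determinant $\Delta_K$.

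Two small points to tighten. First, for $\Z_{(p)}[t^{\pm 1}]/(\Delta_K)$ to coincide with $\Z_{(p)}[t]/(\Delta_K)$, hence be $\Z_{(p)}$-free of rank $\deg\Delta_K$ with the basis $1,t,\dots,t^{\deg\Delta_K-1}$, you need $\Delta_K(0)$ to be a \emph{unit} in $\Z_{(p)}$, not merely nonzero: otherwise inverting $t$ is a genuine localization and destroys finite generation over $\Z_{(p)}$. This does hold here because the symmetry $\Delta_K(t)\doteq\Delta_K(t^{-1})$ forces $\Delta_K(0)\doteq\lc\Delta_K$, but the hypothesis should be stated as such. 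Second, the $\mathbb{F}_p$-reduction step is only meaningful for $p>0$; for $p=0$ one has $\Z_{(0)}=\Q$ and freeness is automatic from the $\Q$-rank computation alone, so that case should be handled (trivially) on its own.
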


\begin{proof}
Choose a genus-$g$ Seifert surface of $K$. According to \cite[Theorem 6.5]{Lic}, there is a finite presentation of $H_1(E_K^{\infty} ;\Z_{(p)}) $ as
\[ (\Z_{(p)}[t^{\pm 1} ])^{2g} \xrightarrow{\ A-tA^t \ } (\Z_{(p)}[t^{\pm 1} ])^{2g} \lra H_1(E_K^{\infty} ;\Z_{(p)}) \ra 0 \]
such that det$A-tA^t = \Delta_K $. Since the leading coefficient invertible in $\mathbb Z_{(p)}$, $\Delta_K $ may be monic in $\Z_{(p)}$. Therefore, $H_1(E_K^{\infty} ;\Z_{(p)}) $ is free and finitely generated over $\Z_{(p)} $. After tensoring with $\Q$, we can verify by using the elementary divisor theorem that the rank is determined by the degree of $\Delta_K(t) $. Hence, $H_1(E_K^{\infty} ;\Z_{(p)}) \cong ( \Z_{(p)})^{\mathrm{deg}\Delta_K} $, as desired.
\end{proof}

\begin{lem}[{Localization at $p$ of Stallings theorem \cite[Theorem 7.3]{Sta}}]\label{kkk129} Let $f: G \ra L$ be a group homomorphism such that $ H_1(f; \Z_{(p)} )$ is an isomorphism and $ H_2(f; \Z_{(p)})$ is surjective. Assume that, for any $m \in \mathbb{N}$, $ G_m/G_{m+1} \otimes \Z_{(p)}$ is finitely generated and free over $\Z_{(p)}$. Then, $f$ induces isomorphisms,
\[ (G_{k-1}/G_k) \otimes \Z_{(p)} \cong (L_{k-1}/L_k) \otimes \Z_{(p)} ,\] and induces embeddings $ G/G_k \hookrightarrow L/L_k$ for any $k \in \mathbb{N}$.
\end{lem}
\noindent
The key point here is that, under the given assumption, the first (co)homology groups in the universal coefficient theorem do not affect the second ones. Therefore, an argument similar to the proof of \cite[Theorem 7.3]{Sta} applies (we omit the proof).
\begin{proof}
[Proof of Theorem \ref{bb1279}] First, let us review the Milnor pairing \cite{Mil}. Let $E_{K}^{\infty } $ be the cyclic covering space, and let $t$ be the covering transformation on $ H^1(E_{K}^{\infty } ;\Z_{(p)})= \Z_{(p)}^{2g}$. Let $ \mathcal{G}$ denote $ [\pi_K, \pi_K]= \pi_1 (E_{K}^{\infty })$. This is known as the Milnor pairing (see \cite[Assertion 9 and Remark 1]{Mil}), and there is a non-degenerate anti-symmetric bilinear map $ \langle , \rangle : H^1(E_{K}^{\infty } ;\Z_{(p)}) \times H^1(E_{K}^{\infty } ;\Z_{(p)}) \ra \Z_{(p)}$ satisfying $ \langle t a , t b \rangle = \langle a ,b \rangle $ for any $a, b \in H^1(E_{K}^{\infty };\Z_{(p)}) $. This implies the existence of a symplectic basis on $ H^1(E_{K}^{\infty } ;\Z_{(p)})$ with respect to $ \langle , \rangle $. Namely,
this $t $ can be regarded as an element of $\Sp(2g ;\Z_{(p)})$. In addition, we claim $H_2( \mathcal{G};\Z_{(p)})=0$. To see this, consider the long exact sequence,
\[ 0 \ra H_2(E_{K}^{\infty } ;\Z_{(p)}) \ra H_2(E_{K}^{\infty }, \partial E_{K}^{\infty } ;\Z_{(p)})
\ra H_1(\partial E_{K}^{\infty } ;\Z_{(p)}) \xrightarrow{\ (\mathrm{inclusion})_* \ }H_1( E_{K}^{\infty } ;\Z_{(p)}).\]
By Milnor pairing again, the second and third terms are $\Z_{(p)}$. The last map is zero since $H_1(\partial E_{K}^{\infty };\Z_{(p)}) \cong \Z_{(p)}$ is generated by a longitude, which is bounded by a Seifert surface in $E_{K}^{\infty } $. Therefore, $H_2(E_{K}^{\infty } ;\Z_{(p)})=0 $. Since $ \pi_1 E_{K}^{\infty } = \mathcal{G}$ and there is an epimorphism $H_2(E_{K}^{\infty } ;\Z_{(p)}) \ra H_2( \pi_1(E_{K}^{\infty } );\Z_{(p)}) $, we have $H_2( \mathcal{G};\Z_{(p)})=0$, as desired.

Let us construct an isomorphism below \eqref{seq84}. Choose a homomorphism $\phi: F \ra \mathcal{G}$ whose induced map on the first homology $H_1(\bullet, \Z_{(p)})$ sends the generators of $F$ to the symplectic basis. Then, $\phi$ induces an isomorphism on $H_1(\bullet , \Z_{(p)})$, which can be regarded as an element of $\Sp(2g;\Z_{(p)})$. Since $H_2( \mathcal{G};\Z_{(p)})=0$ and $H_1( \mathcal{G};\Z_{(p)}) $ is free by Lemma \ref{kkk5}, it follows from Lemma \ref{kkk129} that $\phi$ induces $F/F_m \ra\mathcal{G}/\mathcal{G}_m$, which ensures isomorphisms $F_k/F_{k+1} \otimes \Z_{(p)} \cong (\mathcal{G}_k/\mathcal{G}_{k+1} )\otimes \Z_{(p)}$ and
\begin{equation}\label{seq84} \Psi_k: F/F_k \otimes \Z_{(p)} \stackrel{\sim}{\lra} (\mathcal{G}/\mathcal{G}_{k}) \otimes \Z_{(p)} , \end{equation}
for any $k\in \N.$ By Proposition \ref{bb127}, an action of $\Z$ on $ \mathcal{G} $ yields that of $ (\mathcal{G}/\mathcal{G}_{k}) \otimes \Z_{(p)} $, which gives an automorphism $\tau : F/F_k \otimes \Z_{(p)} \ra F/F_k \otimes \Z_{(p)}$ via $\Psi_k$. Hence, by construction, the composite
\[\psi_m: \pi_K \xrightarrow{ \rm proj. } (\mathcal{G}/\mathcal{G}_{k}) \rtimes \Z
\xrightarrow{ \rm localization. } (\mathcal{G}/\mathcal{G}_{k}\otimes \Z_{(p)} ) \rtimes \Z
\xrightarrow{ \Psi_k^{-1} \rtimes \mathrm{id}_{\Z} } (F/F_{k}\otimes \Z_{(p)} ) \rtimes_{\tau} \Z \]
has the desired properties. 
\end{proof}

\begin{rem}\label{okura}
For a concrete knot $K \subset S^3$, the above proof implies that, to compute $\tau$ explicitly, we need to describe a symplectic basis of $H^1(E_{K}^{\infty };\Z_{(p)}) $ explicitly. Fortunately, Ohkura \cite{Oh} gives a list of matrix presentations of the Milnor parings on $H^1(E_{K}^{\infty };\Z_{(p)}) $ with $p=0$, where $K$ is one of ``quasi-Pretzel knots". Thus, it is not so hard to concretely obtain $0$-localized monodromies according to this list.
\end{rem}
\begin{rem}\label{okura2}
Furthermore, when $p=0$, we can easily check that Theorem \ref{bb1279} holds for any knot $K$ in any rational homology sphere $M$. 
Since the proof is essentially identical to the preceding one,
we omit the details.
\end{rem}

\section{Knot invariants from conjugacy classes of $\Out (F/F_k \otimes \Z_{(p)}) $}
\label{Ss4}
Here, we will discuss the choices of localized monodromies of a knot (Proposition \ref{ii9}) and suggest a procedure to obtain knot invariants (Theorem \ref{ii98788}). In what follows, Aut$(G)$ denotes the automorphism group of $G$ and $\mathrm{Inn}(G) $ (resp. $\Out(G)$) is the inner (resp. outer) automorphism group of $G$. Here, $\Out(G) $ is defined to be $\Aut(G) / \mathrm{Inn}(G) $.

\begin{prop}\label{ii9}
Let $G$ be a group. Take $\varphi, \psi \in \mathrm{Aut}(G)$. We denote the classes in $\Out (G)$ by $\overline{\varphi}$ or $ \overline{\psi}$. Then, the following are equivalent:

\begin{enumerate}[(I)]
\item There is a group isomorphism $\rho : G \rtimes_{\varphi} \Z \cong G \rtimes_\psi \Z$, whose quotient on $\Z$ is the identity map.
\item The classes $ \overline{\psi}$ and $\overline{\varphi}$ are conjugate in $\Out (G)$.
\end{enumerate}
\end{prop}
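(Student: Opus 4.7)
The plan is to prove the two implications separately, using the hypothesis $H_1 = \Z$ only in (I) $\Rightarrow$ (II), where it is needed to pin down the subgroup $G$ intrinsically inside $G \rtimes \Z$.

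For the implication (II) $\Rightarrow$ (I), I would argue by direct construction. Suppose $\overline{\psi} = \overline{\alpha}\, \overline{\varphi}\, \overline{\alpha}^{-1}$ in $\Out(G)$. Lift this to an equation in $\Aut(G)$: there exist $\alpha \in \Aut(G)$ and $g \in G$ such that $\alpha \varphi \alpha^{-1} = \iota_g \circ \psi$, where $\iota_g$ denotes conjugation by $g$. Writing $t_\varphi, t_\psi$ for generators of the respective $\Z$-factors, define $\rho : G \rtimes_\varphi \Z \to G \rtimes_\psi \Z$ on generators by $\rho|_G = \alpha$ and $\rho(t_\varphi) = g \cdot t_\psi$. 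The only relation to verify is $\rho(t_\varphi h t_\varphi^{-1}) = \rho(\varphi(h))$, which reduces to the identity $\iota_g \circ \psi \circ \alpha = \alpha \circ \varphi$ just set up. The inverse is constructed symmetrically from $\alpha^{-1}$, so $\rho$ is an isomorphism.

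For the implication (I) $\Rightarrow$ (II), the key observation is that the hypothesis $H_1(G \rtimes_\varphi \Z) = \Z$ forces the canonical projection $G \rtimes_\varphi \Z \to \Z$ (onto the second factor) to coincide, up to sign, with the abelianization map. Consequently, $G$ is characterized intrinsically as the kernel of the abelianization and hence is preserved by any isomorphism; the same for $G \rtimes_\psi \Z$. So any isomorphism $\rho : G \rtimes_\varphi \Z \to G \rtimes_\psi \Z$ restricts to an automorphism $\alpha \coloneqq \rho|_G \in \Aut(G)$, and one can write $\rho(t_\varphi) = g \cdot t_\psi^{\epsilon}$ for some $g \in G$ and $\epsilon = \pm 1$ (where $\epsilon$ records the induced sign on $H_1 = \Z$). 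Applying $\rho$ to the defining relation $t_\varphi h t_\varphi^{-1} = \varphi(h)$ and simplifying in the target yields
\[
\alpha \circ \varphi \circ \alpha^{-1} \;=\; \iota_g \circ \psi^{\epsilon},
\]
which passes to $\overline{\alpha}\,\overline{\varphi}\,\overline{\alpha}^{-1} = \overline{\psi}^{\,\epsilon}$ in $\Out(G)$.

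The main obstacle is the $\epsilon = \pm 1$ ambiguity above; a priori one only gets that $\overline{\varphi}$ is conjugate to $\overline{\psi}$ \emph{or} to $\overline{\psi}^{-1}$. This is not a genuine obstruction for the intended application because there is always a canonical isomorphism $G \rtimes_\psi \Z \cong G \rtimes_{\psi^{-1}} \Z$ via $(h,n)\mapsto (h,-n)$, so replacing $\psi$ by $\psi^{-1}$ is invisible at the level of the isomorphism class of the semi-direct product. In the knot-theoretic setting, the $\Z$-factor is oriented by the meridian of the oriented knot, and the claimed conjugacy in $\Out(G)$ is exactly the one surviving after this identification.
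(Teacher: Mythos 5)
Your argument follows the same route as the paper's: decompose $\rho$ into $\alpha = \rho|_G$ and the image of a generator of $\Z$, then read off a conjugation identity; and conversely, build $\rho$ from data realizing the conjugacy. The one place you depart from the paper is that you keep track of the sign $\epsilon$ of the map $\rho$ induces on the $\Z$-quotient, whereas the paper simply writes ``By assumption, $\rho(e,1) = (g,1)$'' without explaining why the $\Z$-component of $\rho(e,1)$ must be $+1$ rather than $-1$.

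That extra care is warranted: you have in fact located a genuine gap in the paper's proof and a flaw in the statement. The hypothesis $H_1 = \Z$ only pins down $G$ intrinsically (it equals the commutator subgroup, i.e.\ the kernel of the abelianization); it does not force $\rho$ to preserve a chosen generator of the $\Z$-quotient. If $\rho(e,1) = (g,-1)$, the same computation produces $\alpha \circ \varphi \circ \alpha^{-1} = \iota_g \circ \psi^{-1}$, so one only concludes that $\overline{\varphi}$ is conjugate to $\overline{\psi}^{\pm 1}$. This ambiguity is not vacuous: take $G = \Z/5$, $\varphi = 2$, $\psi = 3 = 2^{-1}$. Then $H_1(G \rtimes_\varphi \Z) = H_1(G \rtimes_\psi \Z) = \Z$ (the coinvariants of $\Z/5$ under $2$ and under $3$ both vanish), and $t \mapsto s^{-1}$, $x \mapsto x$ gives an isomorphism $G \rtimes_\varphi \Z \cong G \rtimes_\psi \Z$; yet $\Out(\Z/5) = (\Z/5)^\times$ is abelian and $2 \neq 3$, so (II) fails. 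Your closing remark is the correct repair: either the proposition should conclude only that $\overline{\varphi}$ is conjugate to $\overline{\psi}$ or to $\overline{\psi}^{-1}$, or one must additionally require $\rho$ to intertwine the two projections to $\Z$ with the identity (which is automatic in the knot-theoretic application, where the meridian fixes the orientation of the $\Z$-factor).
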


\begin{proof}
Suppose (I). By assumption, there are $g$ and an automorphism $ \alpha \in \Aut(G)$ satisfying $\rho (e ,1)= (g,1)$ and 
$ \rho( x,0)= (\alpha (x),0 )$ for any $x \in G$. Notice the equality $(e,1)(x,0)(e,1)^{-1}= (\varphi (x),0) \in G \rtimes_{\varphi} \Z $. The application of $\rho$ to the left-hand side is computed as
\[ \rho ( (e,1)(x,0)(e,1)^{-1})= (g,1)(\alpha(x),0)(\psi ^{-1}(g^{-1}),-1)= (g \psi(\alpha(x)) g^{-1},0), \]
while its application to the right side implies $ \rho( (\varphi (x),0) )= (\alpha(\varphi(x)) ,0)$. Namely, $g \psi(\alpha(x)) g^{-1} =\alpha(\varphi(x))$, which means (II).

Conversely, if we assume (II), we have $g \psi(\alpha(x)) g^{-1} =\alpha(\varphi(x))$ for some $g\in G$ and $\alpha \in \mathrm{Aut}(G) $. Then, the correspondence defined by $(x,0) \mapsto (\alpha(x),0)$ and $(e,1) \mapsto (g,1)$ gives rise to an isomorphism $ G \rtimes_{\varphi} \Z \cong G \rtimes_\psi \Z $, as desired.
\end{proof}

Next, we will discuss symplecticity and the choices of localized monodromies. Let $q_k : \Aut( F/F_k \otimes \Z_{(p)}) \ra \Aut(F/F_1 \otimes \Z_{(p)}) =\GL(2g;\Z_{(p)} )$ be the epimorphism induced by the projection $ F/F_k \ra F/F_1$, $\mathrm{SpAut}( F/F_k \otimes \Z_{(p)} )$ be the subgroup $q_k^{-1}(\Sp(2g;\Z_{(p)}))$ and $\mathrm{SpOut}( F/F_k \otimes \Z_{(p)} )$ be the subgroup of $\Out(F/F_k \otimes \Z_{(p)} )$ generated by $q_k^{-1}(\Sp(2g;\Z_{(p)}))$. For a knot $K$ in $M$, the localized monodromy $ \tau $ of $K$ lies in $ \mathrm{SpAut}( F/F_k \otimes \Z_{(p)} )$ by Theorem \ref{bb1279}. As a modification of Proposition \ref{ii9}, we will classify the choice of the monodromies.

\begin{thm}\label{ii98788}
For a knot $K$ in an integral homology 3-sphere $M$, suppose that $ p$ and $\lc \Delta_K(t)$ are relatively prime. Choose a $p$-localized monodromy $ \tau \in \mathrm{SpAut}( F/F_k \otimes \Z_{(p)} )$ of $K$. Then, the conjugacy class of $\tau$ in $\mathrm{SpOut}(F/F_k \otimes \Z_{(p)} )$ depends only on the knot type of $K$.
\end{thm}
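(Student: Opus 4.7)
The plan is to show that any two $p$-localized monodoromies $\tau, \tau' \in \SpAut(F/F_k \otimes \Z_{(p)})$ produced from $K$ (or from an isotopic knot) are conjugate in $\SpOut(F/F_k \otimes \Z_{(p)})$. First, I would apply Theorem \ref{bb1279}(i) to each of the two choices, obtaining a chain of $\Z$-equivariant isomorphisms
\[ (F/F_k \otimes \Z_{(p)}) \rtimes_\tau \Z \;\cong\; \bigl(([\pi_K,\pi_K]/[\pi_K,\pi_K]_k)\otimes \Z_{(p)}\bigr) \rtimes \Z \;\cong\; (F/F_k \otimes \Z_{(p)}) \rtimes_{\tau'} \Z, \]
whose composite $\rho$ preserves the $\Z$-direction, since the meridian of an oriented knot in an integral homology sphere is canonical up to conjugacy. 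To invoke Proposition \ref{ii9}, I would then check that the first homology of each side equals $\Z$: the abelianization is $\Z_{(p)}^{2g}/(\tau_{ab}-\id) \oplus \Z$, and because the $t$-action on $H_1(E_K^\infty;\Z_{(p)})$ has characteristic polynomial equal (up to units of $\Z_{(p)}[t^{\pm 1}]$) to $\Delta_K(t)$, one computes $\det(\tau_{ab}-\id) = \pm\Delta_K(1)/\lc\Delta_K = \pm 1/\lc\Delta_K$, which is a unit in $\Z_{(p)}$ by the coprimality assumption. Hence the first summand vanishes, and Proposition \ref{ii9} produces an $\alpha \in \Aut(F/F_k \otimes \Z_{(p)})$ whose class conjugates $\overline{\tau}$ to $\overline{\tau'}$ in $\Out(F/F_k \otimes \Z_{(p)})$.

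The main difficulty is refining $\alpha$ so that it actually lies in $\SpAut$. Instead of taking $\alpha$ abstractly from Proposition \ref{ii9}, I would set $\alpha = (\Psi_k')^{-1} \circ \Psi_k$, where $\Psi_k, \Psi_k' : F/F_k \otimes \Z_{(p)} \xrightarrow{\sim} (\mathcal{G}/\mathcal{G}_k) \otimes \Z_{(p)}$ are the two isomorphisms constructed in the proof of Theorem \ref{bb1279} from two choices of symplectic basis of $H^1(E_K^\infty;\Z_{(p)})$ with respect to the Milnor pairing. Since both $\Psi_k$ and $\Psi_k'$ abelianize to isomorphisms $\Z_{(p)}^{2g} \to H^1(E_K^\infty;\Z_{(p)})$ carrying the standard symplectic basis to a symplectic basis of the Milnor pairing, their comparison $\alpha_{ab}$ is a symplectic change of basis and therefore lies in $\Sp(2g;\Z_{(p)})$. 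Thus $\alpha \in \SpAut$, and its image in $\SpOut$ realizes the required conjugation.

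Finally, to pass from $K$ to an isotopic knot $K'$, I would note that the ambient isotopy induces a meridian-preserving isomorphism $\pi_K \cong \pi_{K'}$, which identifies the two intrinsic meta-nilpotent quotients appearing in the middle of the display above. Composing this with the Theorem \ref{bb1279} constructions for $K$ and $K'$ produces an isomorphism of the same form, and the argument in the previous paragraph identifies the $\SpOut$-conjugacy classes of all resulting monodoromies. The critical step throughout is the symplectic upgrade in the second paragraph: Proposition \ref{ii9} alone yields only $\Out$-conjugacy, and the symplectic refinement has to be read off from the Milnor pairing that governs the construction of $\Psi_k$, rather than from an abstract isomorphism $\rho$.
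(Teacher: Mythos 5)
Your proposal is correct and follows essentially the same route as the paper: the paper's proof likewise takes $\alpha=\Psi_k^{-1}\circ\Psi_k'$, observes that its abelianization is a symplectic change of basis because $\Sp(2g;\Z_{(p)})$ acts transitively on symplectic bases of the Milnor pairing (lifting via Proposition \ref{Projprop1} to $\SpAut$), and then runs the argument of Proposition \ref{ii9} using $H_1\cong\Z$. Your explicit verification that $\det(\tau_{ab}-\id)$ is a unit in $\Z_{(p)}$ (via $\Delta_K(1)=\pm1$ and the coprimality of $p$ with $\lc\Delta_K$) is a slightly more detailed justification of the $H_1=\Z$ hypothesis than the paper's appeal to Alexander duality, but the argument is the same.
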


\begin{proof}
Let $\psi_k , \psi_k'$ be $p$-localized monodromies of the knot $K$. Recall the isomorphisms $\Psi_k, \Psi'_k$ from \eqref{seq84}, which are constructed from choices of the symplectic basis of $ F/F_1 \otimes \Z_{(p)}$. Since $\Sp(2g ;\Z_{(p)})$ transitively acts on the set of all symplectic bases of $ F/F_1 \otimes \Z_{(p)}$, the composite $\Psi_1^{-1} \circ \Psi_1'$ with $k=1$ can be regarded as an element of $\Sp(2g ;\Z_{(p)})$. Thus, by Proposition \ref{Projprop1}, the lift $\Psi^{-1}_k \circ \Psi_k'$ with $k>1$ can be regarded as an element of $\mathrm{SpAut}(F /F_k \otimes \Z_{(p)})$. In other words, the choice of $F /F_k \otimes \Z_{(p)}$ as a fixed group can be covered by conjugacy of $ \mathrm{SpAut}(F/F_k \otimes \Z_{(p)} )$. Recall that $H_1( \pi_1(M \setminus K) ;\Z) \cong \Z $ by Alexander duality. Hence, the proof of Proposition \ref{ii9} implies that $ \psi_k$ and $ \psi_k'$ are conjugate in $\mathrm{SpOut}(F /F_k \otimes \Z_{(p)})$.
\end{proof}

In conclusion, we can use several algorithms to get knot invariants. As one of them, we reach the following definition:
\begin{defn}\label{def115}
Take a representation $\rho : \mathrm{SpOut}(F/F_k \otimes \Z_{(p)} ) \ra \GL(V)$ for some finite-dimensional vector space $V$. For a knot $K$, we call the eigenvalue polynomial of $\rho(\tau) $ {\it the meta-nilpotent Alexander polynomial} (associated with $\rho$).
\end{defn}

\begin{exa}\label{ex115}
If $k=1$ and $V=H_1( E_K^{\infty }; \Q)$ and $\rho $ is the inclusion, the eigenvalue polynomial of $\rho(\phi) $ is the classical Alexander polynomial $\Delta_K$. In fact, this $\rho(\phi) $ can be regarded as a characteristic polynomial of $H_1([\pi_K,\pi_K];\Q)$, which is known to be $\Delta_K$; see \cite[Theorem 6.17]{Lic}.
\end{exa}

\begin{rem}\label{ex125}
Recall that the eigenvalue polynomial $\lambda(t)$ of a symplectic matrix $A$ satisfies a symmetry in the sense of $\lambda(t^{-1}) = t^{-2 \mathrm{rk} A}\lambda(t) $. In particular, if $\rho: \mathrm{SpOut}(F/F_k \otimes \Z_{(p)}) \ra \GL(V)$ is a symplectic representation, the meta-nilpotent Alexander polynomial has a symmetry as well.
\end{rem}

\begin{exa}\label{ex117}The next example is taken from \cite{Co} (see also \cite{Ka} for a Jacobi-diagrammatic construction). Given a finite-dimensional Lie algebra $\mathfrak{g}$ over $\Q$, the authors considered a linear equivalence relation $\sim$ on the universal enveloping algebra $U(\mathfrak{g})^{\otimes 2g}$ and constructed a representation $ \mathrm{Out}(F /F_k\otimes \Q ) \ra \mathrm{End}(U(\mathfrak{g})^{\otimes 2g} )/\! \sim $ \footnote{Strictly speaking, the paper \cite{Co} only defines a representation $\mathrm{Out}(F) \ra \mathrm{End}(U(\mathfrak{g})^{\otimes 2g} )/ \! \sim $; however, from the definition, if we replace $\sim$ with an appropriate equivalence relation, the construction yields representations of $\mathrm{Out}(\lim_{\infty \leftarrow k} F/F_k \otimes \Q)$.}.
It is interesting to analyze knot invariants from this perspective.
\end{exa}
\noindent
In summary, for such an invariant, it is important to concretely construct a representation $ \mathrm{SpOut}(F/F_k \otimes \Z_{(p)} ) \ra \GL(V)$ for some $p>k$. In this paper, we will give other ways to obtain knot invariants. 

\section{Comparison to Dehn-Nielsen embedding}
\label{93553}
As a summary of the results in the preceding sections, this section compares the localized meta-nilpotence of knots and fibered knots.

For this, we need Proposition \ref{jj2} below. Since $F/F_k$ is known to be torsion-free nilpotent, Lemma \ref{kkk129} gives an embedding $\iota : F/F_{k} \hookrightarrow F/F_{k}\otimes \Z_{(p)} $. Moreover, as will be shown in \eqref{seq5} later, this induces an injective homomorphism $ \Aut(F/F_{k} ) \hookrightarrow \Aut(F/F_{k} \otimes \Z_{(p)})$. Furthermore, we show the following (see Section \ref{9914} for the proof):
\begin{prop}\label{jj2} Let $p=0$. Since $\Z_{(0)} = \Q$, we will use $\Q$ rather than $\Z_{(p)}$. The quotient of the injection to the outer automorphism groups $\iota_*: \Out(F/F_{k} ) \rightarrow \Out(F/F_{k} \otimes \Q) $ is also injective.
\end{prop}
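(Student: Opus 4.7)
The plan is to reduce the injectivity of $\iota_*$ to a normalizer identity in the Malcev completion $(F/F_k)\otimes\Q$, and then to verify the identity by induction along the lower central series. Consider the natural commutative diagram whose rows are the two exact sequences $1\to\mathrm{Inn}(G)\to\Aut(G)\to\Out(G)\to 1$, for $G=F/F_k$ and $G=(F/F_k)\otimes\Q$, linked by the maps induced by $\iota$. The middle vertical arrow is injective by the assertion stated just before the proposition, so a standard diagram chase shows
$$\ker(\iota_*)\;\cong\;(\Aut(F/F_k)\cap\mathrm{Inn}((F/F_k)\otimes\Q))\,/\,\mathrm{Inn}(F/F_k),$$
reducing the task to the reverse containment $\Aut(F/F_k)\cap\mathrm{Inn}((F/F_k)\otimes\Q)\subseteq\mathrm{Inn}(F/F_k)$.

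Next I would use the classical fact that centralizers in torsion-free nilpotent groups are isolated subgroups: if $y^n$ centralizes $g$ for some nonzero integer $n$, then so does $y$. Since every element of $(F/F_k)\otimes\Q$ has a positive-integer power lying in $F/F_k$, this yields $C_{(F/F_k)\otimes\Q}(F/F_k)=Z((F/F_k)\otimes\Q)$ and, in particular, $Z(F/F_k)=F/F_k\cap Z((F/F_k)\otimes\Q)$. The latter equality makes the left vertical arrow of the diagram injective, and the former translates the preceding reduction into the following equivalent statement: if $g\in(F/F_k)\otimes\Q$ normalizes $F/F_k$, then $g\in(F/F_k)\cdot Z((F/F_k)\otimes\Q)$. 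The whole argument thus rests on the normalizer identity
$$N_{(F/F_k)\otimes\Q}(F/F_k)\;=\;(F/F_k)\cdot Z((F/F_k)\otimes\Q).$$

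To prove this identity I would fix a Malcev basis $\{X_{m,j}\}$ of $\mathfrak{g}=\log((F/F_k)\otimes\Q)$ adapted to the lower central series, so that every $g\in(F/F_k)\otimes\Q$ has a unique ordered expression $g=\prod_{m,j}\exp(X_{m,j})^{a_{m,j}}$ with $a_{m,j}\in\Q$, and $g\in F/F_k$ precisely when all $a_{m,j}\in\Z$. The top-level coordinates ($m=k-1$) parametrize $Z((F/F_k)\otimes\Q)$ and are invisible to conjugation, so it suffices to show that the coordinates $a_{m,j}$ with $m<k-1$ are integers whenever $g$ normalizes $F/F_k$. I would argue this by induction on $m$: assuming integrality at lower levels, the commutator $[g,\exp(X_{0,j})]$, computed via Baker--Campbell--Hausdorff, has its class in $(F/F_k)_m/(F/F_k)_{m+1}$ depending linearly on $a_{m,j}$ modulo quantities already known to be integral, and the condition $[g,\exp(X_{0,j})]\in F/F_k$ then forces $a_{m,j}\in\Z$.

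The main obstacle is the inductive step just sketched: at each filtration level one must cleanly isolate the linear contribution of the new coordinate from the nonlinear BCH tails carrying data from the lower levels. A convenient bookkeeping device is to multiply $g$ at each stage by an integral element of $F/F_k$ realizing the coordinates already known to be integers, so that the residual element has trivial coordinates below level $m$ and its leading coordinate at level $m$ satisfies an outright linear integrality equation inherited from $\exp(X_{0,j})\in F/F_k$. Once every $a_{m,j}$ with $m<k-1$ is an integer, $g$ differs from an element of $F/F_k$ by a central element, which yields the normalizer identity and hence the proposition.
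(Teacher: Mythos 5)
Your reduction is a genuinely different packaging of the problem from the paper's. The paper argues directly by induction on $k$ using the central extension $0\to\Coker(\lambda_k)\to\Out(N_{k+1})\to\Out(N_k)\to0$ obtained from the diagram \eqref{seq48}, reducing the inductive step to the injectivity of $\Coker(\lambda_k)\to\Coker(\lambda_k\otimes\Q)$, i.e., to the torsion-freeness of $\Coker(\lambda_k)$. You instead run the $\mathrm{Inn}$--$\Aut$--$\Out$ ladder and reduce to the normalizer identity $N_{G'}(G)=G\cdot Z(G')$ for $G=F/F_k$, $G'=G\otimes\Q$, which you then attack with Malcev coordinates. That reduction, together with the isolator facts $C_{G'}(G)=Z(G')$ and $Z(G)=G\cap Z(G')$, is correct and is a cleaner group-theoretic reformulation; it makes transparent why only the center-level coordinates are unconstrained.

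However, there is a real gap at the end of the inductive step, and it is exactly the point where the paper does the actual work. After clearing lower levels, your residual element $g'$ lies in $(F/F_k)_m\otimes\Q$ with leading class $\xi\in\mathcal{L}_{m+1}\otimes\Q$ (note also a small off-by-one: the leading class of $[g',\exp(X_{0,j})]$ sits in $(F/F_k)_{m+1}/(F/F_k)_{m+2}\cong\mathcal{L}_{m+2}$, not in $(F/F_k)_m/(F/F_k)_{m+1}$). The condition $[g',\exp(X_{0,j})]\in F/F_k$ for all $j$ gives precisely that $\lambda_{m+1}(\xi)\in\Hom(H,\mathcal{L}_{m+2})$, i.e., that $\lambda_{m+1}(\xi)$ is integral. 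To conclude $\xi\in\mathcal{L}_{m+1}$ you need that $\Im(\lambda_{m+1})$ is a \emph{pure} $\Z$-submodule of $\Hom(H,\mathcal{L}_{m+2})$, equivalently that $\Coker(\lambda_{m+1})$ is torsion-free; otherwise there could be a rational $\xi$ with integral bracket against every $e_j$ but $\xi\notin\mathcal{L}_{m+1}$. This is a nontrivial fact about free Lie rings, and it is precisely the statement the paper proves (by factoring $\lambda_k$ through $\sum_j\kappa_j$ with $\kappa_j(h)=[e_j,[e_j,h]]$, using the Hall basis to see $\Coker(\sum_j\kappa_j)$ is torsion-free, and then applying the snake lemma). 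Your phrase ``the condition $[g,\exp(X_{0,j})]\in F/F_k$ then forces $a_{m,j}\in\Z$'' elides this entirely. As written, the proof is incomplete; supplying the torsion-freeness of $\Coker(\lambda_m)$ (by the Hall-basis argument or some substitute) would close the gap and make the two proofs rely on the same technical kernel.
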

\noindent
Let us explain \eqref{seq367}. Let rk$F=2g $, and $\mathcal{M}_{g,1}$ be the mapping class group of the one-punctured surface $\Sigma_{g,1} $. Recall the Dehn-Nielsen embedding $ \mathcal{M}_{g,1} \hookrightarrow \SpOut(F) $; see, e.g., \cite[Theorem 8.8]{FM}; as mentioned in \cite[Page 446]{Mo2}, the inverse limit according to $F/F_k \ra F/F_{k-1}$ induces an injective homomorphism $\SpOut(F) \hookrightarrow \SpOut( \lim_{\infty \leftarrow k }  F/F_k )$, where the injectivity is immediately obtained from the embedding $F \hookrightarrow \lim_{\infty \leftarrow k }  F/F_k $. To summarize, by Proposition \ref{jj2}, we have the injections,
\begin{equation}\label{seq367}
\mathrm{DN}: \mathcal{M}_{g,1} \hookrightarrow \mathrm{SpOut}(\lim_{\infty \leftarrow k } (F/ F_k \otimes \Q )) \subset \mathrm{Out}(\lim_{\infty \leftarrow k } (F/ F_k \otimes \Q )).
\end{equation}

We now make the comparison in the diagram \eqref{khkh2} below.
Starting from a fibered knot $K$ with monodromy $\phi_K \in \mathcal{M}_{g,1}$, the monodromy $\tau$ obtained in Theorem \ref{bb1279} is, by construction, equal to $\mathrm{DN}(\phi_K)$.
Let $M$ be an integral homology sphere.
The above discussion can be summarized in the following commutative diagram:
\begin{equation}\label{khkh2}{
\xymatrix{ \mathcal{M}_{g,1} /\textrm{conj.} \ar@{^{(}->}[rrr]^{ \textrm{DN embedding} }
& & & \displaystyle{\bigsqcup_{ p \in \mathrm{Spec}\Z}} \mathrm{SpOut} (\lim_{\infty \leftarrow k } (F/ F_k \otimes \Q)) /\textrm{conj.}\\
\{ \textrm{oriented fibered knots of genus }g \}
\ar@{^{(}->}[u]
\ar@{^{(}->}[rrr]^{\rm inclusion} & & & \{ \textrm{knots in }M \textrm{ with deg}\Delta_K=2g\} . \ar[u]
}} \end{equation}
Here, the vertical maps represent the correspondences between a knot $K$ and its ($p$-localized) monodromy $\tau$. In particular, the right vertical map extends the classical correspondence between a fibered knot and its monodromy.

This diagram is suggestive. For instance, it is natural to ask about the injectivity and the image of the right vertical map. The left vertical map is easily seen to be injective; in particular, knot invariants derived from $\tau$ can completely classify fibered knots. However, the right vertical map is not injective. For example, if $K'$ is a knot with Alexander polynomial $\Delta_{K'} = 1$, then, by construction, the localized monodromies of $K$ and $K \sharp K'$ coincide.

It remains an open question whether the restriction of the right vertical map to the set of alternating knots, or to the set of $\Q$-homologically fibered knots, is injective. Here, a knot $K \subset M$ is said to be $\Q$-{\it homologically fibered} if twice its Seifert genus equals $\deg \Delta_K$. As is known (see, e.g., \cite{MM}), every (pseudo-)alternating knot is $\Q$-homologically fibered; moreover, all knots with fewer than $12$ crossings are $\Q$-homologically fibered except for
$11_{n34}\allowbreak,11_{n42}\allowbreak,11_{n45}\allowbreak, 11_{n67}\allowbreak,11_{n73}\allowbreak, 11_{n107}\allowbreak, 11_{n152}$; see \cite{CL}.

On the other hand, let us discuss a subgroup of $\mathrm{SpOut} (F/F_k \otimes \Z_{(p)})$, which should contain the image of the right vertical map, as an almost surjective map.
What is the minimal subgroup of $ \mathrm{SpOut} ( \lim F /F_k  \otimes \Z_{(p)}) $ that contains the image of the right vertical map?
This section gives partial answers as shown in Propositions \ref{llk1} and \ref{llk1456} below.


To state the propositions, we need some terminology. 
Let $F$ be the free group on $2g$ generators $x_1, \dots, x_{2g}$, identified with $\pi_1(\Sigma_{g,1})$, where the generator $\zeta$ of $\pi_1(\partial \Sigma_{g,1}) \cong \Z$ is regarded as $[x_1,x_2] \cdots [x_{2g-1},x_{2g}]$. 
The image of the Dehn-Nielsen embedding is the subgroup of $\mathrm{Aut}(F)$ consisting of elements $\phi$ satisfying $\phi(\zeta) = \zeta$.
Using the same notation as in \cite{GL} and \cite[Section 3]{Mo2}, let us consider the subgroup of the form
\[ \Aut_0 (F/F_k \otimes \Z_{(p)})' := \{ \phi \in \SpAut (F/F_k \otimes \Z_{(p)} ) \ | \ \phi( \iota_{k} (\zeta)) =\iota_{k} ( \zeta) \ \}, \]
where $\iota_{k} $ is the inclusion $F/F_k \hookrightarrow F/F_k \otimes \Z_{(p)}$, and define
\begin{equation}\label{seq37}
\Aut_0 (F/F_k \otimes \Z_{(p)}):= p_{k+1}(\Aut_0 (F/F_{k+1} \otimes \Z_{(p)})' ),
\end{equation}
where $p_{k+1}$ is the projection $\SpAut (F/F_{k+1} \otimes \Z_{(p)}) \ra \SpAut (F/F_{k} \otimes \Z_{(p)})$. Integrally speaking, we can define a subgroup $ \Aut_0 (F/F_k) $ of
$\Aut_0 (F/F_k)$ in a similar way.
\begin{prop}\label{llk1}
Let $K \subset M$ be a $\Q$-homologically fibered knot of genus $g$. Then, there is a localized monodromy of $K$, which lies in the subgroup $\Aut_0 (F/F_k \otimes \Z_{(p)})$.
\end{prop}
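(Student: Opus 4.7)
The plan is to produce a $p$-localized monodoromy $\tau$ directly from a minimal-genus Seifert surface of $K$ and then to observe that it automatically fixes $\iota_k(\zeta)$, because $\zeta$ represents the longitude of $K$.

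First, I would pick a Seifert surface $\Sigma$ of genus $g$, available by the hypothesis $2g = \deg \Delta_K$. Identify $\pi_1(\Sigma_{g,1})$ with $F = \langle x_1,\dots,x_{2g}\rangle$ so that $\partial \Sigma$ represents $\zeta = [x_1,x_2]\cdots[x_{2g-1},x_{2g}]$, and lift $\Sigma$ into the infinite cyclic cover $E_K^{\infty}$ (possible since $[\Sigma]=0$ in $H_1(M\setminus K)$), obtaining a homomorphism $\phi:F \to \mathcal{G} := \pi_1(E_K^{\infty}) = [\pi_K,\pi_K]$. The crucial geometric claim is that $\phi_*:H_1(F;\Z_{(p)}) \to H_1(\mathcal{G};\Z_{(p)})$ is an isomorphism: both sides are free $\Z_{(p)}$-modules of rank $2g$ (by Lemma \ref{kkk5} and $p \nmid \lc\Delta_K$), and starting from the Seifert-matrix presentation $(A-tA^T)$ of \cite[Theorem 6.5]{Lic}, the invertibility of $A$ over $\Z_{(p)}$ (since $\pm\det A = \lc \Delta_K$ is a unit) lets me reduce the $t$-action to the matrix $A(A^T)^{-1}$ on a single copy of $H_1(\Sigma;\Z_{(p)})$, which is precisely the image of $\phi_*$.

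Combined with $H_2(\mathcal{G};\Z_{(p)}) = 0$ (established inside the proof of Theorem \ref{bb1279}) and $H_2(F)=0$, the hypotheses of Lemma \ref{kkk129} are met, so $\phi$ induces isomorphisms
\[\Psi_k : F/F_k \otimes \Z_{(p)} \xrightarrow{\ \sim\ } (\mathcal{G}/\mathcal{G}_k)\otimes \Z_{(p)}\qquad (k \in \N),\]
compatible with the meridional $\Z$-action. Exactly as in the proof of Theorem \ref{bb1279}, transporting this $\Z$-action across $\Psi_k$ produces a $p$-localized monodoromy $\tau^{(k)}$ of $K$, and the $\tau^{(k)}$ are compatible under the projections $p_{k+1}$ by construction.

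The decisive observation is that $\phi(\zeta) = \ell$, the Seifert longitude of $K$, since $\zeta$ represents $\partial \Sigma$ pushed into $E_K^{\infty}$. Because $\ell$ and the meridian $\mu$ commute inside $\pi_1(\partial (M\setminus K)) \subset \pi_K$, the conjugation action of $\mu$ on $\mathcal{G}$ fixes $\ell$; transporting across $\Psi_k$ yields $\tau^{(k)}(\iota_k(\zeta)) = \iota_k(\zeta)$ at every level. Applied at level $k+1$, this places $\tau^{(k+1)}$ in $\Aut_0(F/F_{k+1}\otimes\Z_{(p)})'$, so $\tau^{(k)} = p_{k+1}(\tau^{(k+1)}) \in \Aut_0(F/F_k\otimes\Z_{(p)})$ as desired. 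The main obstacle is the Seifert-matrix calculation verifying that $\phi_*$ really is an isomorphism (not just a map between modules of the same rank); once that is in hand, the rest is essentially a re-run of the proof of Theorem \ref{bb1279}, with the extra feature that, because the lift is chosen from a genuine Seifert surface, the longitude-fixing property and hence membership in $\Aut_0$ come for free.
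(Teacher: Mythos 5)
Your proposal is correct and follows essentially the same route as the paper's Appendix~B proof: lift a genus-$g$ Seifert surface into $E_K^{\infty}$, use invertibility of the Seifert matrix over $\Z_{(p)}$ (the content of the paper's Lemma~\ref{pp5}) to see that the induced map on $H_1(\cdot\,;\Z_{(p)})$ is an isomorphism, run the localized Stallings lemma to get the $\Psi_k$, and observe that $\zeta$ maps to the longitude, which is fixed by meridional conjugation, so the level-$(k+1)$ lift lies in $\Aut_0(F/F_{k+1}\otimes\Z_{(p)})'$ and projects into $\Aut_0(F/F_k\otimes\Z_{(p)})$. The one point you leave implicit under ``exactly as in Theorem~\ref{bb1279}'' is that the basis coming from the Seifert surface is symplectic for the Milnor pairing (needed so that $\tau^{(k+1)}$ lies in $\SpAut$, which is part of the definition of $\Aut_0(\cdot)'$); the paper supplies this via the isomorphism $H^*(E_K^{\infty},\partial E_K^{\infty};\Z_{(p)})\cong H^*(\tilde S,\partial\tilde S;\Z_{(p)})$ identifying the Milnor pairing with the intersection form of the surface.
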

\noindent
We give the proofs in Appendix \ref {Ap2}. 

Meanwhile, we discuss a realization of localized monodromies. Precisely,
\begin{prob}\label{llk14532}
Let $\tau $ in the subgroup $ \Aut_0 (F/F_k \otimes \Q)$ satisfy that
$ \mathrm{id}_{F/F_1}- q_k ( \tau) : F/F_1 \otimes \Q \ra F/F_1 \otimes \Q$ is an isomorphism.
Then,
is there a knot in a rational homology sphere such that the monodromy is equal to $\tau$?
\end{prob}
\noindent
Here, we can easily see that the bijectivity of $\mathrm{id}_{F/F_1}- q_k ( \tau) $ is a necessary condition for the existence of a rational homology sphere.

First,
in integral cases, we obtain a simple result:
\begin{prop}\label{llk145}
Let $\tau $ in the subgroup $ \Aut_0 (F/F_k)$ satisfy that
$ \mathrm{id}_{F/F_1}- q_k ( \tau) : F/F_1 \ra F/F_1$ is an isomorphism.
Then,
there is a knot in an integral homology sphere such that the monodromy is equal to $\tau$.
\end{prop}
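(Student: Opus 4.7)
The plan is to realise $\tau$ as the monodromy of a fibered knot, arising as the binding of an open book decomposition on an integer homology 3-sphere. Starting from a suitable lift $\phi \in \mathcal{M}_{g,1}$ of $\tau$, one forms the open book $M_\phi$ with page $\Sigma_{g,1}$ and monodromy $\phi$; the invertibility of $\mathrm{id}-q_k(\tau)$ will force the ambient manifold to be an integer homology sphere.

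First I would lift $\tau$ to $\phi \in \Aut(F)$ with $\phi(\zeta)=\zeta$; by the Dehn--Nielsen isomorphism, such a $\phi$ corresponds to an element of $\mathcal{M}_{g,1}$. By the definition of $\Aut_0(F/F_k)$, we start with a lift $\widetilde\tau_{k+1} \in \Aut(F/F_{k+1})$ satisfying $\widetilde\tau_{k+1}(\iota_{k+1}(\zeta))=\iota_{k+1}(\zeta)$. At each higher level $n>k+1$, any set-theoretic lift exists (the tower $\Aut(F/F_{n+1}) \to \Aut(F/F_n)$ is surjective because $F$ is free), and by Proposition \ref{Projprop1} applied to the central extension $F_n/F_{n+1} \hookrightarrow F/F_{n+1} \twoheadrightarrow F/F_n$, the discrepancy preventing preservation of $\iota_{n+1}(\zeta)$ lies in $F_n/F_{n+1}$ and can be absorbed by adjusting the lift through the derivation-valued kernel of $\Aut(F/F_{n+1}) \to \Aut(F/F_n)$. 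Passing to the inverse limit and using that $F$ is residually nilpotent --- so that $\Aut(F) \hookrightarrow \Aut(\varprojlim F/F_n)$ is injective --- one obtains the desired $\phi \in \Aut(F)$.

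Given $\phi \in \mathcal{M}_{g,1}$, let $M_\phi$ denote the open book with page $\Sigma_{g,1}$, monodromy $\phi$, and connected binding $K$. Then $K$ is a fibered knot with $\pi_1(M_\phi \setminus K) \cong F \rtimes_\phi \Z$. A Mayer--Vietoris calculation for $M_\phi = (M_\phi \setminus K) \cup (K \times D^2)$ gives
\[
H_1(M_\phi;\Z) \;\cong\; \mathrm{coker}\bigl(\mathrm{id}-\phi_* : \Z^{2g} \to \Z^{2g}\bigr) \;=\; \mathrm{coker}\bigl(\mathrm{id}-q_k(\tau)\bigr),
\]
since $\phi_*$ on $F/F_1 \cong \Z^{2g}$ coincides with $q_k(\tau)$; by the hypothesis this cokernel vanishes, and $M_\phi$ is an integer homology 3-sphere. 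Because $K$ is fibered with monodromy $\phi$ and $\phi \bmod F_k = \tau$, Proposition \ref{bb12} identifies the induced monodromy of $K$ at level $k$ with $\tau$, as required.

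The principal obstacle is the lifting step: the definition of $\Aut_0(F/F_k)$ only guarantees a $\zeta$-preserving lift one level higher, and propagating this through the whole tower (and finally to $\Aut(F)$) requires verifying that the derivation-valued obstruction encountered at each stage can in fact be absorbed. This is precisely where working integrally rather than $p$-locally is used --- and is presumably also why the analogous rational statement in Problem \ref{llk14532} is left open. The remaining ingredients --- the open book construction, the Mayer--Vietoris computation, and the identification via Proposition \ref{bb12} --- are routine.
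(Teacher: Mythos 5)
Your proposal attempts to realize $\tau$ as the monodromy of a \emph{fibered} knot, via an open book with page $\Sigma_{g,1}$. The paper takes a genuinely different route: it invokes the Garoufalidis--Levine surjectivity theorem (that $\sigma_k: \mathcal{C}_{g,1} \to \Aut_0(F/F_k)$ is onto) to realize $\tau$ via a homology cobordism whose closure is the desired (in general non-fibered) knot. This difference is not cosmetic --- your approach has a genuine gap, and the gap is precisely why the paper uses cobordisms rather than mapping classes.

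The problem is the lifting step, and it fails at the very first iteration. Suppose $\tilde\tau_{n}\in\Aut(F/F_n)$ satisfies $\tilde\tau_n(\zeta)=\zeta$ and you choose some set-theoretic lift $\tilde\tau_{n+1}\in\Aut(F/F_{n+1})$. The discrepancy $\tilde\tau_{n+1}(\zeta)\zeta^{-1}$ lies in $F_n/F_{n+1}$, as you say; but it \emph{cannot} be absorbed by the kernel $\Hom(H,F_n/F_{n+1})$ of $\Aut(F/F_{n+1})\to\Aut(F/F_n)$. A kernel element $\tilde f$ acts by $\tilde f(\gamma)=f([\gamma])\gamma$, where $[\gamma]\in H=F/F_1$. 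Since $\zeta=[x_1,x_2]\cdots[x_{2g-1},x_{2g}]\in F_1$, both $[\zeta]$ and $[\tilde\tau_{n+1}(\zeta)]$ are zero in $H$, so both $\tilde f\circ\tilde\tau_{n+1}$ and $\tilde\tau_{n+1}\circ\tilde f$ send $\zeta$ to exactly $\tilde\tau_{n+1}(\zeta)$ --- the adjustment is invisible on $\zeta$. So the proposed absorption does not take place. The definition \eqref{seq37} of $\Aut_0(F/F_k)$ only guarantees one $\zeta$-preserving lift (to level $k+1$), and nothing forces that lift to be in $\Aut_0(F/F_{k+1})$ in turn.

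Even setting that aside, the endpoint you aim for is unreachable in general: the image of $\rho_k:\mathcal{M}_{g,1}\to\Aut_0(F/F_k)$ is a \emph{proper} subgroup for $k$ large (and already of finite index $>1$ at small $k$, by Morita's work), the obstructions being exactly the Morita and Enomoto--Satoh traces discussed in Section \ref{93123}. By contrast, Garoufalidis--Levine show $\sigma_k$ from the homology cobordism monoid \emph{is} surjective onto $\Aut_0(F/F_k)$. So ``lift $\tau$ to a mapping class and form the open book'' cannot work for arbitrary $\tau\in\Aut_0(F/F_k)$; one must allow non-fibered knots, which is what the paper's construction via $\mathrm{cl}(N,i^\pm)$ provides. (Your Mayer--Vietoris computation and the final appeal to Proposition \ref{bb12} are fine in themselves, but they only treat the fibered case.)
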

Next, we discuss rational cases, and give partial answers to the above problem.
\begin{prop}\label{llk1456}Let $p=0.$
\begin{enumerate}[(1)]
\item Let $k=1$, and $\tau \in \Aut_0 (F/F_k \otimes \Q) =\Sp(2g;\Q) $ satisfy that
$ \mathrm{id}_{F/F_1}- \tau : F/F_1\otimes \Q \ra F/F_1\otimes \Q$ is an isomorphism.
There is a knot in a rational homology sphere such that the monodromy is $\tau.$
\item Let $k>1. $ For any $\Q$-homologically fibered knot in a rational homology sphere with the monodromy $\tau \in \Aut_0 (F/F_k \otimes \Q) $ and any $ \eta \in \mathrm{ISpAut}_0 (F/F_k)$,
there is a knot in a rational homology sphere with the monodromy $\tau \eta \in \Aut_0 (F/F_k \otimes \Q) $.
\end{enumerate}
\end{prop}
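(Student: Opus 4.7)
I will handle (1) and (2) separately.

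For (1), I would realize $\tau$ via a rational Seifert matrix. Given $\tau \in \Sp(2g;\Q)$ with $\mathrm{id}-\tau$ invertible, set $A \coloneqq -J_g\, \tau\, (\mathrm{id}-\tau)^{-1}$, where $J_g$ is the standard rank-$2g$ symplectic form on $\Q^{2g}$. A direct computation using $\tau^T J_g \tau = J_g$ verifies both $A - A^T = J_g$ and $A^{-T}A = \tau$; in particular the nonsingularity of $A$ is equivalent to the invertibility of $\mathrm{id}-\tau$. The standard realization theorem for rational Seifert matrices (via integer surgery on an appropriate link in $S^3$; cf.\ \cite{Lic}) then produces a knot $K$ in a rational homology $3$-sphere whose Seifert matrix on some genus-$g$ Seifert surface equals $A$. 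The construction in the proof of Theorem \ref{bb1279}, together with Lemma \ref{kkk5} and Trotter's formula for the covering-transformation action on the Alexander module, identifies the level-$1$ monodromy of $K$ with $A^{-T}A = \tau$.

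For (2), the plan is a cut-and-re-glue construction along a Seifert surface. Fix a genus-$g$ Seifert surface $\Sigma \subset M$ of $K$ (which exists because $K$ is $\Q$-homologically fibered of genus $g$), so that $\pi_1(\Sigma)\cong F$ and $\pi_1(\partial\Sigma)\cong \Z\langle\zeta\rangle$. Since $\eta \in \mathrm{ISpAut}_0(F/F_k)$ is an integer symplectic outer automorphism fixing $\iota_k(\zeta)$, the Dehn--Nielsen theorem combined with the integer analogue of Proposition \ref{jj2} produces a mapping class $[\psi] \in \mathcal{M}_{g,1}$ realizing $\eta$, with $\psi|_{\partial\Sigma} = \mathrm{id}$. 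Cut $M \setminus \nu(K)$ along $\Sigma$ to obtain a manifold $N$ whose boundary contains two copies $\Sigma_\pm$ of $\Sigma$, and re-glue $\Sigma_+$ to $\Sigma_-$ via the original identification post-composed with $\psi$. This produces a new closed $3$-manifold $M'$ containing a new knot $K'$ whose Seifert surface is the image of $\Sigma$.

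Two verifications then remain. First, the monodromy of $K'$ at level $k$ is $\tau\eta$: the conjugation action on $\pi_1(E_{K'}^\infty)\otimes \Q$ of a meridian now traverses the twisted gluing, so by the construction in the proof of Theorem \ref{bb1279} the induced action on $F/F_k\otimes \Q$ is exactly $\tau\circ \eta$. Second, $M'$ is a rational homology sphere: using the Mayer--Vietoris sequence associated to $M' = N \cup_\psi (\Sigma\times I)$, the twist alters $H_*(M;\Q)$ only through the action of $\psi_*$ on $H_*(\Sigma;\Q)$, and because $\psi_*$ is symplectic and the original $M$ is a $\Q$-homology sphere, this modification preserves the $\Q$-homology sphere property. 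The main obstacle will be this second verification, which requires a careful diagram chase to control the interaction between the given monodromy $\tau$ and the integer symplectic automorphism $\psi_*$ at level $1$; conjugacy-independence of the resulting monodromy class follows from Theorem \ref{ii98788}.
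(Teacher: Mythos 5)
Both halves of your argument contain a genuine gap, and in each case the missing step is exactly the geometric content that the paper's proof (which goes entirely through homology cobordisms) supplies. In (1), your reduction of $\tau$ to a rational Seifert matrix $A=-J_g\,\tau\,(\mathrm{id}-\tau)^{-1}$ with $A-A^{T}=J_g$ and $A^{-T}A=\tau$ is correct algebra, but the ``standard realization theorem for rational Seifert matrices'' you then invoke is not in \cite{Lic}: the classical realization theorem produces a knot in $S^3$ from an \emph{integer} matrix $A$ with $A-A^{T}$ unimodular. Realizing an arbitrary rational such matrix as the Seifert form of a knot in \emph{some} rational homology sphere is precisely the nontrivial point of statement (1), so your argument assumes what is to be proved. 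The paper instead writes $q_k(\tau)$ as a product of transvections $[T_{\gamma}]_{s,t}$ (using that $\Sp(2g;\Q)$ is generated by transvections, \cite{Ch}), realizes each transvection by Nozaki's homologically fibered knots in lens spaces $L(s,t)$ \cite{Nozaki}, composes the resulting rational homology cobordisms in the monoid $\mathcal{C}_{g,1}^{\Q}$, and takes the closure.

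In (2), the step ``the Dehn--Nielsen theorem produces a mapping class $[\psi]\in\mathcal{M}_{g,1}$ realizing $\eta$'' fails: the homomorphism $\rho_k:\mathcal{M}_{g,1}\to\Aut_0(F/F_k)$ is not surjective for larger $k$ --- its cokernel is detected by the Morita and Enomoto--Satoh traces discussed in Section \ref{93123} of this very paper --- so a general $\eta\in\mathrm{ISpAut}_0(F/F_k)$ need not be induced by any surface diffeomorphism, and your cut-and-reglue along $\Sigma$ by $\psi$ cannot get started. The correct replacement is to glue in a homology cobordism rather than a mapping cylinder: by Garoufalidis--Levine \cite{GL} the map $\sigma_k:\mathcal{C}_{g,1}\to\Aut_0(F/F_k)$ \emph{is} surjective, so one chooses an integral homology cobordism $C_\eta$ with $\sigma_k(C_\eta)=\eta$ and closes up $(M\setminus S)\cdot C_\eta$. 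This also removes the ``main obstacle'' you flag at the end: since $C_\eta$ is an integral homology cobordism and $\eta$ acts trivially on $F/F_1$, the level-$1$ data, and hence the rational homology sphere condition via $\mathrm{id}-q_k(\tau)$ being invertible, are unchanged, with no Mayer--Vietoris diagram chase needed.
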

We also give the proofs in Appendix \ref {Ap3}, with a relation to homology cobordisms.

\section{The automorphism groups $ \mathrm{Aut}( F/F_k \otimes \Z_{(p)})$ and $ \mathrm{Out}( F/F_k \otimes \Z_{(p)})$ }
\label{9914}
Before constructing knot invariants from Theorem \ref{ii98788}, we will study the automorphism groups $ \mathrm{SpAut}( F/F_k \otimes \Z_{(p)})$ and $ \mathrm{SpOut}( F/F_k \otimes \Z_{(p)})$ in detail. Throughout this section, we will employ the usual notation of the mapping class group as in \cite{DJ, ES, Mo, Mo2, Sa} and denote $F/F_1 \cong \Z^{\mathrm{rk} F}$ by $ H$, $F/F_k $ by $ N_k$, and $F_{k-1}/F_{k} $ by $ \mathcal{L}_k$. By $H \otimes \Z_{(p)}$, we mean $H_{(p)}$; By $H_{(p)}^* $, we mean $H \otimes \Z_{(p)} $ the dual space.

We will start with an observation by S. Morita \cite[\S 2]{Mo} and review the sequences \eqref{seq3} and \eqref{seq4}. Consider $\Hom ( H , \mathcal{L}_{k+1} )$ as a $\Z$-module. For a homomorphism $f : H \ra \mathcal{L}_{k+1}$, define $\tilde{f}: N_{k+1} \ra N_{k+1} $ by $\tilde{f} (\gamma )=f([ \gamma]) \gamma $, where $\gamma \in N_{k+1}$ and $[ \gamma] \in H $ is the class of $\gamma $ under the projection $N_{k+1} \ra N_{1} =H$. Then, the correspondence $ f \mapsto \tilde{f}$ gives rise to a homomorphism $\iota : \Hom ( H , \mathcal{L}_{k+1}) \ra \Aut(N_{k+1}) $. Furthermore, let $q_* : \Aut(N_{k+1}) \ra \Aut(N_{k}) $ be the map induced by the projection $ N_{k+1} \ra N_k$. According to Proposition 2.3 in \cite{Mo}, these maps can be summarized as
\begin{equation}\label{seq3}
0 \ra \Hom ( H , \mathcal{L}_{k+1}) \stackrel{\iota }{\lra } \Aut(N_{k+1})
\stackrel{q_* }{\lra } \Aut(N_{k}) \ra 1 \ \ \ \ \ (\mathrm{group \ extension}),
\end{equation}
which is not central. To see the extension is central, we define the subgroup $I(N_k)$ of $\Aut(N_k)$ to be the kernel of the projection $\Aut(N_k) \ra \Aut(N_1) =\GL(H)$. Then, the restriction of \eqref{seq3} written as
\begin{equation}\label{seq4}
0 \ra \Hom ( H , \mathcal{L}_{k+1}) \stackrel{\iota }{\lra } I(N_{k+1}) \stackrel{q_* }{\lra } I(N_k)\ra 1
\end{equation}
is known to be a central extension. Hence, $I ( N_k )$ is a nilpotent group. Here, we should note the fact that the projection $\Aut(N_k) \ra \Aut(N_1) =\GL(H)$ does not split.

In addition, we will examine the localizations of the above sequences. Let $I(N_{k}\otimes \Z_{(p)}) $ be the kernel of the projection $\Aut(N_k \otimes \Z_{(p)}) \ra \GL(H_{(p)})$. Then, the localization of \eqref{seq4} yields a central extension,
\begin{equation}\label{seq5}
0 \ra \Hom ( H_{(p)} , \mathcal{L}_{k+1} \otimes \Z_{(p)}) \xrightarrow{ \ \iota\otimes \Z_{(p)} \ } I(N_{k+1}\otimes \Z_{(p)})
\stackrel{q_* }{\lra } I(N_{k}\otimes \Z_{(p)}) \ra 1.
\end{equation}
Furthermore, according to Proposition 2.6 in \cite{Mo} (which is essentially the Levi decomposition), if $p = 0$, then the projection yields a split extension.
\begin{equation}\label{seq6}
0 \ra I(N_{k}\otimes \Z_{(p)}) \lra \Aut(N_k \otimes \Z_{(p)})
\lra \GL(H_{(p)}) \ra 1 \ \ \ \ \ (\mathrm{split \ extension}),
\end{equation}
and two such splittings are conjugate to each other (In Section \ref{99sec}, we will see that this extension splits even if $k\leq 3$ and $ p>3$). In summary, as a symplectic restriction, $\mathrm{SpAut}(N_k \otimes \Z_{(p)}) $ is also a semi-direct product of the nilpotent group $I(N_{k}\otimes \Z_{(p)}) $ and $\Sp(H_{(p)}) $. Since $ \Aut(F/F_1) \cong \Out(F/F_1) $, the outer $\mathrm{SpOut}(N_k \otimes \Z_{(p)}) $ is a semi-direct product of a nilpotent group and $\Sp(H_{(p)}) $.

Furthermore, when $p=0$, concerning the subgroup $\Aut_0(N_{k}\otimes \Q ) $ in \eqref{seq37}, it has been shown \cite[Theorem 3.3]{Mo2} that the submodule $
( \iota \otimes \Q)^{-1} (\Aut_0(N_{k}\otimes \Q ) \subset \Hom ( H_{(0)} , \mathcal{L}_k\otimes \Q) $ is equal to the kernel of the bracketing:
\begin{equation}\label{seq63}
\Qer([,]: H_{(0)} \otimes (\mathcal{L}_{k}\otimes \Q) \lra \mathcal{L}_{k+1} \otimes \Q) ,
\end{equation}
where $H_{(0)} \cong H_{(0)}^* $ by Poincar\'{e} duality, we identify $\Hom (H_{(0)} , \mathcal{L}_{k}\otimes \Q)$ with $H_{(0)} \otimes (\mathcal{L}_{k}\otimes \Q) $. Since the bracketing is $\Sp$-equivariant, the kernel \eqref{seq63} is a $\Sp$-submodule; the subgroup $\Aut_0(N_{k}\otimes \Q ) $ turns out to be also a semi-direct product of a nilpotent group and $\Sp(H_{(0)}) $.

Next, we will examine the automorphism groups $\mathrm{Inn} (N_k \otimes \Z_{(p)} ) \subset \Aut(N_k \otimes \Z_{(p)}) $. For a group $G$, let $ Z(G)$ denote the center of $G$. Notice that the inner automorphism group $\mathrm{Inn}(N_k \otimes \Z_{(p)}) $ is isomorphic to $ N_k \otimes \Z_{(p)} /Z(N_k \otimes \Z_{(p)})$, which is isomorphic to $N_{k-1} \otimes \Z_{(p)}$ because of the universality (R1)(R2) in \S \ref{Ss45} with the five lemma. Further, the inclusion $ N_{k-1} \otimes \Z_{(p)} =\mathrm{Inn}(N_k \otimes \Z_{(p)}) \subset \Aut(N_k \otimes \Z_{(p)}) $ is described as follows: For $ b \in \mathcal{L}_{ k}$, we define the homomorphism
\[\lambda_k (b) : H \lra \mathcal{L}_{k+1} ; \ \ h \longmapsto [b,h], \]
which the reader should keep in mind. Since $\GL(H) =\Aut(F/F_1) \supset \mathrm{Inn} (F/F_1)= 0 $, we have $\mathrm{Inn} (N_k \otimes \Z_{(p)} ) \subset I(N_k \otimes \Z_{(p)} ) $, which yields the following diagram:
\begin{equation}\label{seq48}
{\normalsize
\xymatrix{
0 \ar[r] & \mathcal{L}_{ k}\otimes \Z_{(p)} \ar[r] \ar@{^{(}->}[d]^{\lambda_k \otimes \Z_{(p)} } & N_{k} \otimes \Z_{(p)} \ar[r]
\ar@{^{(}->}[d]& N_{k-1} \otimes \Z_{(p)} \ar[r] \ar@{^{(}->}[d]& 1 &( \mathrm{central \ extension} )\\
 0 \ar[r] & \Hom ( H_{(p)} , \mathcal{L}_{k+1}\otimes \Z_{(p)}) \ar[r]^{ \ \ \iota }
& I(N_{k+1}\otimes \Z_{(p)}) \ar[r]& I(N_{k}\otimes \Z_{(p)}) \ar[r]& 1 & (\mathrm{central \ extension}) .
}}
\end{equation}
This diagram is commutative by the definitions of $ \iota$ and $\lambda_k$. Here, it is worth noting that $\lambda_k $ (resp. $\lambda_k \otimes \Z_{(p)} $) is equivariant under the action of $ \GL(H )$ (resp. $ \GL(H_{(p)} )$). In addition, we will show an $\Sp$-equivariance: precisely,

\begin{lem}\label{ii92280}
Suppose $\mathrm{rk}F=2g$ and $p\neq 2$. The inclusion $\lambda_k \otimes \Z_{(p)} $ is equivariant under the action of $ \Sp(H_{(p)} )$.
\end{lem}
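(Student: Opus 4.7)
The plan is to verify equivariance by a direct computation. Since $\lambda_k \otimes \Z_{(p)}$ is just the $p$-localization of $\lambda_k$, it suffices to show that $\lambda_k$ itself is $\Sp(H)$-equivariant; in fact one gets $\GL(H)$-equivariance for free.

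First I would make both actions explicit. On $\mathcal{L}_k$, the action is the functorial one via the canonical identification $\mathcal{L}_k = L_k(H)$ (the degree-$k$ part of the free Lie ring on $H$), so $\sigma \in \GL(H)$ acts as a Lie algebra automorphism on the ambient free Lie ring. On $\Hom(H, \mathcal{L}_{k+1})$, the action is by conjugation inside $\Aut(N_{k+1})$ via any lift $\tilde\sigma$ of $\sigma$, which is well-defined (independent of the lift) by centrality of the extension \eqref{seq4}. A short computation with $\iota(f)(\gamma) = f([\gamma])\gamma$ gives
\[
(\tilde\sigma\, \iota(f)\, \tilde\sigma^{-1})(\gamma) \;=\; \tilde\sigma\bigl(f(\sigma^{-1}[\gamma])\bigr)\, \gamma,
\]
so this action agrees with the usual tensor product action $(\sigma \cdot f)(h) = \sigma(f(\sigma^{-1} h))$, where the action on $\mathcal{L}_{k+1}$ is the functorial one.

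With both actions in hand, the equivariance of $\lambda_k$ reduces to the single identity
\[
\lambda_k(\sigma b)(h) \;=\; [\sigma b, h] \;=\; \sigma([b, \sigma^{-1} h]) \;=\; \sigma(\lambda_k(b)(\sigma^{-1} h)) \;=\; (\sigma \cdot \lambda_k(b))(h),
\]
whose middle equality is just the fact that $\sigma$ acts on $L(H)$ as a Lie algebra automorphism. Localizing at $p$ yields the statement for $\lambda_k \otimes \Z_{(p)}$.

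The only point requiring real attention is the hypothesis $p \neq 2$. It plays no role in the computation above, but I expect it is needed once one transports the picture through the symplectic identification $\Hom(H_{(p)}, -) \cong H_{(p)} \otimes -$ coming from Poincar\'e duality (as in the discussion preceding \eqref{seq63}): the antisymmetry of the symplectic form $\omega$ introduces a factor of $2$ when converting between the $\Hom$-description and the $\otimes$-description, which must be invertible in $\Z_{(p)}$ for that identification to remain $\Sp(H_{(p)})$-equivariant on the nose. This is pure bookkeeping and does not affect the core argument, which is simply that Lie brackets are equivariant under the induced action of any automorphism of $H$.
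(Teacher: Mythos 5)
Your core computation is correct, and it is a genuinely different --- and more conceptual --- route than the paper's. The paper fixes a symplectic basis, invokes the generation of $\Sp(H_{(p)})$ by transvections (this is where its hypothesis $p\neq 2$ does its work: the reference \cite{Ch} is precisely about the delicate dyadic case), and then verifies the identity $h\cdot\bigl(\sum_i e_i^*\otimes[e_i,\ell]\bigr)=\sum_i e_i^*\otimes[e_i,h\cdot\ell]$ by expanding a single transvection term by term. Your argument replaces all of this with two observations: the conjugation action on the kernel of \eqref{seq4} is well defined by centrality and equals the natural action $(\sigma\cdot f)(h)=\sigma(f(\sigma^{-1}h))$, and the graded bracket $\mathcal{L}_k\times H\to\mathcal{L}_{k+1}$ is functorial in $H$. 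This needs no basis, no transvections, and no restriction on $p$, and it localizes for free.

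The one point you must handle with care is the apparent contradiction with the sentence immediately preceding the lemma, where the paper asserts that $\lambda_k$ is \emph{not} $\GL(H)$-equivariant, while you claim $\GL$-equivariance ``for free''. Both statements are right, but about different actions on the target. With the natural (= conjugation) action on $\Hom(H,\mathcal{L}_{k+1})$, your computation does give full $\GL$-equivariance. The paper, however, systematically presents the target as $H_{(p)}\otimes(\mathcal{L}_{k+1}\otimes\Z_{(p)})$ via the symplectic duality $H_{(p)}\cong H_{(p)}^*$ --- this is the form in which the decompositions \eqref{777111} and \eqref{ppo} and Lemma \ref{lem49} are stated --- and equips it with the diagonal standard action; since $v\mapsto\langle v,\cdot\rangle$ intertwines the standard and contragredient actions only on $\Sp(H_{(p)})$, the resulting formula $\ell\mapsto\sum_i e_i\otimes[e_i,\ell]$ is only $\Sp$-equivariant, and that is the statement the paper actually uses downstream. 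So to land the lemma in usable form you should add the one-line, standard remark that the symplectic identification $H_{(p)}\cong H_{(p)}^*$ is $\Sp(H_{(p)})$-equivariant. Your final paragraph gestures at exactly this, but attributes the issue to a factor of $2$; no factor of $2$ enters the duality itself, and in the paper the hypothesis $p\neq2$ is consumed by the transvection-generation step that your proof avoids altogether. With that bridging remark added, your proof is complete and strictly simpler than the paper's.
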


Finally, we are now in a position to give the proofs of Proposition \ref{jj2} and Lemma \ref{ii92280}. The reader may skip these proofs on a first reading.

\begin{proof}
[Proof of Proposition \ref{jj2}] The proof proceeds by induction on $k$.
When $k = 1$, the statement follows immediately from the inclusion $\GL(H)= \Out(F/F_1) \hookrightarrow \Out(F/F_1 \otimes \Q )= \GL(H_{(0)}) $. 
Now assume that $\Out(N_k) \to \Out(N_k \otimes \Q)$ is injective for some $k $.
By \eqref{seq48}, we obtain the following commutative diagram:
\[{\normalsize
\xymatrix{ 
0 \ar[r] & \Coker ( \lambda_k ) \ar[r] \ar[d]
& \Out(N_{k+1}) \ar[r] \ar[d]& \Out(N_{k}) \ar[r] \ar@{^{(}->}[d] & 0 & (\mathrm{central \ extension})\\
0 \ar[r] & \Coker ( \lambda_k \otimes \Q ) \ar[r]
& \Out(N_{k+1}\otimes \Q) \ar[r]& \Out(N_{k}\otimes \Q) \ar[r]& 0 & (\mathrm{central \ extension}),
}}
\]
where the vertical maps are the localizations at 0. 
If we can show that the image of $\lambda_k : \mathcal{L}_{k} \lra H \otimes \mathcal{L}_{k+1}$ is a direct summand as a $\Z$-module,
the induced map $\Coker ( \lambda_k ) \ra \Coker ( \lambda_k \otimes \Q) $ is injective; 
Thus, by diagram chasing, the middle vertical map turns out to be injective, as required.

It suffices to analyze the image of $\lambda_k$ as follows.
Let $\mathfrak{p}_{2g} \subset \Hom(H, \mathfrak{L}) \oplus \Hom(\mathfrak{L}, \mathfrak{L})$ denote the tangential derivation algebra of the free Lie algebra $\mathfrak{L}$, and let $\mathfrak{p}_{2g}(k)$ be its homogeneous submodule of degree $k$.
Then, it is known (see, e.g., \cite[\S 3]{Satoh3}) that $\mathfrak{p}_{2g}(k)$ is a direct summand of $H \otimes \mathcal{L}_{k+1}$,
and that the correspondence $e_i \otimes h \mapsto [e_i, h]$ induces an $\mathfrak{S}_{2g}$-equivariant $\Z$-module isomorphism $H \otimes \mathfrak{L}_{k}  \cong \mathfrak{p}_{2g}(k)$.
Notice that the homomorphism $ \mathfrak{L}_{k} \ra H \otimes \mathfrak{L}_{k}; a \mapsto \sum_{i=1}^{2g }e_i \otimes a$ is a splitting injection; hence, the image of $\lambda_k$ is itself a direct summand, as required.
\end{proof}

\begin{proof}
[Proof of Lemma \ref{ii92280}] Let $J$ be a $(2g \times 2g)$ matrix of the form $\begin{pmatrix}0 & -1 \\1 & 0 \end{pmatrix} \oplus \cdots \oplus \begin{pmatrix}0 & -1 \\1 & 0 \end{pmatrix}.$ Suppose that $e_1, \dots, e_{2g}$ is a symplectic basis of $H_{(p)}$ with respect to the intersection form $\langle v,w\rangle= \ ^t v J w $ where $v,w \in H_{(p)}$. For any $ h \in \Sp(H_{(p)}) $, it is sufficient to show $ h \cdot (\sum_{i=1}^{2g} e_i^* \otimes [e_i, \ell ] )= \sum_{i=1}^{2g} e_i^* \otimes [e_i, h \cdot \ell ] $ for any $\ell \in \mathcal{L}_{k+1}$. Since the symplectic group $\Sp(H_{(p)}) $ is generated by transvections (see the Introduction of \cite{Ch}), we may suppose the existence of $a \in \Z_{(p)}, w \in H_{(p)}$ such that $h \cdot v = v + a \langle v,w \rangle w$ for any $v \in H_{(p)}$. Expand $w$ as $\sum_{j=1}^{2g}w_j e_j$ for some $w_j \in \Z_{(p)}$. Then, we have
\[e_{2j}^*\cdot h = (e_{2j} J h J)^* = e_{2j}^* + a \langle e_{2j-1},w \rangle w J= e_{2j}^* - a w_{2j} J w^*\]
and $(e_{2j-1} \cdot h )^* = e_{2j-1}^* - a w_{2j-1} (J w)^* $. Therefore, we obtain the desired equality as follows:
\[ h \cdot (\sum_{i=1}^{2g} e_i^* \otimes [e_i, \ell ] ) =
\sum_{i=1}^{2g} h \cdot e_i^* \otimes [h \cdot e_i, h \cdot \ell]  \]
\[ = \sum_{i=1}^{2g} (e_i^* - a w_i J w^*) \otimes [ e_i + a \langle e_i,w \rangle w , h \cdot \ell] \]
\[ = \sum_{i=1}^{2g} \bigl(e_i^* \otimes [e_i, h \cdot \ell ] - J w^* \otimes [a w_i e_i, h \cdot \ell ] \bigr)+\sum_{i=1}^{g} a \bigl(e_{2i-1}^* \otimes w_{2i}[w, h \cdot \ell] -e_{2i}^* \otimes w_{2i-1}[w, h \cdot \ell] \bigr) \]
\[
+ \sum_{i=1}^{g} a ^2 \bigl(w_{2i} J w ^* \otimes w_{2i-1}[w, h \cdot \ell] - w_{2i-1} J w ^* \otimes w_{2i}[w, h \cdot \ell] \bigr) \]
\[ = \sum_{i=1}^{2g} \bigl(e_i^* \otimes [e_i, h \cdot \ell ] \bigr) -a J w^* \otimes [w, h \cdot \ell ] +a J w^* \otimes [w, h \cdot \ell ] + 0 = \sum_{i=1}^{2g} e_i^* \otimes [e_i, h \cdot \ell ] .\]
\end{proof}

\section{Observations of $\mathrm{SpOut}(N_k \otimes \Z_{(p)})$ with $k=2,3$}
\label{99sec}
According to Theorem \ref{ii98788}, it is important to analyze quantitatively the conjugacy classes of $\mathrm{Out}(N_k \otimes \Z_{(p)})$ and $\mathrm{Aut}_0(N_k \otimes \Z_{(p)})$.

As examples with $k=2$ and $k=3$, this section will analyze $\mathrm{SpOut}(N_k \otimes \Z_{(p)}) $ in detail. We will see that the Enomoto-Satoh trace \cite{ES, Sa1} is applicable to our construction (Section \ref{ooi23}). Throughout this section, we suppose $p \neq 2$ and $p\neq 3$, and let $ \GL(H_{(p)} )$ act on $\Hom( H_{(p)}, \mathcal{L}_k \otimes \Z_{(p)})$ naturally by
\begin{equation}\label{seq8}
(Af)(u) := Af(A^{-1} u) \ \ \ \ \ \ \ (A \in \GL( H_{(p)})),
\end{equation}
for $f \in \Hom( H_{(p)}, \mathcal{L}_k \otimes \Z_{(p)})$ and $u \in H_{(p)}$.

\subsection{Example 1: $\mathrm{SpOut}(N_k \otimes \Z_{(p)}) $ with $k=2$}
\label{93883}
Before we describe $\Out( N_2 \otimes \Z_{(p)})$, we will review an observation in \cite[Page 204]{Mo}. Let $H_{(p)} \wedge H_{(p)} $ be the exterior square of $H_{(p)} $. Let $\mathcal{G}_2$ be $(H_{(p)} \wedge H_{(p)}) \times H_{(p)} $ with the group operation,
\[ ( a,v) \cdot ( b,w) := (\frac{1}{2}v\wedge w + a +b,v+w ), \ \ \ \ \ \ \ (v,w \in H_{(p)} , \ a,b \in H_{(p)} \wedge H_{(p)}).\]
Then, $N_2\otimes \Z_{(p)} \cong \mathcal{G}_2$ is known (see \cite{Mo}). Consider the action of $(f,A) \in \Hom( H_{(p)}, \Lambda^2 H_{(p)}) \rtimes \GL(H_{(p)} ) $ on $\mathcal{G}_2$ defined by
\[ (f,A)(\xi,u):= (2f(Au)+ A \xi , Au ) \ \ \ \ \ ((\xi,u) \in \mathcal{G}_2 ) . \]
\begin{prop}\label{312aa}
Let $p \neq 2$. 
The homomorphism $\Xi: \Hom( H_{(p)}, \Lambda^2 H_{(p)}) \rtimes \GL(H_{(p)} ) \ra \Aut(N_2 \otimes \Z_{(p)})$ induced from this action is an isomorphism.
 \end{prop}   
\begin{proof} 
Under the isomorphism $N_2\otimes \Z_{(p)} \cong \mathcal{G}_2 $,
the restriction of $\Xi$ coincides with $\iota \otimes \Z_{(p)}$ in \eqref{seq5}, and therefore it is an isomorphism onto $I(N_2 \otimes \Z_{(p)})$. By definition, $\Xi$ is $\GL(H_{(p)})$-equivariant, and hence its restriction to the semidirect products must also be an isomorphism.
 \end{proof}

Next, we will give decompositions of $\Hom( H_{(p)}, \Lambda^2 H_{(p)}) $, and discuss normal subgroups of $\Aut(N_2 \otimes \Z_{(p)}) $. Consider the $\Sp$-equivariant inclusion $I_3 : \Lambda^3 H_{(p)} \hookrightarrow \Hom( H_{(p)}, \Lambda^2 H_{(p)}) $ given by
\[ I_3( v \wedge w \wedge u ) (z)= \langle z , w\rangle u\wedge v + \langle z , u\rangle v\wedge w+\langle z , v\rangle w\wedge u,\]
where $v,u,w,z \in H_{(p)}$ and the symbol $\langle , \rangle $ means the intersection form. Notice that $\Lambda^3 H_{(p)}$ contains an $\Sp$-submodule of the form $ \{ h \wedge (\sum_{i=1}^g e_{2i-1}\wedge e_{2i} ) \ | \ h \in H_{(p)}\} $ if $g>1$. Furthermore, as a cokernel of $I_3$, let us consider the subspace of $H_{(p)}\otimes \Lambda^2 H_{(p)} $ generated by $ u \otimes( v\wedge w)- v\otimes ( w\wedge u)$ for $u,v,w \in H_{(p)}$. That is,
\begin{equation}\label{77711}
\mathfrak{t}= \langle \ \ u \otimes( v\wedge w)- v\otimes ( w\wedge u) \ \ | \ u,v,w \in H_{(p)} \ \rangle \subset H_{(p)}\otimes \Lambda^2 H_{(p)}.
\end{equation}
Here, $\dim (\mathfrak{t}) = (8g^3 -2g)/3 $ is known. In summary, by the symplectic Littlewood-Richardson rule (see \cite{KT}) in the context of the Young tableau, we have a decomposition of $\Sp$-representations:
\begin{equation}\label{777111}
\Hom( H_{(p)}, \Lambda^2 H_{(p)}) \cong H_{(p)} \otimes \Lambda^2 H_{(p)} \cong
\Lambda^3 H_{(p)} \oplus \mathfrak{t} \cong ( [1^3] \oplus [1]) \oplus ([2, 1] \oplus [1]),
\end{equation}
if $g>1$. Recall from Lemma \ref{ii92280} that the image $\mathrm{Im}(\lambda_1\otimes \Z_{(p)})$ is an $\Sp$-representation subspace; we have

\begin{lem}\label{lem49}
The image of $\lambda_1 \otimes \Z_{(p)}$ is contained in the subspace $\mathfrak{t}$. Namely, $\Im ( \lambda_1 \otimes \Z_{(p)}) $ is the last term $[1]$ in \eqref{777111}.
\end{lem}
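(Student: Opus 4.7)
The plan is to combine the $\Sp$-equivariance of $\lambda_2\otimes\Z_{(p)}$ (Lemma \ref{ii92280}) with a direct calculation inside the ambient space $H_{(p)}\otimes\Lambda^2 H_{(p)}$. First I would use Poincar\'e duality to identify $\Hom(H_{(p)},\Lambda^2 H_{(p)})$ with $H_{(p)}\otimes\Lambda^2 H_{(p)}$, so that both $\Im(\lambda_2\otimes\Z_{(p)})$ and the subspace $\mathfrak{t}$ live in a common vector space. By Lemma \ref{ii92280} the image is an $\Sp$-submodule, and since $\lambda_2$ is injective (the bracket $[v,-]$ in the free Lie algebra is nonzero for $v\neq 0$), that submodule is $\Sp$-isomorphic to the source, which as an $\Sp$-representation is $[1]$.

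Next I would write out $\lambda_2(v)$ in a symplectic basis $e_1,\ldots,e_{2g}$. The defining formula expresses $\lambda_2(v)$ in $H_{(p)}^{*}\otimes\Lambda^2 H_{(p)}$ as $\sum_i e_i^{*}\otimes(v\wedge e_i)$; translating each $e_i^{*}$ through the symplectic identification $H^{*}\cong H$ as $\pm e_{\sigma(i)}$ (with $\sigma$ the involution pairing $2i-1\leftrightarrow 2i$), one obtains an explicit element of $H_{(p)}\otimes\Lambda^2 H_{(p)}$. The key calculational step is then to regroup this sum as a linear combination of the defining generators $u\otimes(v'\wedge w)-v'\otimes(w\wedge u)$ of $\mathfrak{t}$, which establishes $\Im(\lambda_2\otimes\Z_{(p)})\subset\mathfrak{t}$.

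Once the inclusion is verified, the identification with the last term $[1]$ in \eqref{777111} is immediate from representation theory: since $\mathfrak{t}\cong [2,1]\oplus[1]$ has its $[1]$-isotypic component of multiplicity one, any $\Sp$-submodule of type $[1]$ inside $\mathfrak{t}$ must coincide with that summand. The main obstacle will be the explicit rearrangement in the second step: one must juggle the sign convention of the symplectic form, the direction of the Poincar\'e duality isomorphism, and the antisymmetry of $\Lambda^2$ at once, and show that the cross-terms collect into the anti-cyclic combinations $u\otimes(v'\wedge w)-v'\otimes(w\wedge u)$ that characterize $\mathfrak{t}$. The hypothesis $p\neq 2,3$ is precisely what keeps the resulting coefficients invertible in $\Z_{(p)}$.
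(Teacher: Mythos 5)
Your plan matches the paper's proof: both write $\lambda(b)$ out in a symplectic basis as an element of $H_{(p)}\otimes\Lambda^2 H_{(p)}$ and then exhibit it as a combination of the defining generators $u\otimes(v\wedge w)-v\otimes(w\wedge u)$ of $\mathfrak{t}$, with the identification of the image as the $[1]$-summand following by multiplicity one. The paper makes the regrouping step concrete by taking $u=e_{2\ell}$, $v=e_{2\ell-1}$, $w=e_j$ so that the basis vector $\sum_{\ell}e_{2\ell-1}\otimes(e_{2\ell}\wedge e_j)+e_{2\ell}\otimes(e_{2\ell-1}\wedge e_j)$ of the image is visibly a sum of such generators.
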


\begin{proof}
Let $e_1, \dots, e_{2g} \in H_{(p)}$ be a standard basis of $H_{(p)}$. We can verify that the image of $\lambda_1$ is given by $ \{ f_b \in \Hom( H_{(p)}, \Lambda^2 H_{(p)}) | b \in H_{(p)} \}$, where $f_b$ sends $a \in H_{(p)}$ to $a \wedge b \in \Lambda^2 H_{(p)}= \mathcal{L}_2 \otimes \Z_{(p)}$. Hence, the image $\mathrm{Im}(\lambda_1\otimes \Z_{(p)})\subset H_{(p)} \otimes \Lambda^2 H_{(p)} $ has a basis $ \sum_{\ell=1}^g e_{2 \ell -1} \otimes (e_{2 \ell} \wedge e_j) + e_{2 \ell } \otimes (e_{2 \ell -1} \wedge e_j)$ with $ 1\leq j \leq n$. By applying $u=e_{2 \ell}, v =e_{2 \ell -1 },w=e_j$ to \eqref{77711}, we can readily see that the basis is contained in $ \mathfrak{t}$. Hence, we have $\mathrm{Im}(\lambda_1 \otimes \Z_{(p)}) \subset \mathfrak{t}$, as required.
\end{proof}

Notice the classical fact (see, e.g., \cite{DJ}) that the kernel $ \Qer([,])$ in \eqref{seq63} is isomorphic to $ \cong [1^3] \oplus [1] $ and contains $ \Lambda^3 H_{(p)} $ in \eqref{777111}. To summarize, we determine $\mathrm{SpOut}( N_2 \otimes \Z_{(p)}) $ as follows:
\begin{prop}\label{tuki}
If $g=1$, then $\mathrm{SpOut}( N_2 \otimes \Z_{(p)}) $ and $ \mathrm{SpOut}_0( N_2 \otimes \Z_{(p)})$ are isomorphic to $\Sp(H_{(p)})$. If $g>1,$ there are group isomorphisms
\[\mathrm{SpOut}( N_2 \otimes \Z_{(p)}) \cong ([1^3] \oplus [1]\oplus [2,1]) \rtimes \Sp(H_{(p)}), \ \ \ \ \ \mathrm{SpOut}_0( N_2 \otimes \Z_{(p)}) \cong ([1^3] \oplus [1]) \rtimes \Sp(H_{(p)}). \]
\end{prop}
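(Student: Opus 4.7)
The plan is to leverage the explicit description $\mathrm{SpAut}(N_2\otimes\Z_{(p)})\cong \Hom(H_{(p)},\Lambda^2 H_{(p)})\rtimes \Sp(H_{(p)})$ that results from restricting the splitting in \eqref{seq6} to $\Sp(H_{(p)})\subset \GL(H_{(p)})$ (which holds for $p\neq 2,3$), and then to mod out by the relevant subgroups using the $\Sp$-irreducible decomposition \eqref{777111}.

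For $\mathrm{SpOut}(N_2\otimes\Z_{(p)})$, we identify $\mathrm{Inn}(N_2\otimes\Z_{(p)})=\mathrm{Im}(\lambda_2\otimes\Z_{(p)})$ from the commutative diagram \eqref{seq48}; the subgroup is $\Sp$-invariant by Lemma \ref{ii92280}. When $g>1$, Lemma \ref{lem49} places this image as the last $[1]$-summand sitting inside $\mathfrak{t}$ in \eqref{777111}, so quotienting yields
\[
\mathrm{SpOut}(N_2\otimes\Z_{(p)})\cong \bigl(\Hom(H_{(p)},\Lambda^2 H_{(p)})/\mathrm{Im}(\lambda_2\otimes\Z_{(p)})\bigr)\rtimes \Sp(H_{(p)})\cong ([1^3]\oplus[1]\oplus[2,1])\rtimes \Sp(H_{(p)}).
\]
When $g=1$, the one-dimensionality of $\Lambda^2 H_{(p)}$ makes $\lambda_2\otimes\Z_{(p)}\colon H_{(p)}\to \Hom(H_{(p)},\Lambda^2 H_{(p)})\cong H_{(p)}$ a bijection, so the quotient vanishes and $\mathrm{SpOut}(N_2\otimes\Z_{(p)})\cong \Sp(H_{(p)})$.

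For $\mathrm{SpOut}_0$, the strategy is to combine the description $\mathrm{SpAut}_0(N_2\otimes\Z_{(p)})\cap \Hom(H_{(p)},\Lambda^2 H_{(p)})=\Qer([,])$ coming from \eqref{seq63} with the identification $\Qer([,])=I_3(\Lambda^3 H_{(p)})\cong [1^3]\oplus[1]$ recalled just after \eqref{777111}. Since the splitting of \eqref{seq6} places $\Sp(H_{(p)})$ inside $\mathrm{SpAut}_0$ (because the class of $\zeta$ in $\Lambda^2 H_{(p)}$ represents the symplectic form and is hence preserved by $\Sp$), this yields $\mathrm{SpAut}_0(N_2\otimes\Z_{(p)})\cong \Lambda^3 H_{(p)}\rtimes \Sp(H_{(p)})$. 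In the decomposition \eqref{777111}, the summands $\Lambda^3 H_{(p)}$ and $\mathfrak{t}$ are complementary; thus $\mathrm{Inn}\subset \mathfrak{t}$ while $\mathrm{SpAut}_0\cap\Hom\subset \Lambda^3 H_{(p)}$, forcing $\mathrm{Inn}\cap \mathrm{SpAut}_0=0$. Consequently, the image of $\mathrm{SpAut}_0$ in $\mathrm{SpOut}$ is isomorphic to $\mathrm{SpAut}_0$ itself, producing $\mathrm{SpOut}_0\cong ([1^3]\oplus[1])\rtimes \Sp(H_{(p)})$ for $g>1$; for $g=1$, $\Lambda^3 H_{(p)}=0$ collapses this to $\Sp(H_{(p)})$.

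The main obstacle I anticipate is the $\Z_{(p)}$-version of the identity $\mathrm{SpAut}_0\cap \Hom=\Qer([,])$ used above: the statement following \eqref{seq63} and the remark after it are given only for $\Q$-coefficients via \cite[Theorem 3.3]{Mo2}. Promoting this to $\Z_{(p)}$ for $p\neq 2,3$ requires unwinding the definition $\mathrm{Aut}_0(N_2\otimes\Z_{(p)})=p_3(\mathrm{Aut}_0(N_3\otimes\Z_{(p)})')$ through the central extension \eqref{seq5}, and verifying that the condition $\phi(\iota_3(\zeta))=\iota_3(\zeta)$ translates, at the infinitesimal level in $\Hom(H_{(p)},\mathcal{L}_3\otimes\Z_{(p)})$, into membership in the kernel of the $\Sp$-equivariant bracketing $[,]\colon H_{(p)}\otimes\mathcal{L}_2\otimes\Z_{(p)}\to \mathcal{L}_3\otimes\Z_{(p)}$.
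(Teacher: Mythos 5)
Your argument is correct and follows essentially the same route as the paper: the paper's own justification of Proposition \ref{tuki} is precisely the discussion preceding it in Section \ref{93883} (the splitting \eqref{seq6} restricted to $\Sp$, Lemma \ref{lem49} identifying $\mathrm{Inn}(N_2\otimes\Z_{(p)})=\Im(\lambda_2\otimes\Z_{(p)})$ with the last $[1]$ inside $\mathfrak{t}$, and the identification $\Qer([,])\cong[1^3]\oplus[1]$ for the $\mathrm{SpOut}_0$ statement), which is exactly what you assemble. The obstacle you flag at the end — that \eqref{seq63} is only justified over $\Q$ while the proposition is stated over $\Z_{(p)}$ — is a genuine gap that the paper itself does not address, so your treatment is no less complete than the original.
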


\subsection{Example 2: $\SpOut( N_k \otimes \Z_{(p)})$ with $k=3$}
\label{9723}
The structure of $\Aut( N_3 \otimes \Z_{(p)})$ when $p=0$ is discussed in \cite{Mo4}. However, even if $p>4,$ we will explicitly express the group structures of $N_3 \otimes \Z_{(p)} $ and $\Aut( N_3 \otimes \Z_{(p)})$ and make an observation about $\SpOut( N_3 \otimes \Z_{(p)})$.

Let us introduce a bilinear map $\nu : H_{(p)}\times \Lambda^2 H_{(p)} \ra H_{(p)}\otimes \Lambda^2 H_{(p)}$ defined by
\begin{equation}\label{888}
\nu (u, v\wedge w) = \frac{2}{3} u\otimes ( v\wedge w) -\frac{1}{3}v\otimes ( w\wedge u) - \frac{1}{3}w\otimes ( u\wedge v).
\end{equation}
Recall the subspace $\mathfrak{t}$ of $H_{(p)} \otimes \Lambda^2 H_{(p)} $ in \eqref{77711}. For $f \in \Hom( H_{(p)}, \Lambda^2 H_{(p)} ) $, let us define a linear map $\Upsilon_{f} : \Lambda^2 H_{(p)} \ra \mathfrak{t} $ by
\begin{equation}\label{7771}
\Upsilon_{f}(u \wedge v):= \nu(u, f(v))- \nu(v, f(u ) ).
\end{equation}
Then, we can define the group structure on $\Hom( H_{(p)}, \mathfrak{t} ) \times \Hom( H_{(p)}, \Lambda^2 H_{(p)}) \times \GL(H_{(p)}) $ by
\begin{equation}\label{777}
(F',f', A') (F,f, A):= (F'+ A' F+ \Upsilon_{f'}(A'f( \bullet)) -\Upsilon_{A' f}(f' ( \bullet)) , f'+A'f, A'A).
\end{equation}

\begin{prop}\label{7787}
Let $L_3 $ be the product $\mathfrak{t} \times \Lambda^2 H_{(p)} \times H_{(p)} $. For $(\alpha, a, v), (\beta,b,w) \in L_3$, we define the binary operation $(\alpha, a, v) \cdot (\beta,b,w)$ on $L_3$ by
\begin{equation}\label{777122}
( \alpha + \beta +
\frac{1}{2}\nu( v,b) -\frac{1}{2}\nu(w,a)+ \frac{1}{12} \nu( v,v \wedge w) +\frac{1}{12} \nu( w,w \wedge v), a+b + \frac{1}{2} v \wedge w, v+w).
\end{equation}
Consider the action of the group \eqref{777} on $L_3$ defined by
\begin{equation}\label{7771222}
(F,f, A) \cdot (\alpha, a, v) :=
\bigl(2F(Av)+A \alpha + \Upsilon_{f}( 2f(Av) + 2Aa) , 2f(Av)+Aa ,Av \bigr).
\end{equation}
Then, the operation \eqref{777122} gives rise to a group structure in $L_3$, and the operation \eqref{7771222} preserves the group operation. Furthermore, the group $L_3$ is isomorphic to $N_3 \otimes\Z_{(p)} $, and the homomorphism
\[\mu : \Aut( N_3 \otimes \Z_{(p)}) \lra \Hom( H_{(p)}, \mathfrak{t} ) \times \Hom( H_{(p)}, \Lambda^2 H_{(p)}) \times \GL(H_{(p)}) \]
induced by \eqref{7771222} is a group isomorphism.
\end{prop}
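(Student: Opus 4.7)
The plan is to model $N_3\otimes\Z_{(p)}$ via Malcev--Baker--Campbell--Hausdorff on the truncated free Lie algebra $\mathfrak{l}:=H_{(p)}\oplus(\mathcal{L}_2\otimes\Z_{(p)})\oplus(\mathcal{L}_3\otimes\Z_{(p)})$. Since $p\neq 2,3$, the BCH formula truncated to degree three,
\[\log(e^xe^y)=x+y+\tfrac12[x,y]+\tfrac1{12}\bigl([x,[x,y]]+[y,[y,x]]\bigr),\]
defines a group law on $\mathfrak{l}$, and Malcev's theorem identifies this group with $N_3\otimes\Z_{(p)}$. Under the standard identifications $\mathcal{L}_2\otimes\Z_{(p)}\cong\Lambda^2 H_{(p)}$ and $\mathcal{L}_3\otimes\Z_{(p)}\cong\mathfrak{t}$ (the latter via the iterated bracket $H_{(p)}\otimes\Lambda^2 H_{(p)}\to\mathcal{L}_3\otimes\Z_{(p)}$, which is surjective with kernel $\Lambda^3 H_{(p)}$ by Jacobi and hence restricts to an $\Sp$-equivariant isomorphism on $\mathfrak{t}$ thanks to the splitting \eqref{777111}), the underlying set of $\mathfrak{l}$ becomes $L_3$, and the map $\nu$ of \eqref{888} is precisely the projection of $u\otimes(v\wedge w)$ onto $\mathfrak{t}$ along $\Lambda^3 H_{(p)}$.

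With this setup I would read off \eqref{777122} directly from BCH. Writing $x=(\alpha,a,v)$ and $y=(\beta,b,w)$, the degree-one and degree-two contributions immediately yield the last two coordinates of the product; at degree three, $\tfrac12[x,y]$ produces $\tfrac12[v,b]-\tfrac12[w,a]$, which projects to $\tfrac12\nu(v,b)-\tfrac12\nu(w,a)$, while the iterated brackets contribute $\tfrac1{12}\nu(v,v\wedge w)+\tfrac1{12}\nu(w,w\wedge v)$. This matches \eqref{777122} and incidentally confirms that the map $L_3\to N_2\otimes\Z_{(p)}$ dropping the $\mathfrak{t}$-coordinate is a group homomorphism.

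For $\mu$, my approach is to combine the localized central extension \eqref{seq5} with $k=2$ with the description $\Aut(N_2\otimes\Z_{(p)})\cong\Hom(H_{(p)},\Lambda^2 H_{(p)})\rtimes\GL(H_{(p)})$ from Section \ref{93883}. As a set this gives $\Aut(N_3\otimes\Z_{(p)})\cong\Hom(H_{(p)},\mathfrak{t})\times\Hom(H_{(p)},\Lambda^2 H_{(p)})\times\GL(H_{(p)})$. To recover the action: a lift of $(f,A)$ from $N_2$ to $N_3$ sends $(\alpha,a,v)$ to a triple whose last two coordinates are $(2f(Av)+Aa,Av)$ (the factor $2$ inherited from Section \ref{93883}), while the first coordinate is $A\alpha$ plus a correction forced by compatibility with \eqref{777122}; a short computation shows that this correction is exactly $\Upsilon_f(2f(Av)+2Aa)$. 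Adding the tautological translation by $F\in\Hom(H_{(p)},\mathfrak{t})$ on the $\mathfrak{t}$-factor produces the full action \eqref{7771222}, and the cocycle $\Upsilon_{f'}(A'f(\bullet))-\Upsilon_{A'f}(f'(\bullet))$ in \eqref{777} measures the difference between composing lifts in the two possible orders.

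The main technical burden will be the direct verification of associativity of \eqref{777} together with the action axiom for \eqref{7771222} against \eqref{777122}. Both reduce to multilinear identities in $\nu$ and $\Upsilon_f$ modulo Jacobi, which can be checked on a symplectic basis of $H_{(p)}$. Once these identities are established, injectivity of $\mu$ follows because any nontrivial triple $(F,f,A)$ acts nontrivially on at least one of the linear, quadratic, or cubic layers of $L_3$, and surjectivity is immediate from the layer-by-layer bijectivity provided by \eqref{seq5}--\eqref{seq6}.
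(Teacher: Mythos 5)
Your approach is correct, and for the identification $N_3\otimes\Z_{(p)}\cong L_3$ it is genuinely different from the paper's. The paper simply defines $\psi\colon F\to L_3$ by $f_i\mapsto(0,0,e_i)$, checks that this induces a map on $N_3\otimes\Z_{(p)}$, and then invokes the five lemma twice (once on the central extension $0\to\mathfrak{t}\to L_3\to\Lambda^2 H_{(p)}\times H_{(p)}\to 1$, once on \eqref{seq5}); it leaves the verification that \eqref{777122} and \eqref{7771222} are actually a group law and an action ``for the reader.'' Your Malcev--BCH route instead \emph{derives} the formula: on the truncated free Lie algebra $H_{(p)}\oplus\mathcal{L}_2\oplus\mathcal{L}_3$ (over $\Z_{(p)}$ with $p\neq2,3$, so the denominators $2$ and $12$ are invertible) the degree-$\leq 3$ BCH law is automatically associative, and since $\nu$ is exactly the projection $H_{(p)}\otimes\Lambda^2 H_{(p)}\to\mathfrak{t}$ along $\Lambda^3 H_{(p)}$, the three degree-3 terms of BCH transcribe term by term into \eqref{777122}. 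This buys you a conceptual explanation of the coefficients $\tfrac12,\tfrac1{12}$ and removes the ``elementary but complicated'' verification the paper postpones, at the cost of invoking the Malcev correspondence over $\Z_{(p)}$ (a slightly heavier tool than the five lemma). For the second half your argument essentially coincides with the paper's: both reduce the bijectivity of $\mu$ to the central extension \eqref{seq5} and the known description of $\Aut(N_2\otimes\Z_{(p)})$, with the cocycle in \eqref{777} identified as the failure of the lifts to commute. One small caveat: where you write that you would still need to check associativity of \eqref{777} and the action axiom ``on a symplectic basis,'' note that only a basis of $H_{(p)}$ is needed and no symplectic structure enters; $\GL(H_{(p)})$, not just $\Sp$, acts at this stage.
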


\begin{proof}
Although it is a bit complicated to check the former part, the verification is elementary. So, we will leave it for the reader to verify. Instead, we will show $ N_3 \otimes \Z_{(p)} \cong L_3$. Let $f_1, \dots, f_{2g}$ be a basis of $F$, and define a homomorphism $\psi: F \ra L_3 $ by $\psi(f_i) =(0,0,e_i)$. Then, we can easily verify that $\phi$ induces $\Phi: N_3 \otimes \Z_{(p)} \ra L_3 $, since the quotient $ \Phi $ modulo $F_4 \otimes \Z_{(p)} $ gives an isomorphism $N_3 \otimes\Z_{(p)} \cong \Lambda^2 H_{(p)} \times H_{(p)}$ and the restriction of $\Phi $ on $ F_2/F_3 \otimes \Z_{(p)}$ gives an isomorphism $F_2/F_3 \otimes \Z_{(p)} \cong \mathfrak{t}$; we can easily notice that the projection $L_3 \ra \Lambda^2 H_{(p)} \times H_{(p)}$ is a central extension. Therefore, $\Phi$ is a group isomorphism by the five lemma.

Finally, we will show that $\mu $ is an isomorphism. We can easily see that the quotient of $ \mu$ modulo $ \Hom( H_{(p)}, \mathfrak{t} )$ is an isomorphism, and the restriction of $\mu $ on $\Hom( H_{(p)}, \mathfrak{t} ) \times \{0\} \times \{1\} $ is a linear isomorphism by the definitions of $ \iota $ and \eqref{777}. Hence, $\mu$ is an isomorphism by the central extension in \eqref{seq5} and the five lemma.
\end{proof}

Next, we will examine the image of $\lambda_3 \otimes \Z_{(p)} $ (Lemma \ref{lem491} below). Thanks to the Littlewood-Richardson rule again, if $g>1$ and $p \geq 7$, we have a decomposition of $\Sp$-representations,
\[ \Hom( H_{(p)}, \mathfrak{t} ) \cong H_{(p)} \otimes \mathfrak{t}
\cong ([1] \otimes [2, 1]) \oplus ( [1] \otimes [1] ) \]
\begin{equation}\label{ppo}\cong ( [2, 1, 1] \oplus [2^2] \oplus [3,1] \oplus [2]\oplus [1^2]) \oplus ( [2] \oplus [1^2] \oplus [0] ). \end{equation}
If $g=1$, then $\Hom( H_{(p)}, \mathfrak{t} ) \cong [1] \otimes [1] \cong [0] \oplus [2]. $

\begin{lem}\label{lem491}
For $s,t \leq 2g$, let $\gamma_{s,t}$ be the linear map $ H_{(p)} \ra \mathfrak{t} $ which sends $e_x$ to $\nu( e_x, e_s \wedge e_t)$. Let $R_3$ be the subspace of $\Hom( H_{(p)}, \mathfrak{t})$ spanned by $\gamma_{s,t}$ with $s<t \leq 2g$.

Then, $ R_3$ is equal to the image of $ \lambda_2$. Furthermore, $ R_3$ is isomorphic to $[1^2]\oplus [0]$ as an $\Sp$-representation and contains the image of the map
\[\Im ( \lambda_2 \otimes \Z_{(p)} ) \times \Im ( \lambda_2 \otimes \Z_{(p)} ) \ra \Hom(H_{(p)}, \mathfrak{t}) ; \ \ (f,f') \longmapsto \Upsilon_{f'}(f( \bullet))- \Upsilon_{f}(f'( \bullet)). \]

In particular, the inner automorphism group $ \mathrm{Inn}(N_3 \otimes \Z_{(p)})$ corresponds to the subset
\[ R_3 \times [1] \times \{ \mathrm{id}_{H_{(p)}} \} \subset \Hom( H_{(p)}, \mathfrak{t} ) \times \Hom( H_{(p)}, \Lambda^2 H_{(p)}) \times \GL(H_{(p)}).\]
\end{lem}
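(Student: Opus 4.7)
The strategy is to identify $\mathrm{Inn}(N_{3}\otimes \Z_{(p)})$ explicitly inside $\Aut(N_{3}\otimes \Z_{(p)})$ via the isomorphism $\mu$ of Proposition \ref{7787}, and to extract all the assertions of the lemma from that identification. First, I would fix the isomorphism $\mathcal{L}_{3}\otimes \Z_{(p)} \cong \mathfrak{t}$ given by $[[a,b],c]\mapsto \nu(c,a\wedge b)$; this respects the Jacobi relation, since the cyclic sum $\nu(c,a\wedge b)+\nu(a,b\wedge c)+\nu(b,c\wedge a)$ vanishes by $\tfrac{2}{3}-\tfrac{1}{3}-\tfrac{1}{3}=0$ on each cyclically symmetric term. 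Under this identification, the bracketing $\mathcal{L}_{2}\otimes \Z_{(p)} \to \Hom(H_{(p)},\mathcal{L}_{3}\otimes \Z_{(p)})=\Hom(H_{(p)},\mathfrak{t})$ sends $e_{s}\wedge e_{t}$ to $e_{x}\mapsto \nu(e_{x},e_{s}\wedge e_{t})=\gamma_{s,t}(e_{x})$, which proves $R_{3}=\Im(\lambda_{3})$. Since this bracketing is $\Sp$-equivariant by Lemma \ref{ii92280} and injective (the centralizer of $H$ in the free Lie algebra is trivial), $R_{3}\cong \Lambda^{2}H_{(p)}\cong [1^{2}]\oplus [0]$ as an $\Sp$-module.

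Next I would compute the inclusion $\mathrm{Inn}(N_{3}\otimes \Z_{(p)}) \hookrightarrow \Aut(N_{3}\otimes \Z_{(p)})$ in $L_{3}$-coordinates. Since $Z(N_{3}\otimes \Z_{(p)})=\mathcal{L}_{3}\otimes \Z_{(p)}$, every inner automorphism is represented by some $(0,b,w)\in L_{3}$. Carrying out the conjugation $\mathrm{ad}(0,b,w)(\alpha,a,v)$ with the group law \eqref{777122} and matching the result against the action formula \eqref{7771222}, I expect to read off $A=\mathrm{id}$, $f$ as a $w$-dependent element of the $[1]$-summand of $\Hom(H_{(p)},\Lambda^{2}H_{(p)})$ of the form $v\mapsto \tfrac{1}{2}w\wedge v$, and $F$ as a $b$-dependent scalar multiple of $\gamma_{b}$, hence in $R_{3}$. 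This produces an injection $\mathrm{Inn}(N_{3}\otimes \Z_{(p)}) \hookrightarrow R_{3}\times [1]\times \{\mathrm{id}\}$; a rank count of $2g^{2}+g$ on both sides forces the injection to be an equality, establishing the last claim.

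The containment $\Upsilon_{f'}(f(\bullet))-\Upsilon_{f}(f'(\bullet))\in R_{3}$ for $f,f'$ in the $[1]$-summand is then immediate from the subgroup property of inner automorphisms: the product of two elements $(F,f,\mathrm{id}),(F',f',\mathrm{id})\in R_{3}\times [1]\times\{\mathrm{id}\}=\mathrm{Inn}(N_{3}\otimes \Z_{(p)})$ is again inner, so by \eqref{777} its first coordinate $F'+F+\Upsilon_{f'}(f(\bullet))-\Upsilon_{f}(f'(\bullet))$ lies in $R_{3}$, and cancelling $F,F'\in R_{3}$ yields the claim. The main obstacle will be the explicit calculation of $\mathrm{ad}(0,b,w)$ in the second step: the quadratic and cubic $\nu$-terms in \eqref{777122} create several contributions that have to cancel correctly (in particular the pieces $\nu(w,w\wedge v)$, $\nu(v,v\wedge w)$ and the $\Upsilon_{f}(w\wedge v)$ contribution to the action), but these cancellations are routine once the identifications above are fixed.
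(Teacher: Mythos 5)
Your proposal is correct, but the route is genuinely different from the paper's. The paper's proof is a single short direct computation: it checks that $\Upsilon_{f_t}(f_s(e_x))-\Upsilon_{f_s}(f_t(e_x))=\nu(e_x,e_s\wedge e_t)$ for the specific maps $f_s\colon e_i\mapsto e_i\wedge e_s$ (which span the $[1]$-summand), using the identity $\nu(a,b\wedge c)+\nu(b,c\wedge a)+\nu(c,a\wedge b)=0$; this simultaneously establishes $R_3=\Im(\lambda_3)$ and the containment, and the claim about $\mathrm{Inn}(N_3\otimes\Z_{(p)})$ is left as an immediate consequence of the diagram \eqref{seq48}. You instead invert the logical order: you compute the embedding $\mathrm{Inn}(N_3\otimes\Z_{(p)})\hookrightarrow\Aut(N_3\otimes\Z_{(p)})$ explicitly in $L_3$-coordinates by carrying out $\mathrm{ad}(0,b,w)$ against the action formula \eqref{7771222} (and indeed this comes out as you predict: $\mathrm{ad}(0,0,w)$ has $F=0$, $f(v)=\tfrac12 w\wedge v$, and $\mathrm{ad}(0,b,0)$ has $F(v)=-\tfrac12\nu(v,b)$, $f=0$), identify $\mathrm{Inn}$ with $R_3\times[1]\times\{\mathrm{id}\}$, and then read off the containment as the closure of $\mathrm{Inn}$ under the group law \eqref{777}. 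Your approach is more conceptual — it makes visible that the containment is exactly the statement that inner automorphisms form a subgroup — at the cost of a longer computation. One small imprecision: the phrase ``a rank count of $2g^2+g$ on both sides forces the injection to be an equality'' is not literally valid over $\Z_{(p)}$ for $p>0$ (an injection of free $\Z_{(p)}$-modules of equal rank need not be onto); what you actually need, and what your explicit formulas give you, is that the induced maps on the two graded pieces $\mathcal{L}_2\to R_3$ and $H\to[1]$ are isomorphisms, whence surjectivity follows by a filtration argument rather than by dimension.
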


\begin{proof}
By definition, $R_3$ is isomorphic to $\Lambda^2 H_{(p)} \cong [1^2]\oplus [0]$. For $s \leq 2g$, let $f_s$ be a linear map that sends $e_i$ to $e_i \wedge e_s$, as an element of $\Im ( \lambda_2 \otimes \Z_{(p)} ) $. Then, $ \Upsilon_{f_t} (f_s(e_x)) - \Upsilon_{f_s} (f_t(e_x))= \nu ( e_x, e_s \wedge e_t) $ by direct calculation. Therefore, the linear map $\lambda_2: e_x \mapsto [e_x, [e_s,e_t]]$ is equal to $ \Upsilon_{f_t} (f_s(\bullet)) - \Upsilon_{f_s} (f_t(\bullet))$, as required.
\end{proof}

\begin{exa}
[{The case of $g=1$}]\label{ex14k} Next, we have that the representation space $[1] \otimes [2, 1 ] $ is zero, and $\dim (H_{\Q } \otimes \mathcal{L}_3 ) =4, \ \dim (\Im ( \lambda_2) ) =1$. Hence, $\SpAut( N_3 \otimes \Z_{(p)})$ subject to $ \mathrm{Inn}(N_3 \otimes \Z_{(p)} ) $ is isomorphic to $ [2] \rtimes \Sp(H_{(p)})$. Notice that the kernel $\Qer([,])$ is $[2]$ since $\dim (\mathcal{L}_5 ) =3 $. To summarize, we conclude that
\begin{equation}\label{44} \SpOut(N_3 \otimes \Z_{(p)}) \cong [2] \rtimes \Sp(H_{(p)} ), \ \ \ \ \SpOut_0(N_3 \otimes \Z_{(p)}) \cong \Sp(H_{(p)} ). \notag \end{equation}

\end{exa}
\begin{exa}
[{The case of $g>1$}]\label{ex14k2} Meanwhile, when $g>1$, the kernel in \eqref{seq63} is known to be isomorphic to $[2^2]\oplus[1^2] \oplus [0] $ (see, e.g., \cite[page 33]{Sa}).
Therefore, we conclude that $\SpOut_0( N_3 \otimes \Z_{(p)}) $ is isomorphic to the group on $\bigl(\{ [2^2 ] \oplus [1^2] \oplus [0]\} \times \{ [1^3] \oplus [1] \} \bigr) \rtimes \Sp(H_{(p)})$.
\end{exa}

\subsection{Morita trace map and Enomoto-Satoh trace map}
\label{93123}
Let $p=0$. Morita \cite{Mo3} introduced a trace map from $ \Hom( H_{(0)}, \mathcal{L}_k )$, which gives some obstructions of the Johnson homomorphism. Satoh \cite{Sa1} defines a modified trace map from $ \Hom( H_{(0)}, \mathcal{L}_k )$, which has been studied from the viewpoint of representations as a joint work with Enomoto \cite{ES}. 

We will start by giving a brief review of the Enomoto-Satoh trace. Let $ \mathcal{L} $ be $\oplus_{k \geq 0 }\mathcal{L}_k \otimes \Q $, which is a Lie algebra induced by the bracketing. For each $k \geq 1$, let $\Phi_k: H^*_{(0)} \otimes H^{\otimes (k+1)}_{(0)} \ra H^{\otimes k}_{(0)}$ be the contraction map with respect to the first component, defined by
\[ x_i^* \otimes x_{j_1} \otimes \cdots \otimes x_{j_{k+1}} \longmapsto
x_i^*(x_{j_1}) \cdot x_{j_2} \otimes \cdots \otimes x_{j_{k+1}} . \]
Let $\iota_k : \mathcal{L}_k \otimes \Q \ra H^{\otimes k}_{(0)} $ be the natural map defined by $[X,Y] \mapsto X\otimes Y -Y \otimes X$, which gives rise to a Lie homomorphism $ \mathcal{L} \ra \oplus_{k \geq 0 }H^{\otimes k}_{(0)} $. This map is injective by the Poincar\'{e}-Birkoff-Witt theorem. Let $\mathcal{C}_{2g}(k)$ be the quotient $\Q$-vector space of $ H^{\otimes k}_{(0)}$ by the action of the cyclic group $\Z/k\Z$ on the components, that is,
\[\mathcal{C}_{2g} (k):= H^{\otimes k}_{(0)}/ \langle
a_1 \otimes a_2 \otimes \cdots \otimes a_k - a_2 \otimes a_3 \otimes \cdots \otimes a_k \otimes a_1 \ | \ a_i \in H_{(0)} \rangle . \]
Then, {\it the Enomoto-Satoh trace map} is defined to be the composite map
\[ \mathrm{Tr}_{k}^{ES} : H^*_{(0)} \otimes \mathcal{L}_{k+1} \xrightarrow{\id \otimes \iota_{k+1} } H^*_{(0)} \otimes H^{\otimes (k+1)}_{(0)} \stackrel{\Phi_k }{\lra} H^{\otimes k}_{(0)}
\stackrel{\mathrm{proj.}}{- \!\!\!\! \lra} \mathcal{C}_{2g}(k).\]
According to \cite[Section 7]{Sa}, the Morita trace map \cite{Mo3} can be summarized as the composite map $P_k \circ \mathrm{Tr}_{k}^{ES} : H^*_{(0)} \otimes \mathcal{L}_{k+1}\ra S^k H_{(0)} $, where $S^k H_{(0)}$ is the $k$-fold symmetric tensor product of $H_{(0)}$ and $P_k$ is the natural projection $\mathcal{C}_n(k) \ra S^k H_{(0)}$. We should remark that the trace maps are $\GL(H_{(0)}) $-equivariant by definition.

We present a lemma and construct the group homomorphism below \eqref{kkk28}.

\begin{lem}\label{lem4933}
Let $ k\geq 2.$ The composite $ \mathrm{Tr}_{k}^{ES} \circ ( \lambda_k \otimes \id_{\Q} ) $ is zero.
\end{lem}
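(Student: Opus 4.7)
My plan is a direct computation on a generic $b \in \mathcal{L}_k \otimes \Q$. Fix a basis $e_1, \dots, e_{2g}$ of $H_{(0)}$ with dual basis $e_1^*, \dots, e_{2g}^*$ and, under the identification $\Hom(H_{(0)}, \mathcal{L}_{k+1} \otimes \Q) \cong H_{(0)}^* \otimes (\mathcal{L}_{k+1} \otimes \Q)$, write
\[ \lambda_k(b) = \sum_{i=1}^{2g} e_i^* \otimes [b, e_i]. \]
Applying $\mathrm{id} \otimes \iota_{k+1}$ and using that $\iota$ is a Lie homomorphism into the tensor algebra, the bracket expands as $\iota_{k+1}([b, e_i]) = \iota_k(b) \otimes e_i - e_i \otimes \iota_k(b)$, which splits the image into two families of tensors on which I can run $\Phi_k$ separately.

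Next I compute the contraction $\Phi_k$ on each family. Writing $\iota_k(b) = \sum_J c_J\, e_{j_1} \otimes \cdots \otimes e_{j_k}$, the contraction $\sum_i\Phi_k(e_i^* \otimes \iota_k(b) \otimes e_i)$ forces $j_1 = i$ and produces $\sum_J c_J\, e_{j_2} \otimes \cdots \otimes e_{j_k} \otimes e_{j_1}$, i.e.\ the leftward cyclic shift $\sigma(\iota_k(b))$. On the second family, $\Phi_k(e_i^* \otimes e_i \otimes \iota_k(b)) = e_i^*(e_i)\, \iota_k(b)$, and summing over $i$ yields $2g \cdot \iota_k(b)$. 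Hence
\[ \Phi_k \bigl((\mathrm{id} \otimes \iota_{k+1})(\lambda_k(b))\bigr) = \sigma(\iota_k(b)) - 2g \cdot \iota_k(b), \]
and projecting to $\mathcal{C}_{2g}(k)$ identifies $\sigma$ with the identity, so the whole composite lands on $(1 - 2g)\, [\iota_k(b)] \in \mathcal{C}_{2g}(k)$.

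Finally, I argue that $[\iota_k(b)] = 0$ in $\mathcal{C}_{2g}(k)$ for every $b \in \mathcal{L}_k \otimes \Q$ with $k \geq 2$. Such a space is linearly generated by brackets $[a, c]$ with $a \in \mathcal{L}_p \otimes \Q$, $c \in \mathcal{L}_{k-p} \otimes \Q$, and $1 \leq p \leq k-1$, so it is enough to observe
\[ \iota_k([a, c]) = \iota_p(a) \otimes \iota_{k-p}(c) - \iota_{k-p}(c) \otimes \iota_p(a), \]
and the two length-$k$ tensors on the right-hand side are cyclic permutations of one another, hence equal in $\mathcal{C}_{2g}(k)$. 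This is precisely where the hypothesis $k \geq 2$ enters: we need to split $b$ as a genuine commutator of shorter Lie elements.

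The main obstacle I anticipate is just bookkeeping: the sign convention in the definition of $\lambda_k$, the identification $\Hom(H, -) \cong H^* \otimes (-)$, and the direction of the cyclic shift must be tracked consistently through the two contraction terms. Once these are pinned down the calculation is mechanical, and all the substantive input is absorbed by the standard identity $\iota([x, y]) = \iota(x)\iota(y) - \iota(y)\iota(x)$ in the tensor algebra together with cyclic invariance.
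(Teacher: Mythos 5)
Your proof is correct and takes a genuinely different, cleaner route than the paper's. Both arguments start from the expansion $\iota_{k+1}([b,e_i]) = \iota_k(b)\otimes e_i - e_i\otimes\iota_k(b)$ and apply the contraction $\Phi_k$, splitting the image into two families. The difference is in how the resulting two contraction terms are handled. You make the structural observation that the contraction $\sum_i \Phi_k(e_i^*\otimes\iota_k(b)\otimes e_i)$ is exactly the leftward cyclic shift $\sigma(\iota_k(b))$, which becomes the identity in $\mathcal{C}_{2g}(k)$; paired with the $2g\,\iota_k(b)$ contribution of the other family, the whole trace collapses to $(1-2g)[\iota_k(b)]$, and the remaining vanishing $[\iota_k(b)]=0$ for $k\geq 2$ is handled by writing $b$ as a sum of commutators $[a,c]$ and noting that $\iota_p(a)\otimes\iota_{k-p}(c)$ and $\iota_{k-p}(c)\otimes\iota_p(a)$ are cyclic permutations of one another. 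The paper instead fixes a basis of right-normed brackets $Y=[e_{j_1},[e_{j_2},[\cdots[e_{j_{k-1}},e_{j_k}]\cdots]]]$, asserts $\iota_k(Y)=0$ in $\mathcal{C}_{2g}(k)$ without argument, and then kills the second contraction family by a telescoping recursion tailored to the right-normed structure. What your version buys: it is basis-free, it bypasses the recursion entirely by recognizing the second term as the cyclic shift, and it actually justifies --- rather than merely asserts, as the paper does --- that $[\iota_k(b)]$ vanishes in the cyclic quotient.
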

\noindent
This lemma is proven at the end of this subsection. To conclude, we have a $\GL(H_{(0 )}) $-equivariant Lie homomorphism:
\[\bigoplus_{j \geq 2 }\mathrm{Tr}_{j}^{ES} : \Hom( H_{(0)}, \mathcal{L}_{j+1}) \lra \bigoplus_{j \geq 2} \mathcal{C}_{2g}(j) , \]
where the lie algebra structure of the image is trivial. Then, by the same discussion as \cite[Section 3]{Mo2} on the Lie group-algebra correspondence, we have the group homomorphism
\[ \bigoplus_{j: 2 \leq j <k} \widetilde{\mathrm{Tr}}_{j}^{ES} : I(N_k \otimes \Q) \rtimes \GL(H_{(0)}) = \Aut(N_{k} \otimes \Q) \lra \Bigl( \bigoplus_{j : 2 \leq j < k} \mathcal{C}_{2g}(j) \Bigr) \rtimes \GL(H_{(0)}). \]
Since $ \widetilde{\mathrm{Tr}}_{j}^{ES} \circ ( \id_{\Q} \otimes \lambda_j ) =0$ by construction and Lemma \ref{lem4933}, the direct sum induces the group homomorphism
\begin{equation}\label{kkk28}\bigoplus_{j: 2 \leq j <k} \widetilde{\mathrm{Tr}}_{j}^{ES} :\SpOut(N_{k} \otimes \Q) \lra \Bigl( \bigoplus_{j : 2 \leq j < k} \mathcal{C}_{2g}(j) \Bigr) \rtimes \Sp(H_{(0)}).\end{equation}

\begin{exa}
[{The case of $k=3$}]\label{exko} Notice that $\mathcal{C}_{2g}(2) \cong S^2 H_{(0)} \cong [2] $ by definition. Recall from Proposition \ref{7787} the isomorphism of $ \mathcal{L}_3 \cong \mathfrak{t}$. Then, if $g>1$, the ES trace $\Hom (H_{(0)}, \mathfrak{t}) \ra [2]$ is the projection on the first component of $[2]$, where we use the decomposition \eqref{ppo}. On the other hand, if $g=1$, the ES trace is the projection $ \Hom (H_{(0)}, \mathfrak{t})\cong [2] \oplus [0] \ra [2] $.
\end{exa}

\begin{proof}
[Proof of Lemma \ref{lem4933}] Denote $\iota_1(e_j)$ by $X_j$, and $[ e_{j_s}, [ e_{j_{s+1}}, [\cdots [ e_{j_{k-1}},e_{j_k} ] \cdots ]]]$ by $Y_s$. We can easily verify that $\mathcal{L}_k \otimes \Q$ is generated by elements of the form $[ e_{j_1}, [ e_{j_2}, [\cdots [ e_{j_{k-1}},e_{j_k} ] \cdots ]]]$ by induction on $k$. Therefore, it is enough to show $ \mathrm{Tr}_{k}^{ES} ( \sum_{i=1}^{2g} e_i^* \otimes [e_i, Y_k])=0. $ We have
\begin{equation}\label{778j}\Phi_k \circ ( \id \otimes \iota_{k+1})(e_i^* \otimes [e_i, Y]_k)=
2g \iota_{k}(Y_k)+ \sum_{i=1}^{2g}\bigl( \Phi_{k-1} ( X_i^* \otimes \iota_{k}( Y_k) \bigr) \otimes X_i ,
\end{equation}
by definition. Since $\iota_{k}(Y_k)=0 \in \mathcal{C}_{2g}(k) $ is clear, we will show that the second term vanishes. We will omit writing the symbol $\otimes.$ For any $s \leq k$, notice the equality,
\[ \sum_{i=1}^{2g}\Phi_{k-s+1} ( X_i^* \otimes \iota_{s}( Y_s )) X_{j_{s-1}} \cdots X_{j_{1}}X_i = \]
\[= \iota_{s+1}(Y_{s+1})X_{j_{s-1}}X_{j_{s-2}} \cdots X_{j_1} X_{j_s} -
\sum_{i=1}^{2g}\Phi_{k-s} ( X_i^* \otimes \iota_{s+1}( Y_{s+1}) )X_{j_s}\cdots X_{j_1} X_i \in H_{(0)}^{\otimes k}.\]
Therefore, by repeating this equality, the sum in \eqref{778j} is computed as
\begin{equation}\label{PPP}(-1)^{k}[X_{j_{k-1}} ,X_{j_k}] X_{j_{k-2}}\cdots X_{j_{1}}+
\sum_{s=1}^{k-2}(-1)^s
\iota_{s+1}( Y_{s+1})X_{j_{s-1}}\cdots X_{j_1}X_{j_s } . \end{equation}
Here, by induction on $m$ with $m < k-1$, we can immediately verify the following:
\[ \sum_{s=1}^{m}(-1)^s
\iota_{s+1}( Y_{s+1})X_{j_{s-1}}\cdots X_{j_1}X_{j_s }=(-1)^{m+1} \iota_{m+1}( Y_{m+1})X_{j_{m}}\cdots X_{j_1} \in \mathcal{C}_{2g}(k). \]
Then, \eqref{PPP} is zero, since $i_{k-1}(Y_{k-1})=[ X_{j_{k-1}} ,X_{j_k} ]$. Namely, the sum in \eqref{778j} is zero, as required.
\end{proof}

\section{Some meta-nilpotent invariants of knots}
\label{Seinv}
Here, we give some knot invariants from the viewpoints of the conjugacy classes of $\mathrm{SpOut}(F/F_k \otimes \Z_{(p)})$. We say that two knots are {\it $(k,p)$-equivalent} if the associated monodromies are conjugate in $\mathrm{SpOut}(F/F_k \otimes \Z_{(p)}).$ Throughout this section, we will denote the projection $\mathrm{SpOut}(F/F_k \otimes \Z_{(p)}) \ra \mathrm{SpOut}(F/F_1 \otimes \Z_{(p)})$ by $q_k.$ Then, by \eqref{seq3}, we canonically have a surjection:
\[\{ \textrm{knot}\}/\{(k+1,p) \textrm{-equivalent} \} \lra\!\!\!\!\! \ra \{ \textrm{knot}\}/\{(k ,p) \textrm{-equivalent} \} . \]
Thus, it is natural to discuss $\mathrm{SpOut}(F/F_k \otimes \Z_{(p)}) $ with $k$ in ascending order. Below, we examine the cases of $k=1,2,3$.

\subsection{The case of $k=1$; conjugacy classes of Sp$(H_{(0)})$ }
\label{ooi21}
We begin by analyzing the case of $k=1$, i.e., $\Sp(2g; \Z_{(p)})$. In general, it is hard to completely classify the conjugacy classes of $\Sp(2g; \Z_{(p)})$. However, it is not so hard to check whether two elements in $\Sp (2g; \Z_{(p)})$ are conjugate or not; furthermore, there are class functions of $\Sp (2g; \Z_{(p)})$. For example, there is the normal form theorem which uses the symplectic conjugate; see, e.g., \cite{Gutt}. In this subsection, to obtain knot invariants with $g=1$ as a special case, we will discuss the conjugacy classes of $\Sp(2; \Z_{(p)})$ with $p=0$ in detail.

\subsubsection{The case of $g=1$ and $p=0$}
\label{ooi22}
Let $\mathcal{O}(\Q^{\times }/ (\Q^{\times } )^2)$ be the set of all subsets of the quotient multiplicative group $ \Q^{\times }/ (\Q^{\times } )^2$. According to \cite{Ca}, we will construct a map $\mathrm{sgn}: \Sp(2; \Q) /\mathrm{conj.}\ra \mathcal{O}(\Q^{\times }/ (\Q^{\times } )^2)$.

We say that $A \in \Sp(2; \Q) $ is {\it decomposable} if the eigenvalues are distinct. Choose a decomposable $ A \in \Sp(2; \Q) $. When the eigenvalues $\lambda, \lambda^{-1} $ are contained in $\Q$, we define $ \mathrm{sgn}(A)= \Q^{\times }/ (\Q^{\times } )^2 $. Otherwise, let $E$ be the quadratic field extension $\Q( \lambda)$. Let $\bar{ }: E \ra E$ be the conjugation as the generator of the Galois group $\mathrm{Gal}(E/\Q) \cong \Z/2$. Let $v= \ ^t (x,y) \in E^2$ be an eigenvector with respect to $\lambda$. Then, by setting $\Xi = \left(\begin{smallmatrix}x & \bar{x} \\ y & \bar{y}\end{smallmatrix}\right) $, we have $ A = \Xi \left(\begin{smallmatrix}\lambda & 0 \\ 0 & 1/ \lambda\end{smallmatrix}\right) \Xi^{-1} $. Then, we define
\[ \mathrm{sgn}_{E/ \Q}(A) =\mathrm{det} (\Xi )/ (\lambda -\bar{\lambda}) \in E^{\times} \]
modulo the norm $N_{E/\Q}:= \{ c \bar{c}\in  E^{\times}  | c \in  E^{\times} \} $ and define
\[\mathrm{sgn}(A) := \text{the preimage in } \Q^{\times}/(\Q^{\times})^ 2 \text{ of }\mathrm{sgn}_{E/ \Q}(A). \]
Then, we can easily show $\mathrm{sgn}(A) = \mathrm{sgn}(gAg^{-1}) $ for any $g \in \Sp(2; \Q).$ It is known that the restriction of the pair of the trace map $\mathrm{Tr}$ and $\mathrm{sgn} $ on the sets of decomposable matrices is injective (see \cite[Page 8]{Ca}).
\begin{exa}
Ohkura \cite{Oh} gives a list of monodromies in $\Sp(2; \Q)$ of some knots, including 2-bridge knots of genus 1. For example, if $K=7_4$ and $K'=9_2$, the monodromies are $A= \left(\begin{smallmatrix}1 & 1/2 \\-1/2 & 3/4\end{smallmatrix}\right)$ and $A'= \left(\begin{smallmatrix}1 & 1/4 \\-1 & 3/4\end{smallmatrix}\right) $. We can easily check that they are not conjugate using sgn.
\end{exa}
Similarly, we can compute the signatures and distinguish some knots (e.g., $9_5$ and $13_2$). In our experience, the map sgn seems to be a strong invariant for knots of genus one.

\subsection{The case of $k=2$; Class function from $\SpOut( N_2 \otimes \Z_{(p)})$}
\label{ooi23}
Next, we address the case of $k=2$. Recall $ \SpOut(F/F_2 \otimes \Z_{(p)}) \cong \Sp(2; \Z_{(p)})$ if $g=1$; thus, we may suppose $g>1$. To obtain a class function of $ \SpOut(F/F_2 \otimes \Z_{(p)})$, we need the following lemma:

\begin{lem}\label{p3k}
Assume that $\SpOut(N_2\otimes \Z_{(p)})$ is identified with a semidirect product
$V\rtimes \Sp(H_{(p)})$ as in Proposition~\ref{312aa}.
Fix $g\in \Sp(H_{(p)})$ and let $
C(g):=\{\,h\in \Sp(H_{(p)})\mid hg=gh\,\}
$ be the centralizer of $g$.
Then for any $a,b\in V$ and any $h\in C(g)$ we have
\begin{equation}\label{3334} [(b,h)(a,g)(b,h)^{-1}=\bigl(h\cdot a+(1-hgh^{-1})b,\ hgh^{-1}\bigr) \in V \rtimes \Sp(2g; \Z_{(p)}).\end{equation} 
In particular, the image of $a$ in the coinvariants
\[
\bigl(V/(1-g)V\bigr)_{C(g)}
\]
depends only on the conjugacy class of $(a,g)$ among elements whose second component equals $g$.
\end{lem}
\begin{proof}We use the semidirect product convention
\[
(a,g)(a',g')=(a+g\cdot a',\, gg') ,\qquad (a,g)^{-1}=(-g^{-1}\cdot a,\, g^{-1}).
\]
A direct calculation gives the general conjugation formula \eqref{3334}. 
If $h\in C(g)$, then $hgh^{-1}=g$, which the right hand side turns out to 
$\bigl(h\cdot a+(1-g)b,\ g\bigr).$
Passing to the quotient $V/(1-g)V$ kills the term $(1-g)b$, and passing further to coinvariants
kills the remaining action of $C(g)$. This proves the claim.
\end{proof}
\begin{rem}
For a general $h\in \Sp(H_{(p)})$, conjugation sends $(a,g)$ to an element whose second component is
$hgh^{-1}$. Accordingly, the quotient $V/(1-g)V$ is naturally identified with $V/(1-hgh^{-1})V$
via $v\mapsto h\cdot v$. Under this identification, the class constructed above is invariant.
\end{rem}
\begin{exa}\label{p3k3}
Recall from Proposition \ref{tuki} that $\SpOut(F/F_2 \otimes \Z_{(p)}) \cong ([1^3] \oplus [1] \oplus [2,1]) \rtimes \Sp(2g; \Z_{(p)}) $. Using these isomorphisms to Lemma \ref{p3k}, we will compute some knot invariants.

For example, let us consider the knots $ K=8_{20}$ and $K'= 12n_{582}$, which are fibered of genus 2. The Alexander polynomial is $(1-t+t^2)^2 $. Then, we can verify that the monodromies $\tau$ and $\tau'$ in $\Sp(4;\Z)$ are conjugate, and that, if $p=0$ and $V= ([1^3] \oplus [1] \oplus [2,1])$, then $ V_{C(\tau)} \cong \Q^{} $. By carefully checking the elements $\Psi_2(\tau)$ and $\Psi_2(\tau' )$ in $V_{C(\tau)} $, it is not so hard to show that the monodromies $\tau$ and $\tau'$ in $V \rtimes \Sp(4;\Z)$ are not conjugate.
\end{exa}

\begin{rem}\label{iiiq}
If $p=0$ and zeros of the Alexander polynomial are distinct and not any root of unity, then $V_{C(\tau)} $ is isomorphic to zero in many cases, where $V= ([1^3] \oplus [1] \oplus [2,1])$ as above. For example, by considering the set
\begin{equation}\label{ppq}\mathcal{C}_{2g}:= \left\{ \ A \in \Sp( 2g;\Q) \ \Bigl|
\begin{array}{c}
\ \mathrm{The \ eigenvalues \ } \lambda_1,\dots, \lambda_{2g} \in \mathbb{C} \mathrm{ \ satisfy \ } \\ 
\ \lambda_{i}^{n_i} \neq \lambda_{j}^{n_j} \ \mathrm{ \ for \ any \ } i\neq j, 1 \leq n_i \leq 3 ,1 \leq n_j \leq 3
\end{array}
\right\}, \end{equation}
which is closed under conjugacy, we can see that, if $\tau \in p_2^{-1}( \mathcal{C}_{2g})$, then $V_{C(\tau)} \cong 0$. For example, if the crossing number of a knot $K \subset S^3$ is less than $12$, $K$ is hyperbolic, and $\Delta_K$ is irreducible, then $\tau \in \mathcal{C}_{2g}$. For knots with such Alexander polynomials, in order to obtain nontrivial knot invariants from $\SpOut(F/F_2 \otimes \Z_{(p)})$, we must assume $p > 0$.
\end{rem}

Finally, let us briefly compare the invariants with an extension of the first Johnson homomorphism \cite{Mo}. Let $ \mathcal{M}(2) \subset \mathcal{M}_{g,1} $ be the normal subgroup generated by all the Dehn twists along separating simple closed curves. Since $\mathcal{M}_{g,1} $ acts on $\pi_1( \Sigma_{g,1})=F$, we have a group homomorphism $\rho_k: \mathcal{M}_{g,1} \ra \Aut( F )\xrightarrow{\rm proj.}\Aut( F/F_k )$. Let $k=2$. Morita showed \cite[Theorem 4.8]{Mo} that $\rho_2$ induces an embedding $\mathcal{M}_{g,1} / \mathcal{M}(2) \hookrightarrow \frac{1}{2}\Lambda^3 H \rtimes \GL(H)$; furthermore, the restriction on the Torelli group is the first Johnson homomorphism and the image is of finite index. Thus, the difference between the invariants of fibered knots and non-fibered knots might be detected from finite information. For this reason, to obtain useful information from the invariants of a non-fibered knot, we shall suppose $p>0$.

\subsection{The case of $k=3$; Symplectic quadratic forms from $\SpOut ( N_3 \otimes \Z_{(p)})$}
\label{ooi25}
Using the description of $\SpOut ( N_3 \otimes \Z_{(p)})$ in Section \ref{9723}, we will construct knot invariants of symplectic quadratic forms. Although we can describe the invariants for every knot, the description will be quite complicated; for simplicity, we will focus on the set $\mathcal{C}_{2g} $ in \eqref{ppq} and suppose $p=0$, and construct a class function
\begin{equation}\label{ppq22}\rho: q_3^{-1}(\mathcal{C}_{2g} ) \ra \{ \textrm{isomorphism classes of symplectic quadratic forms}\}^{4g^2-3g }.
\end{equation}Let $\mathcal{D}$ be the diagonal subgroup of $\mathrm{Sp}(2g; \mathbb{C})$, which is a maximal commutative subgroup and is isomorphic to $(\mathbb{C}^\times)^g$.
For a representation $V$ of $\mathcal{D}$, we denote by $V^{\mathcal{D}}$ the $\mathcal{D}$-invariant subspace.
Then we have the following:

\begin{lem}
Recall the subspaces $ \mathfrak{t} \subset H_{(0)}\otimes \Lambda^2 H_{(0)}$ and $R_3 \cong [1^2]\oplus [0]$ defined in Lemma \ref{lem491}. The dimensions of the invariant parts $ \Hom(H_{\mathbb{C}}, \mathfrak{t}\otimes \mathbb{C} )^{\mathcal{D}}$ and of $([1^2]\oplus [0])^{\mathcal{D}}$ are $4g^2 - 2g$ and $g$, respectively. In particular, $\bigl( \Hom(H_{\mathbb{C}}, \mathfrak{t}\otimes \mathbb{C} ) /R_3\bigr)^{\mathcal{D}} $ is  of dimension $4g^2 - 3g $.
\end{lem}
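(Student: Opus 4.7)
The plan is to reduce both dimension statements to explicit weight-space counts in a maximal torus of $\Sp(H_{\mathbb{C}})$. I would fix a symplectic basis $e_1, f_1, \dots, e_g, f_g$ of $H_{\mathbb{C}}$ diagonalising $\mathcal{D}$, so that $e_i$ has weight $\epsilon_i$ and $f_i$ weight $-\epsilon_i$ with respect to a free basis $\epsilon_1, \dots, \epsilon_g$ of the character lattice of $\mathcal{D}$. The symplectic form gives an $\Sp$-isomorphism $H_{\mathbb{C}} \cong H_{\mathbb{C}}^*$, so $\Hom(H_{\mathbb{C}}, \mathfrak{t}\otimes\mathbb{C})^{\mathcal{D}} \cong (H_{\mathbb{C}} \otimes \mathfrak{t} \otimes \mathbb{C})^{\mathcal{D}}$, and the $\Sp$-decomposition $H \otimes \Lambda^2 H \cong \Lambda^3 H \oplus \mathfrak{t}$ from \eqref{777111} lets me replace the target computation by $\dim(H \otimes H \otimes \Lambda^2 H)^{\mathcal{D}} - \dim(H \otimes \Lambda^3 H)^{\mathcal{D}}$.

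Next, I would enumerate the weight-zero monomials of the form $v_1 \otimes v_2 \otimes (v_3 \wedge v_4)$ and $v_1 \otimes (v_2 \wedge v_3 \wedge v_4)$, each $v_a \in \{e_i, f_i\}$. For every index $m$, the weight-zero condition forces equally many $e_m$'s and $f_m$'s among the four factors, so each index occurs with even total multiplicity summing to four; only two multiset patterns arise, namely the single-index $\{e_m, e_m, f_m, f_m\}$ and the two-index $\{e_m, f_m, e_n, f_n\}$ with $m \neq n$. For $(H \otimes H \otimes \Lambda^2 H)^{\mathcal{D}}$, the single-index pattern contributes $2$ basis vectors per index $m$ (the wedge slot must contain the pair $\{e_m, f_m\}$, and the remaining two letters may be placed in either order in $(v_1, v_2)$), while the two-index pattern contributes $4!/2 = 12$ per unordered pair, yielding $2g + 12\binom{g}{2} = 6g^2 - 4g$. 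For $(H \otimes \Lambda^3 H)^{\mathcal{D}}$, the single-index pattern offers only two distinct letters and so cannot fill three wedge positions, while the two-index pattern gives $4$ vectors per pair (any of the four letters as $v_1$, the other three wedged into a unique canonical basis element), yielding $4\binom{g}{2} = 2g^2 - 2g$. Subtracting gives the first claimed dimension $4g^2 - 2g$.

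For $R_3$, I would use the identification $R_3 \cong [1^2] \oplus [0] \cong \Lambda^2 H$ from Lemma~\ref{lem491} to see that the weight-zero vectors of $R_3$ are precisely $e_i \wedge f_i$ for $1 \leq i \leq g$, so $\dim R_3^{\mathcal{D}} = g$. Finally, since $\mathcal{D}$ is a torus over $\mathbb{C}$ and hence reductive, the functor of $\mathcal{D}$-invariants is exact; applying it to the short exact sequence $0 \to R_3 \to \Hom(H, \mathfrak{t}) \to \Hom(H, \mathfrak{t})/R_3 \to 0$ gives the quotient dimension $(4g^2 - 2g) - g = 4g^2 - 3g$. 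I expect the only real care needed to be the bookkeeping for wedge antisymmetry: ruling out the single-index contribution to $\Lambda^3$ and discarding the orderings with $v_3 = v_4$ in the single-index $\Lambda^2$ case are the two places where a naïve count would go wrong.
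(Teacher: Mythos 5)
Your count is correct, and in fact the paper states this lemma with no proof at all, so there is nothing to compare against; your weight-space enumeration is the natural argument. The arithmetic checks out: $2g + 12\binom{g}{2} = 6g^2-4g$ for $(H^{\otimes 2}\otimes\Lambda^2 H)^{\mathcal{D}}$, $4\binom{g}{2}=2g^2-2g$ for $(H\otimes\Lambda^3 H)^{\mathcal{D}}$, difference $4g^2-2g$; then $(\Lambda^2 H)^{\mathcal{D}}$ of dimension $g$ and exactness of torus invariants give $4g^2-3g$. (A sanity check at $g=1$, where $\Hom(H,\mathfrak{t})\cong[0]\oplus[2]$ has zero-weight dimension $1+1=2=4g^2-2g$, confirms the formula.)

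One point you should make explicit rather than tacit: you take $\mathcal{D}$ to be diagonalized by a symplectic basis with paired weights $\epsilon_i$ on $e_i$ and $-\epsilon_i$ on $f_i$, i.e., you treat $\mathcal{D}$ as the diagonal maximal torus of $\Sp(2g;\mathbb{C})$, isomorphic to $(\mathbb{C}^{\times})^{g}$. The paper literally defines $\mathcal{D}$ as the diagonal subgroup of $\GL(2g;\mathbb{C})$, isomorphic to $(\mathbb{C}^{\times})^{2g}$; under that reading every weight of $\Hom(H_{\mathbb{C}},\mathfrak{t}\otimes\mathbb{C})\subset H^{*}\otimes H\otimes\Lambda^2 H$ is a combination $-\epsilon_i+\epsilon_j+\epsilon_k+\epsilon_l$ of $2g$ independent characters whose coefficients sum to $2\neq 0$, so all the invariant spaces in the lemma would be zero. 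Your reading is the only one consistent with the stated dimensions and with the later use of $P_A\in\Sp(2g;\mathbb{C})$ (for $A\in\mathcal{C}_{2g}$ the ambiguity in $P_A$ is precisely conjugation by diagonal symplectic matrices), so the paper's definition of $\mathcal{D}$ is evidently a slip — but since you are proving the lemma as stated, you must say which torus you are using and why.
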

\begin{proof} For the latter part, we have $([1^2]\oplus [0])^{\mathcal{D}} =(\Lambda H_{\mathbb{C}})^{\mathcal{D}}$,
which is the $g$-dimensional space spanned by $e_{2i-1} \wedge e_{2i}$ for $i \leq g$. On the other hand, $\mathfrak{t}\otimes \mathbb{C}$ is known to be spanned by the generating set
\[ \{ e_{i} \otimes (e_j \wedge e_k) - e_{j} \otimes (e_k \wedge e_i) \mid i<j, k \neq i, k \neq j \}
\cup \{ e_{i} \otimes (e_j \wedge e_i)  \mid  i \neq j \}  \]
Thus, the $\mathcal{D}$-invariant subspace of $\Hom(H_{\mathbb{C}}, \mathfrak{t}\otimes \mathbb{C} )^{\mathcal{D}}$ is of dimension $2g(2g-2) + 2g = 4g^2 - 2g$, as required. \end{proof}

For $A \in \mathcal{C}_{2g} \subset \Sp(2g;\Q), $ let $P_A \in \Sp( 2g;\mathbb{C})$ be a matrix such that $ P_A A P_A^{-1}$ is the diagonal matrix $(\lambda_1) \oplus (\lambda_2)\oplus\cdots \oplus (\lambda_{2g})$, and let $\pi: \Hom(H_{\mathbb{C}}, \mathfrak{t}\otimes \mathbb{C} ) \ra \Hom(H_{\mathbb{C}}, \mathfrak{t}\otimes \mathbb{C} /R_3)^{\mathcal{D}}$ be the projection from the direct summand. As mentioned in Remark \ref{iiiq}, 
\[1-A: \Hom( H_{(p)}, \Lambda^2 H_{(p)} ) \lra \Hom( H_{(p)}, \Lambda^2 H_{(p)} )\]
is an isomorphism. Then, we define a map, $\rho^{\rm pre}$, from the preimage $q_3^{-1}(\mathcal{C}_{2g} ) \subset \mathrm{SpOut}( N_3 \otimes \Z_{(p)} )$ by
\[\rho^{\rm pre}: q_3^{-1}(\mathcal{C}_{2g} ) \lra \bigl( \Hom(H_{\mathbb{C}}, \mathfrak{t}\otimes \mathbb{C} ) /R_3\bigr)^{\mathcal{D}}; \]
\[ (F,f,A) \longmapsto \pi \bigl(P_A \bigl(F(\bullet) -\Upsilon_f((1-A)^{-1} f(\bullet))+ \Upsilon_{(1-A)^{-1}f}(f (\bullet)) \bigr) \bigr). \]
It is not difficult to show that $\rho^{\rm pre} $ does not depend on the choice of $ P_A$, but $\rho^{\rm pre} $ is not invariant with respect to conjugacy.

To solve the non-invariance, consider the action of $\Sp(2g;\Q)$ on the sets of symplectic basis, $\mathrm{SB}_g$. Since this action is free and transitive, for $B \in \Sp(2g;\Q)$, we can uniquely choose the corresponding symplectic basis $\mu(B) $. Then, $ \rho^{\rm pre}(F,f, A)$ can be regarded as a map from $\mu(A)$ to $\bigl( \Hom(H_{\mathbb{C}}, \mathfrak{t}\otimes \mathbb{C} ) /R_3\bigr)^{\mathcal{D}} $; further, we can see that this map is a quadratic map by Proposition \ref{7787}. Therefore, we have
\[ \rho: q_3^{-1}(\mathcal{C}_{2g} ) \lra \mathrm{Map}( \mathrm{SB}_g , \bigl( \Hom(H_{\mathbb{C}}, \mathfrak{t}\otimes \mathbb{C} ) /R_3\bigr)^{\mathcal{D}}) ; (F,f,A) \longmapsto \bigl( \mu(A) \mapsto \rho^{\rm pre}(F,f,A) \ \bigr) . \]
Accordingly, it is not hard to verify that the symplectic congruence classes of such quadratic forms are invariant with respect to conjugacy. To summarize
\begin{prop}
The map \eqref{ppq22} is a class function of $\SpOut( F/F_3 \otimes \Z_{(p)})$.
\end{prop}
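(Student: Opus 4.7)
The plan is to separately verify two properties: (a) $\rho$ is well-defined on the outer group $\mathrm{SpOut}$, that is, it is unchanged when a representative $(F, f, A) \in \mathrm{SpAut}$ is modified by an inner automorphism; and (b) $\rho$ takes the same value on conjugate elements of $\mathrm{SpOut}$. The construction is engineered so that quotienting the target of $\rho^{\mathrm{pre}}$ by $R_3$ handles (a), and reparametrizing by symplectic bases in $\mathrm{SB}_g$ handles (b).

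For (a), by Lemma \ref{lem491} the inner subgroup corresponds to $R_3 \times [1] \times \{\id\}$. I would left-multiply $(F,f,A)$ by an arbitrary inner element $(F_0, f_0, \id)$ via the group law \eqref{777}, then substitute the resulting triple into the defining formula of $\rho^{\mathrm{pre}}$ and expand. The extra contributions split into three kinds: the summand $F_0 \in R_3$ itself; expressions of the form $\Upsilon_{\alpha}(\beta(\bullet))$ where $\alpha$ or $\beta$ lies in $\Im(\lambda_3)$, which sit inside $R_3$ by the second assertion of Lemma \ref{lem491}; and crossed terms between the $\Upsilon_{f}((1-A)^{-1} f)$ and $\Upsilon_{(1-A)^{-1}f}(f)$ blocks that cancel against each other. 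After applying $\pi$, all of these vanish, so $\rho^{\mathrm{pre}}$ descends to $\mathrm{SpOut}$.

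For (b), given $(F', f', A') \in \mathrm{SpAut}$, I would compute the conjugate $(F', f', A')(F, f, A)(F', f', A')^{-1}$ using \eqref{777} and the inverse formula in this group. The new $A$-coordinate is $A' A (A')^{-1}$; setting $P_{A' A (A')^{-1}} := P_A (A')^{-1}$ identifies the diagonal centralizer $\mathcal{D}$ for both matrices, and the identity $(A')^{-1}(1 - A' A (A')^{-1})^{-1} = (1 - A)^{-1}(A')^{-1}$ lets me rearrange the $(1-\cdot)^{-1}$ factors consistently. I would then check that, modulo $R_3$, the value of $\rho^{\mathrm{pre}}$ on the conjugated triple equals $\rho^{\mathrm{pre}}(F, f, A)$ precomposed with the $\Sp$-action of $A'$. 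Because $\mu(A' A (A')^{-1}) = A' \cdot \mu(A)$ in $\mathrm{SB}_g$, the resulting quadratic map $\mathrm{SB}_g \to (\Hom(H_{\mathbb{C}}, \mathfrak{t}\otimes\mathbb{C})/R_3)^{\mathcal{D}}$ assigned to the conjugated element is the pullback of the original one along $A'$, which is precisely a symplectic change of coordinates on the underlying quadratic form; the symplectic congruence class is therefore unchanged.

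The main obstacle will be the bookkeeping in step (b): the double multiplication produces $\Upsilon$-contributions from both sides, and verifying the required cancellations mod $R_3$ relies on repeated use of the $\Sp$-equivariance of the map $\nu$ in \eqref{888}, together with Lemma \ref{lem491}. Once these identifications are carried out, the fact that $\rho^{\mathrm{pre}}$ is quadratic in $(F,f)$—which is built into \eqref{777122} and \eqref{7771}—ensures that the output is genuinely an isomorphism class of symplectic quadratic forms, justifying the target $\{\text{sympl.\ quadratic forms}\}^{4g^2 - 3g}$ in \eqref{ppq22}.
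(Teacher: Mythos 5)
Your two-step plan — (a) check that the formula for $\rho^{\mathrm{pre}}$ is unaffected by multiplication by an inner automorphism, using the $R_3$-quotient, and (b) check conjugacy invariance by reparametrizing with symplectic bases and passing to congruence classes — is the same strategy the paper gestures at (the paper itself only says ``it is not hard to verify''), so you have the right architecture. However, step (a) as written contains a claim that is false and a citation that does not cover the case you need.

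You assert that ``expressions of the form $\Upsilon_{\alpha}(\beta(\bullet))$ where $\alpha$ or $\beta$ lies in $\Im(\lambda_3)$ sit inside $R_3$ by the second assertion of Lemma \ref{lem491}.'' Neither half of this is right. First, Lemma \ref{lem491} only asserts that the \emph{antisymmetrized} expression $\Upsilon_{f'}(f(\bullet)) - \Upsilon_{f}(f'(\bullet))$ lies in $R_3$, and only under the hypothesis that \emph{both} $f$ and $f'$ belong to $\Im(\lambda_3)$. Second, a single composite $\Upsilon_{f_0}(f(\bullet))$ with $f_0 = f_b \in \Im(\lambda_3)$ and $f$ arbitrary is in general \emph{not} an element of $R_3$: writing $f(e_x)=\sum u_i\wedge v_i$ gives $\Upsilon_{f_0}(f(e_x))=\sum\bigl(\nu(u_i,v_i\wedge b)-\nu(v_i,u_i\wedge b)\bigr)$, which is not of the form $e_x\mapsto\nu(e_x,\ell)$ for a fixed $\ell$.

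What actually saves the argument is that, after expanding $\rho^{\mathrm{pre}}\bigl((F_0,f_0,\mathrm{id})\cdot(F,f,A)\bigr)$ with the group law \eqref{777} and the linearity of $\Upsilon$ in both slots, the excess terms beyond $\rho^{\mathrm{pre}}(F,f,A)$ organize themselves precisely into \emph{antisymmetric} pairs $\Upsilon_{\alpha}(\beta)-\Upsilon_{\beta}(\alpha)$ in which at least one of $\alpha,\beta$ lies in $\Im(\lambda_3)$ (after noting that $(1-A)^{-1}$ preserves $\Im(\lambda_3)$, since that subspace is $\Sp$-invariant and $1-A$ restricts to an automorphism of it). To show these antisymmetric pairs land in $R_3$ one needs the Jacobi-type identity $\nu(u,v\wedge w)+\nu(v,w\wedge u)+\nu(w,u\wedge v)=0$ for the map $\nu$ in \eqref{888}: with $f_0=f_b$ one computes directly that $\Upsilon_{f_0}(f)-\Upsilon_{f}(f_0) = -\nu(\bullet,f(b))\in R_3$, and similarly for the other pairs. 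This strengthening of Lemma \ref{lem491} (one slot in $\Im(\lambda_3)$ suffices, provided you antisymmetrize) is what you must state and prove; it is not a consequence of the lemma as written. The claim that the ``crossed terms\ldots cancel against each other'' is also not what happens: they do not cancel, they individually collapse into $R_3$ in the manner above. Once step (a) is repaired along these lines, your outline of step (b) is consistent with the paper and should go through.
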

Although the construction of $\rho$ is a bit complicated, it is not hard to make a computer program to describe the map $\rho$, when $g=2,3$. In fact, we can check that the symplectic quadratic forms are not trivial; since symplectic quadratic forms over $\mathbb{C}$ are classified (see, e.g., \cite{BHSS}), we obtain non-trivial quantitative information from the resulting quadratic forms. However, the resulting computations are hard to describe; we will omit the details.

\appendix

\section{Proof of Proposition \ref{llk1}} 
\label{Ap2}
Here, we give a proof of Proposition \ref{llk1}. The proof uses the terminology developed in Sections \ref{Ss3}--\ref{93553}. We will often regard the complement $M \setminus K $ to be a 3-manifold obtained from an integral homology 3-sphere $M $ by removing an open tubular neighborhood of $K$.

As preparation, let us examine some fundamental groups.
Choose a Seifert surface $S$ of genus $g$, and fix a meridian $\mathfrak{m} \in \pi_1(M \setminus K)$.
Take a bicollar $S \times [-1,1]$ of $S$ such that $S \times {0} = S$.
Let $\iota_{\pm}: S \to M \setminus S$ be the embeddings whose images are $S \times {\pm 1}$. Take generating sets $W:=\{ u_1, \dots, u_{2g} \}$ of $\pi_1 S $ and $ X:=\{ x_1, \dots, x_{2g}, x_{2g+1}, \dots, x_{2g+k} \} $ of $ \pi_1(M \setminus S)$ for some $k$. Here, according to the discussion in \cite[Page 480]{Tr}, we can choose $X$ such that the linking number of the cycles represented by $u_i$ and $x_j$ is the Kronecker delta $\delta_{ij}$, while the remaining generators $x_{2g+1}, \dots, x_{2g+k} $ lie in $[\pi_1(M\setminus S),\pi_1(M \setminus S) ]$, and that $[ x_1, x_2] \cdots [x_{2g-1} , x_{2g}] $ represents a longitude in $\pi_1(M \setminus K) $. Set $y_i:=(\iota_+)_* (u_i)$ and $z_i=(\iota_-)_* (u_i)$; a van Kampen argument then yields a presentation of $ \pi_1(M \setminus K)$:
\begin{equation}\label{oo456} \langle \ \mathfrak{m} , x_1, \dots, x_{2g}, x_{2g+1}, \dots, x_{2g+k}\ | \ \mathfrak{m} y_i \mathfrak{m}^{-1}\ z_i^{-1} \ \ \ (1 \leq i \leq 2g) , r_{2g+1},\dots,r_{2g+t} \ \ \rangle, \end{equation}
for some relators $ r_{2g+1},\dots,r_{2g+t}$, which do not contain $ \mathfrak{m}$ and are contained in the commutator subgroup $[ \pi_1(M \setminus S), \pi_1(M \setminus S) ] $. Let $p: E_K^{\infty} \ra M \setminus K $ be the $\infty$-cyclic covering. Then, by using the Reidemeister-Schreier method, $ \pi_1(E_K^{\infty})= [\pi_K,\pi_K]$ is presented by
\begin{equation}\label{lklk} \langle \ x_1^{(n)}, \dots, x_{2g+k}^{(n)} \ \ \ ( n\in \Z
) \ | \ y_i^{(k)} (z_i^{(k+1)})^{-1} \ \ \ (1 \leq i \leq 2g, \ n \in \Z), r_{2g+1},\dots,r_{2g+t} \ \ \rangle, \end{equation}
and the injection $ p_*: \pi_1(E_K^{\infty}) \ra \pi_1(M \setminus K)$ is represented by the correspondence $ x_i^{(k)} \mapsto \mathfrak{m}^{-k} x_i \mathfrak{m}^k. $

\begin{proof}
[Proof of Proposition \ref{llk1}] We will use the above notation. Suppose that $(p, \lc \Delta_K )=1.$ Consider a lift, $\tilde{S}$, of the Seifert surface $S$ in $E_{K}^{\infty}$. Since $\mathfrak{l} = [x_1^{(0)}, x_2^{(0)}] \cdots [x_{2g-1}^{(0)} , x_{2g}^{(0)}] $ can be regarded as a boundary loop in $\partial \tilde{S} $ or a longitude of $K$, this $\mathfrak{l}$ commutes with the monodromy on $\pi_1( E_{K}^{\infty} )$. Define $F$ to be the free group with a basis $\{ x_{2i-1}^{(0)}, x_{2i}^{(0)} \}_{i=1}^g $. Then, the monodromy induces $\tau: F / F_k \otimes \Z_{(p)} \ra F / F_k \otimes \Z_{(p)}$, which preserves $ \iota_k( \mathfrak{l} )$.

The remaining part of the proof is to show that $\tau$ lies in $\mathrm{SpAut}(F / F_k \otimes \Z_{(p)} )$ for any $k \in \N$. Notice that, by Lemma \ref{pp5} below, the inclusion $i: \tilde{S} \hookrightarrow E_{K}^{\infty}$ induces an isomorphism
\[ i^* :H^*(E_{K}^{\infty}, \partial E_{K}^{\infty} ;\Z_{(p)} ) \lra H^*(\tilde{S}, \partial \tilde{S} ;\Z_{(p)} ). \]
Therefore, $\{ x_{2i-1}^{(0)}, x_{2i}^{(0)} \}_{i=1}^g $ represents a symplectic basis of $H^1(E_{K}^{\infty};\Z_{(p)} )$. Hence, by following the proof of Theorem \ref{bb1279}, we can show that $\tau$ lies in $\mathrm{SpAut}(F / F_k \otimes \Z_{(p)} )$, as required.
\end{proof}

\begin{lem}[{folklore, see, e.g., \cite{GK}}]]\label{pp5}
Assume that $K$ is $\Q$-homologically fibered of genus $g$, i.e.\ $2g=\deg\Delta_K$.
Moreover assume $(p,\lc(\Delta_K))=1$ (so that Lemma~\ref{kkk5} applies).
Then the inclusion $i:\tilde S\hookrightarrow E_K^\infty$ induces an isomorphism$
i_*:H_1(\tilde S;\Z_{(p)}) \xrightarrow{\cong} H_1(E_K^\infty;\Z_{(p)})$ over $ \Z_{(p)}$.
\end{lem}


\begin{proof}
It is enough to show the surjectivity of $i_*$, since
$H_1(E_K^\infty;\Z_{(p)})\cong \Z_{(p)}^{2g}$ by $2g=\deg\Delta_K$ and Lemma~\ref{kkk5}.

From \eqref{lklk}, since $x_{2g+1},\dots,x_{2g+k}$ lie in
$[\pi_1(M\setminus S),\pi_1(M\setminus S)]$, the group
$H_1(E_K^\infty;\Z_{(p)})$ admits a presentation with generators
$x_1^{(n)},\dots,x_{2g}^{(n)}$ ($n\in\Z$) and relations coming from
$y_i^{(n)}(z_i^{(n+1)})^{-1}$ ($1\le i\le 2g$, $n\in\Z$).

Let $\vec{x}^{(n)}:=([x_1^{(n)}],\dots,[x_{2g}^{(n)}])\in \Z_{(p)}^{2g}$.
Then the relations imply that there exist matrices $Y,Z\in\mathrm{Mat}(2g\times 2g;\Z)$ such that
$Y\vec{x}^{(n+1)}=Z\vec{x}^{(n)}$ for all $n$.

By the standard relationship between a Seifert matrix and the Alexander polynomial,
one has $\Delta_K(t)\doteq \det(Y-tZ)$ (up to multiplication by $\pm t^m$).
Since $\deg\Delta_K=2g$ and $(p,\lc(\Delta_K))=1$, the matrices $Y$ and $Z$
are invertible over $\Z_{(p)}$. Hence
$\vec{x}^{(n+1)}=Y^{-1}Z\,\vec{x}^{(n)}$, and thus $\vec{x}^{(n)}=(Y^{-1}Z)^n\vec{x}^{(0)}$.

Therefore $H_1(E_K^\infty;\Z_{(p)})$ is generated by $[x_1^{(0)}],\dots,[x_{2g}^{(0)}]$.
On the other hand, the image of $i_*$ contains $[y_1^{(0)}],\dots,[y_{2g}^{(0)}]$,
and in the above presentation we have $( [y_1^{(0)}],\dots,[y_{2g}^{(0)}])^T = Y\vec{x}^{(0)}$.
Since $Y$ is invertible over $\Z_{(p)}$, the generators $\vec{x}^{(0)}$ lie in $\mathrm{Im}(i_*)$.
This proves the surjectivity of $i_*$, hence $i_*$ is an isomorphism.
\end{proof}

\section{Proofs of Propositions \ref{llk145} and \ref{llk1456}}
\label{Ap3}

We give the proofs of Propositions \ref{llk145} and \ref{llk1456} from the viewpoints of homology cobordisms.

We review homology cobordisms \cite{GL}. 
A {\it homology cobordism} $(N,i^+, i^-)$ over $\Sigma_{g,1}$ consists of a compact
oriented 3-manifold $N$ with two embeddings $i^+, i^- : \Sigma_{g,1} \hookrightarrow \partial N$
such that:
\begin{enumerate}[(i)]
\item $i^+$ is orientation-preserving and $i^-$ is orientation-reversing,
\item $\partial M= i^+ (\Sigma_{g,1} ) \cup i^- (\Sigma_{g,1} ) $ and
$i^+ (\Sigma_{g,1} ) \cap i^- (\Sigma_{g,1} )=i^+ (\partial \Sigma_{g,1} ) = i^- (\partial \Sigma_{g,1} ) $,
\item $i^+ |_{\partial \Sigma_{g,1} } = i^- |_{\partial \Sigma_{g,1} }$, and
\item $i^+, i^- :H_*(\Sigma_{g,1};\Z) \ra H_*(M;\Z)$ are isomorphisms.
\end{enumerate}
Similarly, we can define {\it a rational homology cobordism} by replacing
(iv) with the condition that (iv') $i^+, i^-:H_*(\Sigma_{g,1};\Q) \ra H_*(M;\Q)$ are
isomorphisms. The maps $i^+, i^- $ are called {\it markings}.
The {\it closure}, $\mathrm{cl}(N,i^{\pm} )$, of $(N,i^{\pm} ) $ is defined as
a 3-manifold obtained by identifying $i^+(\Sigma_{g,1} ) $ and $i^- (\Sigma_{g,1} ) $.

Two homology cobordisms $(N,i^+, i^-)$ and $(N', j^+, j^-)$ over $ \Sigma_{g,1} $ are said to be
{\it isomorphic} if there exists an orientation-preserving diffeomorphism $f : N \cong N'$
satisfying $j^+ = f \circ i^+$ and $j^- = f\circ i^-$. We denote by $\mathcal{C}_{g,1}$ the set of all isomorphism classes of homology cobordisms over $\Sigma_{g,1} $. By using markings, we can endow $\mathcal{C}_{g,1}$ with
a monoid structure whose product is given by
\[(N,i^+, i^-) \cdot (N', j^+, j^-) = ( N \cup_{i^- \circ (j^+)^{-1}} N', i^+, j^- ).\]
Similarly, we can define the monoid, $\mathcal{C}_{g,1}^{\Q}$, of all isomorphism classes of rational homology cobordisms over $ \Sigma_{g,1} $, and an injection $ \mathcal{C}_{g,1} \hookrightarrow \mathcal{C}_{g,1}^{\Q} $.

According to \cite{GL}, we construct for every $k$ a homomorphism $\sigma_k: \mathcal{C}_{g,1} \ra \Aut_0(F/F_k)$.
Given $(N, i^+, i^- ) \in \mathcal{C}_{g,1}$ consider the homomorphisms $i^{\pm} _*
: F \ra \pi_1 (N)$, where the base-point is
taken in $\partial ( i^+ (\Sigma_{g,1})) = \partial ( i^-(\Sigma_{g,1}))$. Since $i^{\pm}$ are homology isomorphisms, Stallings theorem implies
that they induce isomorphisms $i^{\pm} _k: F/F_k \ra \pi_1(N)/\pi_1(N)_k$.
We then define $\sigma_k (N, i^{\pm}) = (i_k^- )^{-1} \circ ( i^+_k)$,
and can easily see that $\sigma_k (N, i^{\pm}) \in \Aut_0(F/F_k) $. 
The surjectivity of $\sigma_k$ is known (see \cite[Theorem 3]{GL}).
In a parallel way, we can rationally define a homomorphism $\sigma_k^{\Q}: \mathcal{C}_{g,1}^{\Q} \ra \Aut_0(F/F_k \otimes \Q)$.
Moreover, we can show the following by the construction of $\sigma_k.$
\begin{lem}\label{ss38} Given $(N, i^+, i^- ) \in \mathcal{C}_{g,1}$,
we suppose that $ \sigma_1 (N, i^{\pm}) - \mathrm{id}_{F/F_1}$ is an isomorphism, and regard $K$ as the boundary $\partial \Sigma_{g,1}$. 
Then, the closure $ \mathrm{cl}(N,i^{\pm} )$ can be regarded as a knot $K$ in an integral homology sphere, and the localized monodromy of $K$ is equal to $ \sigma_k (N, i^{\pm}) $.

Similarly, if $(N, i^+, i^- ) \in \mathcal{C}_{g,1}^\Q$ satisfies that
$ \sigma_1^{\Q} (N, i^{\pm}) - \mathrm{id}_{F/F_1 \otimes \Q}$ is an isomorphism,
then the closure $ \mathrm{cl}(N,i^{\pm} )$ can be regarded as a knot $K$ in a rational homology sphere, and the localized monodromy of $K$ is equal to $ \sigma_k (N, i^{\pm}) $.
\end{lem}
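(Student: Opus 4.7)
The plan is first to construct the closure $\hat{N} := \mathrm{cl}(N, i^{\pm})$ as a closed oriented 3-manifold, then to verify that $\hat{N}$ is an integral (resp. rational) homology 3-sphere, and finally to identify the localized monodromy of the associated knot with $\sigma_k(N, i^{\pm})$. By construction, gluing $i^+(\Sigma_{g,1})$ to $i^-(\Sigma_{g,1})$ via the identity of $\Sigma_{g,1}$ yields $\hat{N}$, in which the image $\Sigma$ of $\Sigma_{g,1}$ is a Seifert surface for the knot $K := i^+(\partial \Sigma_{g,1}) \subset \hat{N}$. Cutting $\hat{N}$ along $\Sigma$ recovers $N$, so the infinite cyclic cover $E_K^{\infty}$ of $\hat{N} \setminus K$ is identified with $\bigcup_{n \in \Z} N_n$ modulo the identifications $i^-(\Sigma_{g,1})$ in $N_n$ $\sim$ $i^+(\Sigma_{g,1})$ in $N_{n+1}$, where each $N_n$ is a copy of $N$ and the covering translation $t$ shifts $N_n$ to $N_{n+1}$.

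For the homology-sphere claim, a Mayer-Vietoris argument applied to the above decomposition, combined with the fact that $i^{\pm}_* : H_*(\Sigma_{g,1};\Z) \ra H_*(N;\Z)$ are isomorphisms, shows that the inclusion $H_1(N_0;\Z) \hookrightarrow H_1(E_K^{\infty};\Z)$ is an isomorphism; hence $H_1(E_K^{\infty};\Z) \cong \Z^{2g}$. Under this identification, the action of $t_*$ corresponds to $(i^+_*)^{-1} \circ i^-_*$, which is conjugate to $\sigma_1(N,i^{\pm})^{\pm 1}$. The Wang sequence for the $\Z$-cover $E_K^{\infty} \ra \hat{N} \setminus K$, together with the fact that killing the meridian in $H_1(\hat{N} \setminus K)$ leaves $\Coker(t_* - \id)$, yields $H_1(\hat{N};\Z) \cong \Coker(\sigma_1(N,i^{\pm}) - \id)$, which is zero by hypothesis. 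Poincar\'e duality for closed oriented 3-manifolds then forces $H_2(\hat{N};\Z)=0$, so $\hat{N}$ is an integral homology 3-sphere. The rational case is entirely parallel, with (iv$'$) replacing (iv).

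To identify the monodromy, note that $H_1(E_K^{\infty};\Z) \cong \Z^{2g}$ and the Rapaport-Crowell theorem (invoked in Proposition \ref{bb12}) imply $\lc \Delta_K = \pm 1$ and $\mathrm{deg}\,\Delta_K = 2g$; thus the localized monodromy is defined for every prime $p$. Following the proof of Theorem \ref{bb1279}, take $\phi := i^+_* : F = \pi_1(\Sigma_{g,1}) \ra \pi_1(N_0) \hookrightarrow \pi_1(E_K^{\infty}) = \mathcal{G}$. Since $i^+$ is a homology isomorphism, $\phi$ induces an isomorphism on $H_1(-;\Z_{(p)})$; compatibility of the intersection pairing on $\Sigma_{g,1}$ with the Milnor pairing on $H^1(E_K^{\infty};\Z_{(p)})$ makes this isomorphism symplectic. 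By Lemma \ref{kkk129}, $\phi$ then induces the Stallings isomorphism $\Psi_k: F/F_k \otimes \Z_{(p)} \stackrel{\sim}{\ra} (\mathcal{G}/\mathcal{G}_k) \otimes \Z_{(p)}$, and the localized monodromy $\tau$ is, by definition, the transfer of the covering-translation action through $\Psi_k$. Since $t$ sends $i^+_{0*}(\alpha)$ to $i^+_{1*}(\alpha) = i^-_{0*}(\alpha)$ in $\pi_1(E_K^{\infty})$, a diagram chase identifies $\tau$ with $(i^+_k)^{-1} \circ i^-_k$ modulo an inner automorphism induced by a change-of-basepoint path, and hence with $\sigma_k(N,i^{\pm})$ up to the orientation convention for the generator of the covering group $\Z$.

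The main obstacle will be verifying the compatibility of the intersection pairing on $H_1(\Sigma_{g,1};\Z_{(p)})$ with the Milnor pairing on $H_1(E_K^{\infty};\Z_{(p)})$ pulled back along $\phi$, since this symplectic property is what places $\tau$ inside $\mathrm{SpAut}(F/F_k \otimes \Z_{(p)})$; it is a folklore consequence of the identification of the Milnor pairing with (a limit of) Seifert pairings together with Lemma \ref{pp5}, but demands a careful check in this generality. A secondary care is needed to keep track of basepoints and the orientation of the covering action, so that $\tau$ is literally $\sigma_k(N,i^{\pm})$ rather than $\sigma_k(N,i^{\pm})^{-1}$; this is bookkeeping rather than a substantive difficulty.
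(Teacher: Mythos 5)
Your proof is correct, and it fills a real gap: the paper offers no argument for Lemma~\ref{ss38}, asserting only that it follows ``by the construction of $\sigma_k$''. Your verification is essentially the natural one and runs parallel to the paper's own proof of Proposition~\ref{llk1} in Appendix~\ref{Ap2}. The Mayer--Vietoris identification $H_1(N_0;\Z)\cong H_1(E_K^{\infty};\Z)\cong\Z^{2g}$, together with the Wang sequence giving $H_1(\mathrm{cl}(N,i^{\pm});\Z)\cong\Coker(\sigma_1(N,i^{\pm})-\id)$, is precisely the computation the author leaves implicit and it correctly yields both the homology-sphere claim and $\deg\Delta_K=2g$ with $\lc\Delta_K=\pm1$ (in the integral case; in the rational case only $p=0$ is available, which is what $\sigma_k^{\Q}$ requires). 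Taking $\phi=i^+_*$ and invoking Lemma~\ref{kkk129} then mirrors the choice of lift $\tilde S\hookrightarrow E_K^\infty$ in the proof of Proposition~\ref{llk1}, and the symplectic compatibility you flag as the main obstacle is handled exactly there via Lemma~\ref{pp5} and Poincar\'e duality for the pair $(E_K^\infty,\partial E_K^\infty)$ — your surface $\Sigma=i^\pm(\Sigma_{g,1})$ is such a lift, so the same argument applies verbatim. The residual $\sigma_k$-versus-$\sigma_k^{-1}$ ambiguity you note is indeed just the choice of generator for the covering $\Z$ (equivalently the sign of the meridian), and is resolved by fixing that convention; it does not affect conjugacy classes. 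In short, your argument is sound, and you have correctly isolated the two places where care is genuinely needed.
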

\begin{proof}[Proof of Proposition \ref{llk145}]
Let $\tau \in \Aut_0 (F/F_k)$ satisfy that
$ \mathrm{id}_{F/F_1}- q_k ( \tau) : F/F_1\ra F/F_1$ is an isomorphism.
Thanks to the surjectivity of $\sigma_k$, we have a homology cobordism $(N, i^{\pm})$ such that
$ \sigma_k (N, i^{\pm}) = \tau$. Hence, the closure $M \setminus K = \mathrm{cl}(N,i^{\pm} )$
satisfies the required conditions.
\end{proof}
Next, we turn to prove Proposition \ref{llk1456}.
For this, we need a lemma
\begin{lem}\label{lem:transvection_cobordism}
Let $\gamma$ be a simple closed curve in $\Sigma_{g,1}$ and let $s/t\in\Q$ with $\gcd(s,t)=1$ and $t\neq 0$.
Then there exists a rational homology cobordism $C_{\gamma,g,s/t}\in\mathcal{C}_{g,1}^{\Q}$ such that
$
\sigma_1^{\Q}(C_{\gamma,g,s/t})=[T_{\gamma}]_{s,t}.
$
\end{lem}
\begin{proof}
If $s=0$, then $[T_{\gamma}]_{s,t}=\mathrm{id}$ and we can take $C_{\gamma,g,0}:=\Sigma_{g,1}\times[0,1]$.
Hence we may assume $s\neq 0$.
Put $U:=\Sigma_{g,1}\times[0,1]$ and $K:=\gamma\times\{1/2\}\subset \mathrm{Int}(U)$.
Let $\mu,\lambda\in H_1(\partial N(K);\Z)$ be the meridian and the longitude coming from the surface framing.
Let $C_{\gamma,g,s/t}$ be the $3$--manifold obtained from $U$ by $(-t/s)$--Dehn surgery along $K$,
i.e.\ by attaching a solid torus so that the curve $t\mu-s\lambda$ bounds a disk.
Denote by $i^\pm:\Sigma_{g,1}\to \partial C_{\gamma,g,s/t}$ the inclusions of $\Sigma_{g,1}\times\{0\}$ and $\Sigma_{g,1}\times\{1\}$.
A Mayer--Vietoris argument shows that $i^\pm_*:H_*(\Sigma_{g,1};\Q)\to H_*(C_{\gamma,g,s/t};\Q)$ are isomorphisms, hence
$(C_{\gamma,g,s/t},i^+,i^-)\in \mathcal{C}_{g,1}^{\Q}$.

For $x\in H_1(\Sigma_{g,1};\Q)$, take a representative $\delta\subset \Sigma_{g,1}$ transverse to $\gamma$.
In $U\setminus \mathrm{Int}N(K)$, the cylinder $\delta\times[0,1]$ intersects the meridian $\mu$ algebraically $\langle x,[\gamma]\rangle$ times, and we obtain
\[
(i^+_*)(x)=(i^-_*)(x)+\langle x,[\gamma]\rangle\,\mu\in H_1(C_{\gamma,g,s/t};\Q).
\]
Since the surgery relation is $t\mu=s\lambda$ and $\lambda=(i^-_*)([\gamma])$, we have $\mu=(s/t)(i^-_*)([\gamma])$ in $H_1(C_{\gamma,g,s/t};\Q)$.
Therefore, we get tge required equality:
\[
\sigma_1^{\Q}(C_{\gamma,g,s/t})(x)=(i^-_*)^{-1}\circ(i^+_*)(x)
=x+(s/t)\langle x,[\gamma]\rangle[\gamma]=[T_{\gamma}]_{s,t}(x).
\]

\end{proof}

\begin{rem}
Nozaki \cite{Nozaki} constructs (in genus one) homologically fibered knots in lens spaces.
In particular, the complement of a Seifert surface gives rise to a rational homology cobordism over $\Sigma_{1,1}$; see \cite[Figure~2.1, Lemma~2.4]{Nozaki}.
We cite Nozaki's work as an illustration; the above proof gives a direct surgery construction realizing the prescribed transvection.
\end{rem}

\begin{proof}[Proof of Proposition \ref{llk1456}]
(1) Fix $\tau \in \Sp(2g;\Q) $ in assumption.
Since the symplectic group $\Sp(2g;\Q) $ is generated by transvections (see the introduction of \cite{Ch}), by Lemma \ref{lem:transvection_cobordism}, we can choose simple closed curves $\gamma_1, \dots, \gamma_n$ and integers $s_1,\dots, s_n , t_1,\dots, t_n$ such that $ q_k(\tau) =[T_{\gamma_1} ]_{s_1,t_1} \cdots [T_{\gamma_n} ]_{s_n,t_n}.$
We now define a homology cobordism $N_{\tau} $ by the composite
$ N_{\tau}:= C_{ \gamma_1 ,g,s_1/t_1} \cdots C_{ \gamma_n,g,s_n/t_n} $; the closure gives a knot of a rational homology sphere.

(2) Let $ K \subset M$ be a $\Q$-homologically fibered knot with a Seifert surface $S$ of genus $g$.
Then, the complement $M \setminus S$ can be regarded as a rational homology cobordism.
For $ \eta \in \mathrm{ISpAut}_0(F/F_k)$, the above surjectivity of $\sigma_k$ ensures a rational homology cobordism $C_{\eta}$ satisfying $\sigma_k(C_{\eta})= \eta $.
Therefore, the composite $(M \setminus S ) \cdot C_{\eta}$ takes a preimage satisfying the required condition.
\end{proof}

\subsection*{Acknowledgments}
The author sincerely expresses his gratitude to Tetsuya Ito, Yuta Nozaki, Takuya Sakasai and Takao Satoh for giving him valuable comments. 
He is sincerely thankful to the referee for the thorough reading of the manuscript and for providing many insightful comments and helpful suggestions.

\vskip 1pc

\normalsize

\noindent
Department of Mathematics, Tokyo Institute of Technology
2-12-1
Ookayama, Meguro-ku Tokyo 152-8551 Japan

\end{document}